\documentclass[pdflatex,sn-basic]{sn-jnl}

\usepackage{graphicx}%
\usepackage{amsmath,amssymb,mathtools}
\usepackage{physics}
\usepackage{amsthm}
\usepackage{hyperref}
\usepackage{cleveref}
\usepackage{bm}
\usepackage{tikz}
\usepackage{booktabs}
\usepackage[normalem]{ulem}
\usetikzlibrary{shapes.geometric, calc, fit, backgrounds, positioning}
\usepackage{cancel}
\usepackage{caption}
\usepackage{makecell}
\usepackage{siunitx}
\usepackage{arydshln}
\usepackage{enumitem}

\theoremstyle{thmstyleone}%
\numberwithin{equation}{section}

\newtheorem{example}{Example}[section]
\newtheorem{theorem}{Theorem}[section]
\newtheorem{proposition}{Proposition}[section]
\newtheorem{lemma}{Lemma}[section]
\newtheorem{definition}{Definition}[section]
\newtheorem{remark}{Remark}[section]
\newtheorem{corollary}{Corollary}[section]

\newcommand{\diag}{\mathrm{diag}}
\newcommand{\trans}{\intercal}
\newcommand{\R}{\mathbb{R}}
\newcommand{\C}{\mathbb{C}}
\newcommand{\Z}{\mathbb{Z}}
\newcommand{\DomX}{\mathcal{X}}

\newcommand{\constS}{\Sigma}
\newcommand{\Sym}[1]{\operatorname{Sym}(#1)}
\newcommand{\Skew}[1]{\operatorname{Skew}(#1)}
\newcommand{\Ug}[1]{\mathsf U(#1)}

\newcommand{\SymHN}{\Sym{\size}}
\newcommand{\SkewHN}{\Skew{\size}}

\newcommand{\OO}{\mathsf O}
\newcommand{\ON}{\OO(\size)}

\newcommand{\map}{\mathfrak P}
\newcommand{\cod}{\mathcal Y}
\newcommand{\xx}{{\bm x}}
\newcommand{\yy}{\bm y}
\newcommand{\QO}{Q_0}
\newcommand{\graph}{G}
\newcommand{\tree}{T}
\newcommand{\graphA}{\graph(A)}
\newcommand{\Aset}{\{A^\mu\}_{\mu=1}^\dof}
\newcommand{\tildeAset}{\{\tilde{A}^\mu\}_{\mu=1}^\dof}
\newcommand{\edge}{E}
\newcommand{\edgepair}[2]{\{#1, #2\}}
\newcommand{\Verti}{V}
\newcommand{\bgamma}{\bm{\gamma}}
\newcommand{\dof}{M}
\newcommand{\size}{N}
\newcommand{\calA}{\mathcal{A}}

\newcommand{\eframe}{{\bm u}}

\newcommand{\setspec}{\texttt{spec}}
\newcommand{\setpp}{\texttt{+2p-loops}}
\newcommand{\settwo}{\texttt{+2-loops}}
\newcommand{\setfc}{\texttt{+fund.cyc.}}

\newcommand{\Fset}{\mathcal F}

\newcommand{\rmse}{\mathrm{RMSE}}
\newcommand{\dist}{\mathrm{dist}}

\newcommand{\smalltimes}{\mathbin{\scalebox{0.7}{$\!\times\!$}}}
\begin{document}


\title{Stable Recovery of Matrix Gauge Classes \\ 
        from Pointwise Invariants}

\author[1]{\fnm{Dexuan} \sur{Zhou}}\email{zhoudexuan@mail.bnu.edu.cn}

\author[1]{\fnm{Huajie} \sur{Chen}}\email{chen.huajie@bnu.edu.cn}

\author[2]{\fnm{Bernie} \sur{Hsu}}\email{berniehsu@math.ubc.ca}

\author[2]{\fnm{Christoph} \sur{Ortner}}\email{ortner@math.ubc.ca}

\affil*[1]{%
  \orgdiv{School of Mathematical Sciences},
  \orgname{Beijing Normal University},
  \orgaddress{\city{Beijing}, \postcode{100875}, \country{China}}%
}

\affil[2]{%
  \orgdiv{Department of Mathematics},
  \orgname{University of British Columbia},
  \orgaddress{\city{Vancouver}, \postcode{V6T 1Z2}, 
  \state{BC}, 
  \country{Canada}}%
}

\abstract{
A parameterized matrix family $x\mapsto H(x)$ on a configuration domain is determined by its physical content only up to a constant orthogonal change of basis. This gauge ambiguity is intrinsic to data-driven Hamiltonian models, such as tight-binding parameterizations, reduced-order electronic structure methods, or excited-state models. It raises a basic inverse problem: what observations of $H(x)$ suffice to identify the family up to this gauge? The pointwise spectrum is incomplete already for linear families on $\mathbb{R}$. Here, we prove that, under natural non-degeneracy and connectivity assumptions, augmenting the spectrum with loop products of the coupling matrices in the instantaneous eigenframe yields a complete invariant and that inversion is stable. We support the theory with numerical experiments.
}

\keywords{Configuration-to-Hamiltonian mapping; Gauge equivalence; Complete invariants; Inverse spectral problems; loop product}

\pacs[MSC Classification]{15A29; 15A18; 15A21; 05C22; 05C50}


\maketitle

\section{Introduction}
\label{sec:introduction}
Matrix-valued families $H(\xx)$ arise throughout physics and molecular modelling whenever an effective operator, represented by an $\size \times \size$ matrix, depends on continuous parameters $\xx \in \R^\dof$. Canonical examples, and our main motivation, are discretized or reduced-order electronic structure models where $\xx$ describes a configuration of a molecule or material and $H(\xx)$ is the Hamiltonian in a chosen basis: its eigenvalues are the orbital or band energies, and its eigenvectors the corresponding states~\citep{martin2020electronic}.

Increasingly, such operators are not constructed from first principles but learned from data. This is by now a standard task in scientific machine learning: neural-network parameterizations are fitted to {\it ab initio} datasets in Hamiltonian learning \citep{li2022deep, gong2023general, qian2026equivariant, zhang2022equivariant}, pseudopotential construction \citep{lin2025deep,cances2016existence}, excited-state modelling \citep{axelrod2022excited,barrett2025transferable}, and the construction of friction operators for coarse-grained or nonadiabatic dynamics \citep{sachs2025machine,zhang2020symmetry}.

A matrix $H(\xx)$ is the representation of an operator in a basis, and the basis itself carries no physical content. A global change of basis, applied uniformly in $\xx$,
\begin{equation*}
    H(\xx)\mapsto \QO^{\trans}H(\xx)\QO,
    \qquad \QO\in\ON,
\end{equation*}
alters every entry of $H(\xx)$ while leaving every downstream prediction of interest --- spectra, densities, forces, response functions --- unchanged. 
Most of the approaches cited above nevertheless fix a basis at the outset and fit the model by pointwise matching of matrix entries. The object recovered is then one representative rather than the operator family itself, and the loss is not invariant under a change of basis: part of the residual measures the basis rather than the model \citep{wang2024universal}. 

We take up the opposite question. Can an operator family be learned from basis-independent observables alone?
In practice this would decouple the basis in which the training data are generated from the basis in which the surrogate model is fitted: for example in Hamiltonian learning the data could be produced by a plane-wave DFT code, while the surrogate model can be expressed in an atomic-orbital-inspired basis resulting in a size extensive model.

An affirmative answer requires basis-independent quantities that are rich enough to determine the family up to a global change of basis (a gauge). The pointwise spectrum is the obvious candidate, but it is not sufficient: two families can be pointwise isospectral throughout the parameter domain without being related by any global change of basis \citep{bradlyn2017topological,vanderbilt2018berry}. In the experiments of \Cref{sec:examples}, fitting against incomplete invariant sets drives the training residual to the numerical floor while the recovered family stays at equivalence distance $\mathcal{O}(1)$ from the target class.
Classical inverse spectral theory resolves a related non-uniqueness by augmenting the spectrum with auxiliary data such as a second spectrum, norming constants, or Weyl-type functions \citep{freiling2001inverse, yurko2006inverse}. These constructions are tailored to differential operators, are not gauge invariant, and do not transfer to parameterized matrix families.

Complete invariants for this equivalence do exist: the continuous trace words~\citep{procesi1976invariant}
\begin{equation*}
    T_k(\xx_1,\ldots,\xx_k;H) := 
    \tr\bigl(H(\xx_1)H(\xx_2)\cdots H(\xx_k)\bigr)
\end{equation*}
determine the gauge class. 
The trace words are unwieldy for numerical purposes for three reasons: (i) they are multi-point and non-local, since $T_k$ couples $k$ distinct configurations, so evaluating the $k$-point words on $s$ sampled configurations costs $\mathcal{O}(s^k)$. (ii) their number is not controlled, in that no finite subset is singled out. (iii) no stability estimate is available; nothing bounds the distance between gauge classes in terms of the discrepancy between trace words, so agreement to within a tolerance carries no guarantee.



For numerical purposes, a complete invariant should be evaluable at controlled cost and admit a stability estimate. With this motivation, we make three contributions:
First, in the spirit of loop-product methods for graph-gauge problems \citep{korotyaev2017magnetic}, we construct an invariant vector field $\map(\xx; H)$, which can be thought of as a {\em pointwise} analogue of trace words. We prove that it is a complete invariant under a non-degeneracy assumption and a connectivity assumption on the interaction graph (\Cref{thm:complete_invariant_new}), which we argue are generically satisfied (\Cref{subsec:exceptional_sets}). Second, we reduce this invariant to a polynomially sized subset that remains complete. The reduced invariant records $\mathcal{O}(\dof\size^2)$ scalars per configuration, its evaluation is linear in the number of sampled configurations, and it shrinks when few pairs of states interact. Third, we prove that the reduced invariant is Lipschitz stable (\Cref{thm:full_stability}): informally, on a compact and path-connected set $K$, 
\begin{equation*}
    \min_{\QO \in \ON} \max_{\xx \in K}
    \bigl\| H_1(\xx) - \QO^\trans H_2(\xx) \QO \bigr\|
    \le C \max_{\xx \in K} \| \map(\xx; H_1) - \map(\xx; H_2) \|_\infty,
\end{equation*}
so that recovery of the gauge class is well-posed and not merely injective. We also show that the construction is sharp, in that each component of the invariant and each structural assumption is necessary.

Identifiability up to a structural ambiguity is a recurring theme in inverse problems. In the anisotropic Calder\'on problem, the conductivity tensor is determined by boundary measurements only up to a boundary-fixing diffeomorphism \citep{uhlmann2009electrical}; in state-space realization theory, a minimal linear system is determined by its input-output map only up to a change of state coordinates \citep{kalman1963mathematical}; for the magnetic Schr\"odinger operator, the magnetic potential is determined only up to a gauge transformation \citep{krupchyk2014uniqueness}. The present setting is of the same type: the admissible non-uniqueness is a global orthogonal change of basis.

\vskip 0.2cm

\noindent
{\bf Outline.}
\Cref{sec:problem_setting} formalizes gauge equivalence, introduces the derivative couplings, interaction graphs and loop products, and states the completeness theorem.
\Cref{sec:examples} first establishes sharpness: each invariant component and each hypothesis is necessary, as we show through counterexamples and optimization experiments. It then develops refinements and extensions, including the Lipschitz stability estimate, and the relaxation to removable exceptional sets.
%
\Cref{sec:conclusion} summarizes the contributions and collects open problems, and \Cref{sec:related_work} contrasts our framework with related concepts in the literature, including the continuous trace words discussed above.
\Cref{app:proof_reduction} gives the proof of \Cref{thm:full_stability}.

\vskip 0.2cm

\noindent
{\bf Notation.} 
Throughout the paper, $\size\in\Z_+$ denotes the dimension of the matrices under consideration, and $\dof\in\Z_+$ denotes the dimension of the configuration space. 
Let $\DomX\subset\R^\dof$ be a configuration domain, by which we mean a connected open subset of $\R^\dof$. 
We write $\SymHN$ for the space of real-symmetric $\size\times \size$ matrices, $\SkewHN$ for the space of real skew-symmetric $\size\times \size$ matrices, and $\ON$ for the group of real orthogonal $\size\times \size$ matrices.
For $A\in\SymHN$, we write $\lambda_1(A)\le\cdots\le\lambda_\size(A)$ for its
eigenvalues arranged in non-decreasing order; we call $A$ non-degenerate when its eigenvalues are distinct, $\lambda_1(A)<\cdots<\lambda_\size(A)$. 
A family $A(\xx), \xx \in \DomX$, is called non-degenerate if all $A(\xx)$ are non-degenerate. 
We denote by $A^\trans$ the transpose of $A$, and by $A^\dagger$ its conjugate transpose.
The symbol $\langle\cdot,\cdot\rangle$ denotes the standard Euclidean inner product on $\R^\size$, and $\partial_{x_\mu}$ the partial derivative in the configuration coordinate $x_\mu$. 
When $\dof=1$, we write the (scalar) configuration variable as $x$.

\section{Problem Setting and Main Result}
\label{sec:problem_setting}

\subsection{Gauge equivalence and invariants}
\label{sec:def:map}
Let $\DomX \subset \R^{\dof}$ be open. We consider families $H : \DomX \to \SymHN$ of real-symmetric matrices, taking $C^1\bigl(\DomX, \SymHN\bigr)$ as the default admissible class and occasionally imposing other regularity. (The complex Hermitian case under the unitary gauge will be treated in a separate work; see also \Cref{sec:conclusion}.)
In the learning context of \Cref{sec:introduction}, $\size$ is the number of basis functions of the model and $\xx \in \DomX$ the physical configuration it depends on.
Restricting the changes of basis to global orthogonal ones leads to the following equivalence relation.

\begin{definition}[Gauge equivalence]
\label{def:gauge_equivalence}
Let $H_1, H_2 \in C^1(\DomX, \SymHN)$. We say that $H_1$ and $H_2$ are \emph{gauge equivalent}, written as $H_1 \sim H_2$, if there exists 
$\QO \in \ON$ such that
\begin{equation*}
    H_2(\xx) = \QO^{\trans} H_1(\xx) \QO
\qquad \forall\, \xx \in \DomX.
\end{equation*}
\end{definition}


We study the relation through quantities that are constant on equivalence classes. We therefore consider maps 
$\map : C^1(\DomX, \SymHN) \to \cod$ into a feature space $\cod$, and ask when they are rich enough to distinguish equivalence classes.

\begin{definition}[Invariant and complete invariant]
\label{def:invariant_map}
A map $\map : C^1(\DomX, \SymHN) \to \cod$ is called an \emph{invariant} if
\begin{equation*}
\label{map:invar}
    H_1 \sim H_2 \quad\Longrightarrow\quad \map(H_1) = \map(H_2) ,
\end{equation*}
and a \emph{complete invariant} if, in addition,
\begin{equation*}
\label{map:com:invar}
    \map(H_1) = \map(H_2) \quad\Longrightarrow\quad H_1 \sim H_2 .
\end{equation*}
\end{definition}


The goal of this paper is to construct a complete invariant that is convenient for numerical purposes: pointwise, moderate computational cost and stable inversion.
The most natural starting point is the pointwise spectrum, which is incomplete. 
If two matrix families $H_1, H_2$ are isospectral, then they are related by a configuration-dependent gauge $Q(\xx)$, but not necessarily by a constant one.
For example, if 
\begin{equation}
\label{eq:spectrum_incomplete}
    \DomX = \R,  \quad H_1 = \diag(\lambda_1, \lambda_2),
    \quad H_2 = R(x)^\trans H_1\, R(x),
\end{equation}
with $R(x)$ a rotation through angle $x$ and $\lambda_1 < \lambda_2$, a straightforward calculation shows that no constant gauge can exist. The remainder of the paper introduces invariants that resolve this deficit.

\subsection{Berry connection and derivative coupling}
\label{sec:berry}
The example \eqref{eq:spectrum_incomplete} highlights what the spectrum misses: it determines $H(\xx)$ at each fixed $\xx$ up to an orthogonal frame, but cannot determine how those frames are connected by a single, configuration-independent rotation. 
To recover the gauge class we need to track how the eigenframe of $H(\xx)$ rotates as $\xx$
varies. This is encoded by the \emph{derivative coupling}.

Assume that $H$ is non-degenerate and $U \subset \DomX$ is a simply connected neighbourhood. Then there exists a $C^1$ orthonormal ordered eigenframe \citep{Kato1995}, i.e., 
\begin{equation*}
\label{eq:berry_eigenframe}
    \eframe=(u_1,\dots,u_\size)\in C^1(U,\ON),
    \qquad
    \eframe^\trans H\,\eframe = \diag\bigl(\lambda_1,\dots,\lambda_\size\bigr), 
\end{equation*}
where $\lambda_i \in C^1(U)$ and $\lambda_1 < \dots < \lambda_\size$ in $U$. 
The \emph{derivative coupling matrices} of $H$ associated with the eigenframe $\eframe$ are
\begin{equation*}
    \label{eq:berry_def}
    \calA^\mu(\xx;H) := \eframe(\xx)^\trans\,
    \partial_{x_\mu}\eframe(\xx)\in\SkewHN, \qquad \xx \in U,
    \quad \mu = 1, \dots, \dof, 
\end{equation*}
with off-diagonal entries
\begin{equation}
    \label{eq:berry_entries}
    \calA^\mu_{mn}(\xx;H)
    = \bigl\langle u_m(\xx),\,\partial_{x_\mu}u_n(\xx)\bigr\rangle
    = \frac{\bigl\langle u_m(\xx),\,\partial_{x_\mu}H(\xx)\,u_n(\xx)\bigr\rangle}{\lambda_n(\xx)-\lambda_m(\xx)},
    \qquad m\neq n.
\end{equation}
They appear as the derivative couplings in
non-adiabatic molecular dynamics \citep{yarkony1996diabolical, subotnik2016understanding}.
On the simply connected $U$ the eigenframe is $C^1$, so $\calA^\mu$ is continuous; in this regular form it is the (non-Abelian) \emph{Berry connection} \citep{vanderbilt2018berry}, and the gauge-rigidity computation below relies on this regularity.

Together with the spectrum, the derivative coupling identifies the gauge class within $U$: Let $H_1, H_2$ be isospectral and non-degenerate with ordered eigenframes $\eframe_i \in C^1(U, \ON)$ such that
    $\eframe_1^\trans H_1 \eframe_1 = \Lambda = \eframe_2^\trans H_2 \eframe_2$. 
The candidate for a constant gauge is $Q = \eframe_1 \eframe_2^\trans$.
Suppose the derivative couplings agree, writing $\calA^\mu := \calA^\mu(\cdot;H_1) = \calA^\mu(\cdot;H_2)$;
then
\begin{equation}
\label{eq:berry_rigidity_calc}
    \begin{aligned}
        \partial_{x_\mu}Q
    &=
    (\partial_{x_\mu}\eframe_1)\eframe_2^\trans + \eframe_1(\partial_{x_\mu}\eframe_2)^\trans
    \\
    &= \eframe_1\calA^\mu \eframe_2^\trans + \eframe_1(\calA^\mu)^\trans \eframe_2^\trans
    \\
    &= \eframe_1\bigl(\calA^\mu+(\calA^\mu)^\trans\bigr)\eframe_2^\trans
    = 0,
    \end{aligned}
\end{equation}
since $\calA^\mu$ is skew. Therefore $Q$ is constant and hence $H_1 = Q H_2 Q^\trans$ in $U$.


Two structural difficulties arise, together with a practical one.
First, $\calA$ is not an invariant. The eigenframe of a single $H(\xx)$ is fixed only up to right multiplication $\eframe \mapsto \eframe S$ by an element of the \emph{sign gauge}
\begin{equation*}
    \label{eq:berry_sign_group}
    \constS := \bigl\{\diag(\sigma_1,\dots,\sigma_\size):~\sigma_k\in\{\pm1\}\bigr\},
\end{equation*}
under which the coupling transforms as
\begin{equation}
    \label{eq:berry_sign_action}
    \calA^\mu \mapsto S \calA^\mu S, \qquad \text{i.e.,} \qquad
    \calA^\mu_{mn} \mapsto \sigma_m\sigma_n \calA^\mu_{mn}.
\end{equation}

Second, when $\DomX$ is not simply connected, a global $C^1$ eigenframe need not exist at all: the eigenvector lines may form non-orientable bundles, and the global-frame construction above no longer applies. This is a simplified version of the gauge ambiguities of the adiabatic representation \citep{yarkony1996diabolical}.

Compounding both, a numerical eigensolver assigns these signs arbitrarily, and need not do so consistently or continuously as $\xx$ varies, even on a simply connected domain.
This motivates the following relaxed definition, which is consistent with numerical practice. 

\begin{definition}[Pointwise derivative coupling]\label{def:ptwise_derivative_coupling}
    Let $H \in C^1(\DomX, \SymHN)$. 
    A family $\calA : \DomX \to \SkewHN^{\dof}$ is a {\em pointwise derivative coupling} of $H$ if, for each $\xx \in \DomX$, there exists an ordered $C^1$ eigenframe $\eframe$ in a neighbourhood of $\xx$ such that $\calA^\mu(\xx;H) = \eframe(\xx)^\trans \partial_{x_\mu} \eframe(\xx)$. 
\end{definition}

At each $\xx$ the possible choices of pointwise derivative couplings of $H$ differ only by a sign gauge $S(\xx)\in\constS$ (\Cref{lem:ptwise_sign_gauge_A}); since this sign may be chosen independently at every $\xx$, the representatives can be arbitrarily irregular.

%
%

Next, towards constructing true invariants from the pointwise derivative couplings we record how any two pointwise derivative couplings of a single Hamiltonian are related.

\begin{lemma}
    \label{lem:ptwise_sign_gauge_A}
    Let $H \in C^1(\DomX, \SymHN)$ be non-degenerate and let $\calA, \tilde{\calA}$ be pointwise derivative couplings of $H$.
    Then there exists $S : \DomX \to \constS$ such that $\calA^\mu(\xx;H) = S(\xx) \tilde{\calA}^\mu(\xx;H) S(\xx)$
    for all $\xx \in \DomX$ and $\mu = 1, \dots, \dof$.
\end{lemma}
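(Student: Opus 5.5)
The plan is to work pointwise: fix $\xx \in \DomX$ and compare the two local ordered $C^1$ eigenframes supplied by \Cref{def:ptwise_derivative_coupling}. Let $\eframe$ and $\tilde\eframe$ be ordered $C^1$ eigenframes on neighbourhoods of $\xx$ with $\calA^\mu(\xx;H) = \eframe(\xx)^\trans\partial_{x_\mu}\eframe(\xx)$ and $\tilde\calA^\mu(\xx;H) = \tilde\eframe(\xx)^\trans\partial_{x_\mu}\tilde\eframe(\xx)$. Shrink to a common open ball $B$ around $\xx$; it is connected. On $B$ both frames diagonalize $H$ with the same ordered eigenvalues, $\eframe^\trans H \eframe = \diag(\lambda_1,\dots,\lambda_\size) = \tilde\eframe^\trans H\tilde\eframe$. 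This holds because the ordering $\lambda_1<\dots<\lambda_\size$ is forced by non-degeneracy.

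The key step is to show that $S := \tilde\eframe^\trans \eframe$ is a constant element of $\constS$ on $B$. At each $\yy\in B$ the eigenvalues are simple, so each eigenspace is one-dimensional. Hence $u_k(\yy) = \pm\tilde u_k(\yy)$, and $S(\yy)$ is diagonal with entries $\sigma_k(\yy) = \langle \tilde u_k(\yy), u_k(\yy)\rangle \in\{\pm1\}$. Each $\sigma_k$ is continuous on $B$, since both frames are $C^1$, and it takes values in the discrete set $\{\pm1\}$. Since $B$ is connected, $\sigma_k$ is constant. We then have $\eframe = \tilde\eframe S$ on $B$ with $S$ constant, so $\partial_{x_\mu}\eframe = (\partial_{x_\mu}\tilde\eframe) S$.

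Substituting gives
\begin{equation*}
\calA^\mu(\xx;H) = S^\trans\tilde\eframe(\xx)^\trans\partial_{x_\mu}\tilde\eframe(\xx)S = S\,\tilde\calA^\mu(\xx;H)\,S,
\end{equation*}
using $S^\trans = S$. Setting $S(\xx)$ equal to this constant sign matrix, one independent choice per $\xx$, defines the required map $S:\DomX\to\constS$. No regularity in $\xx$ is claimed, so this choice needs no coherence across different points.

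The only delicate point, and the expected main obstacle, is that the derivative depends on the frames in a neighbourhood and not only at $\xx$. A pointwise sign relation $\eframe(\xx)=\tilde\eframe(\xx)S$ alone would not control $\partial_{x_\mu}\eframe(\xx)$. This is exactly why the argument passes through local constancy of the sign via continuity on a connected neighbourhood, rather than working only at the point $\xx$.
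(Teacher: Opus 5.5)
Your proposal is correct and follows essentially the same route as the paper: restrict to a common neighbourhood, show $S=\tilde\eframe^\trans\eframe$ is a sign matrix at each point and constant by continuity, and then differentiate $\eframe=\tilde\eframe S$. Your explicit choice of a ball makes precise the connectedness of the neighbourhood that the paper's assertion that $S$ is ``constant over $U$'' tacitly relies on.
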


\begin{proof} 
Fix $\xx \in \DomX$. 
There exist eigenframes $\eframe$ and $\tilde{\eframe}$, defined in a common neighbourhood $U\ni\xx$, such that
\begin{equation}
\label{eq:derivative_coupling_for_single_H}
\calA^\mu(\yy;H) = \eframe(\yy)^\trans\partial_{y_\mu}\eframe(\yy)
\quad \text{ and } \quad \tilde{\calA}^\mu(\yy;H) = \tilde{\eframe}(\yy)^\trans\partial_{y_\mu}\tilde{\eframe}(\yy)\qquad \forall \yy \in U.
\end{equation} 
Since $\eframe$ and $\tilde{\eframe}$ are both ordered orthonormal eigenframes of $H$ on $U$, $S \coloneqq \tilde{\eframe}^\trans\eframe$ commutes with the ordered spectrum at each $\yy \in U$. 
Since the eigenvalues of $H(\yy)$ are simple and $S(\yy) \in \ON$, it follows that $S(\yy) \in \constS$ for all $\yy \in U$.
Moreover, the continuity of both $\eframe$ and $\tilde{\eframe}$ implies that the discrete-valued map $S$ is constant over $U$.
As the two frames are related by $\eframe(\yy) = \tilde{\eframe}(\yy) S$ for $\yy \in U$, the stated result follows by inserting this relation into \eqref{eq:derivative_coupling_for_single_H}.
\end{proof}

\subsection{2-loop invariants}
\label{sec:2-loop_invariants}

As mentioned above, the derivative coupling is
not an invariant in the sense of \Cref{def:invariant_map}, since it depends on the choice of eigenframe; cf.\ \eqref{eq:berry_sign_action} and \Cref{lem:ptwise_sign_gauge_A}.
We must therefore extract from $\calA$ quantities that are insensitive to these signs. 
The simplest are the squared off-diagonals,
\begin{equation*}
\label{eq:two_loop}
    \bigl(\calA^\mu_{mn}(\xx;H)\bigr)^2 = -\,\calA^\mu_{mn}(\xx;H)\,\calA^\mu_{nm}(\xx;H),
\end{equation*}
which are clearly invariant under \eqref{eq:berry_sign_action}; we call them
\emph{$2$-loops}. 

We show that for $\size=2$ the $2$-loops already suffice.
When $\size=2$ there is a single off-diagonal entry, and its square already
exhausts the sign-covariant content of the coupling. 
In the following \Cref{prop:completeness_n2}, we restrict to a one-dimensional parameter space ($\DomX \subseteq \R$), omitting the directional index $\mu$ to write simply $\calA_{12}$, while higher-dimensional parameter spaces ($\dof > 1$) require cross-directional information, captured by the mixed loop products of \Cref{sec:graph}. 

\begin{proposition}[Completeness for $\size = 2, \dof = 1$]
\label{prop:completeness_n2}
Let $\DomX \subseteq \R$ be connected and let $H_1, H_2 \in C^1(\DomX, \Sym{2})$ be non-degenerate, isospectral families of $2 \times 2$ matrices. If
\begin{equation*}
    \bigl(\calA_{12}(\xx;H_1)\bigr)^2 = \bigl(\calA_{12}(\xx;H_2)\bigr)^2 \neq 0 \qquad \forall\, \xx \in \DomX,
\end{equation*}
then $H_1 \sim H_2$.
\end{proposition}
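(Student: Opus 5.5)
Since $\DomX\subseteq\R$ is connected, it is an interval, hence simply connected. By the discussion preceding \Cref{def:ptwise_derivative_coupling}, there are global $C^1$ ordered eigenframes $\eframe_1,\eframe_2\in C^1(\DomX,\OO(2))$ with $\eframe_i^\trans H_i\eframe_i=\Lambda:=\diag(\lambda_1,\lambda_2)$, using isospectrality to share $\Lambda$. The plan is to reduce to the rigidity computation \eqref{eq:berry_rigidity_calc}. That computation requires the derivative couplings $\calA(\cdot;H_1)$ and $\calA(\cdot;H_2)$ to agree exactly on $\DomX$, and we will achieve this by adjusting the sign of one eigenframe.

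First, the coupling of a $C^1$ frame is a continuous skew matrix. For $\size=2$ it is determined by the single continuous scalar $a_i(x):=\calA_{12}(x;H_i)=\langle u^{(i)}_1,\partial_x u^{(i)}_2\rangle$. The hypothesis gives $a_1^2=a_2^2$ and $a_1^2\neq 0$ on $\DomX$. Hence the continuous functions $a_1,a_2$ never vanish, so each has constant sign on the connected set $\DomX$ (intermediate value theorem). Since $|a_1|=|a_2|$, either $a_1\equiv a_2$ or $a_1\equiv -a_2$ on all of $\DomX$.

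In the second case, replace $\eframe_2$ by $\eframe_2 S$ with $S=\diag(1,-1)\in\constS$. This is still a $C^1$ ordered eigenframe of $H_2$, and by \eqref{eq:berry_sign_action} it flips the sign of $a_2$. So we may assume $\calA(\cdot;H_1)=\calA(\cdot;H_2)$ on $\DomX$. Then set $Q=\eframe_1\eframe_2^\trans$. The calculation \eqref{eq:berry_rigidity_calc}, using that $\calA$ is skew, gives $Q'\equiv 0$, so $Q$ is a constant $\QO\in\OO(2)$ on the connected interval. Finally,
\begin{equation*}
\QO^\trans H_1\QO=\eframe_2\eframe_1^\trans H_1\eframe_1\eframe_2^\trans=\eframe_2\Lambda\eframe_2^\trans=H_2,
\end{equation*}
so $H_1\sim H_2$.

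The main obstacle, and the only non-routine point, is the global sign matching. The hypothesis only controls squares, so the relative sign $a_1/a_2\in\{\pm1\}$ could a priori switch across $\DomX$. The nonvanishing assumption $\calA_{12}^2\neq0$, combined with continuity of the couplings of a global $C^1$ frame and connectedness, is exactly what rules this out. One should also check that a global $C^1$ eigenframe exists on an interval; this follows from the cited result \citep{Kato1995}, or directly by patching local frames along the interval and fixing signs consistently.
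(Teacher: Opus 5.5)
Your proof is correct and follows the paper's argument essentially step for step. Both use global $C^1$ eigenframes on the interval, constancy of the relative sign of the two continuous couplings by continuity and non-vanishing, a column flip of the second frame, and then the rigidity computation \eqref{eq:berry_rigidity_calc}. The only point the paper makes explicit that you leave implicit is a transfer step. The hypothesis is stated for pointwise (possibly irregular) derivative couplings, and the paper invokes \Cref{lem:ptwise_sign_gauge_A} to pass the equality of squares to the continuous couplings of your global frames; this is immediate, since squares are invariant under the sign gauge.
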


\begin{proof}
For $\dof = 1$, connectedness implies simple connectedness, hence there exist ordered $C^1$ eigenframes $\tilde{\eframe}^{(i)}$ and associated {\em continuous} derivative couplings $\tilde{\calA}_i(\cdot;H_i)$. 
According to \Cref{lem:ptwise_sign_gauge_A}, there exist $S_i(\xx) \in \constS$ such that $\calA(\xx;H_i) = S_i(\xx) \tilde{\calA}(\xx;H_i) S_i(\xx)$, and consequently $\tilde{\calA}_{12}(\xx;H_1)^2 = \tilde{\calA}_{12}(\xx;H_2)^2$.
Let $\tilde{a}_i(\xx) = \tilde{\calA}_{12}(\xx;H_i)$; then $\tilde{a}_1(\xx) = s(\xx) \tilde{a}_2(\xx)$ with $s(\xx) \in \{\pm1\}$.
By continuity, it follows that $s$ is constant. 
If $s = -1$, replace $\tilde{u}^{(2)}_2$ by $- \tilde{u}^{(2)}_2$ to obtain a new eigenframe with $s = 1$.
Thus, $\tilde{a}_1 = \tilde{a}_2$, that is $\tilde\calA(\cdot;H_1) = \tilde\calA(\cdot;H_2)$, and \eqref{eq:berry_rigidity_calc} implies that $H_1 \sim H_2$. 
\end{proof}
For $\dof > 1$ and $\DomX$ connected but not simply connected, additional complications arise; we postpone their discussion to the proof of the full result in \Cref{subsec:complete_invariant}.

For $\size\geq3$, by contrast, the squared couplings no longer determine the class; the
obstruction is one of parity. 
On a $3\times3$ coupling the sign gauge
\eqref{eq:berry_sign_action} acts on the three off-diagonals by
\begin{equation*}
    (\calA_{12},\calA_{23},\calA_{13})
    \;\longmapsto\;
    (\sigma_1\sigma_2\,\calA_{12},\;\sigma_2\sigma_3\,\calA_{23},\;\sigma_1\sigma_3\,\calA_{13}),
\end{equation*}
and the three sign factors satisfy $(\sigma_1\sigma_2)(\sigma_2\sigma_3)(\sigma_1\sigma_3)=+1$.
The gauge can therefore flip the sign of an \emph{even} number of entries but
never of a single one, so the triple product $\calA_{12}\calA_{23}\calA_{31}$ is
gauge invariant and is \emph{not} recoverable from the three squares
$\calA_{12}^2,\calA_{23}^2,\calA_{13}^2$.
This residual sign is unseen by the 2-loops.
\begin{example}[Cycle-parity at $\size=3$]
\label{ex:berry_3x3_counterexample}
Take $\Lambda=\diag(\lambda_1,\lambda_2,\lambda_3)$ with distinct $\lambda_i$ and
\begin{equation}
    H_\pm(x) = \exp(x\,\calA_\pm)\,\Lambda\,\exp(-x\,\calA_\pm),
    \qquad 
    \calA_\pm = {
        \scriptscriptstyle
    \begin{pmatrix} 0 & 1 & 1 \\ -1 & 0 & \pm 1 \\ - 1 & \mp 1 & 0 \end{pmatrix}
    }.
\end{equation}
Both families are isospectral with constant spectrum $\{\lambda_1,\lambda_2,\lambda_3\}$, and a direct computation gives the configuration-independent couplings $\calA(H_\pm)=\calA_\pm$. 
The two differ only in the sign of the $(2,3)$ entry, so all three $2$-loops coincide, $\calA_{mn}(H_+)^2=\calA_{mn}(H_-)^2$, while the
triple products $\calA_{12}\calA_{23}\calA_{31}$ have opposite signs. 
The families are not gauge equivalent: a constant $\QO$ with $H_-(x)=\QO^\trans H_+(x)\QO$ must, evaluated at $x=0$, satisfy $\QO^\trans\Lambda\QO=\Lambda$, forcing $\QO=\diag(\sigma_1,\sigma_2,\sigma_3)\in\constS$;
differentiating at $x=0$ then forces
$\sigma_1\sigma_2=\sigma_1\sigma_3=1$ on the $(1,2),(1,3)$ edges
but $\sigma_2\sigma_3=-1$ on the $(2,3)$ edge, a contradiction.
\end{example}

The squared couplings must therefore be supplemented by products of
coupling entries around longer cycles. 
These \emph{loop products}, and the
graph structure that organises them, are the subject of the next subsection. 

\subsection{Interaction graphs and general loop products}
\label{sec:graph}

To prepare for the generalization of 2-loop invariants, we introduce the interaction graph of a matrix and the loop products along its cycles. 
We use standard terminology and concepts from graph theory \citep{bollobas2012graph}.

Let $A\in \SymHN \cup \SkewHN$ be a general symmetric or skew-symmetric matrix. 
The \emph{graph} of $A$ is the graph $\graphA:=(\Verti,\edge(A))$ with vertices $\Verti=\{1,\dots,\size\}$ and edges $\edge(A) := \bigl\{\edgepair{m}{n} :m,n\in \Verti,\ m\neq n,\ A_{mn} \neq 0\bigr\}$. 
If $\Aset \subset \SymHN \cup \SkewHN$ is a finite family then we denote the aggregate graph or graph union by 
\begin{equation*}
    \graph\big(\Aset\big) := \Big(\Verti,\edge\big(\Aset\big)\Big), \qquad 
    \text{with} \quad \edge\big(\Aset\big) := \bigcup_{\mu=1}^\dof \edge(A^\mu). 
\end{equation*}
For a Hamiltonian family $H \in C^1(\DomX, \SymHN)$ we define its pointwise 
interaction graph by 
\begin{equation}
\label{graph:G_A_x_H}
    \graph\big(\calA(\xx;H)\big) := \graph\big( \{ \calA^\mu(\xx;H) \}_{\mu=1}^{\dof} \big),
\end{equation}
which is unambiguous with respect to the choice of local eigenframes by \Cref{lem:ptwise_sign_gauge_A}.
Next we define loops in this graph that yield gauge-invariant scalars from products of entries of $\calA$.

\begin{definition}[Loops and loop products]
\label{def:mixed_loop_product}

Let $\{A^\mu\}_{\mu=1}^\dof \subset \SkewHN$.
A \emph{$k$-loop} ($k \geq 2$) in the aggregate graph $ \graph(\{A^\mu\}_{\mu=1}^\dof)$ is a sequence
\begin{equation*}
  \bgamma=\big( (e_1, \mu_1), \dots, (e_k, \mu_k) \big)  
\end{equation*}
such that each edge $e_i = \{m_i, m_{i+1}\} \in \edge(A^{\mu_i})$ and $m_{k+1} = m_1$. 
We denote the length by $|\bgamma|:=k$.

The \emph{loop product} along a loop $\bgamma$ is 
\begin{equation*}
\Pi\big(\bgamma;\Aset\big) := A^{\mu_1}_{m_1 m_2}\, A^{\mu_2}_{m_2 m_3}\cdots A^{\mu_k}_{m_k m_1}.
\end{equation*}
We say that $\Pi(\bgamma;\Aset)$ is a \emph{pure loop product} if all $\mu_i$ coincide, and a \emph{mixed loop product} otherwise.
\end{definition}

The relevance of loop products is that they are invariant under the residual
sign gauge (\Cref{prop:loop_product_invariance}) and, as the completeness proof in \Cref{subsec:complete_invariant} shows, resolve the remaining sign ambiguities.

\begin{proposition}[Sign-gauge invariance of loop products]
\label{prop:loop_product_invariance}
Let $A^1,\dots,A^\dof\in\SymHN\cup\SkewHN$, and let $S=\diag(\sigma_1,\dots,\sigma_\size)\in\constS$ be a sign matrix. 
Define $\tilde A^\mu := S A^\mu S$; then $\graph(\tildeAset)=\graph(\Aset)$.
Furthermore, for every loop $\bgamma$ in $\graph(\Aset)$, the loop product is exactly invariant
\begin{equation*}
    \Pi\big(\bgamma;\tildeAset\big) = \Pi\big(\bgamma;\Aset\big).
\end{equation*}
That is, the loop products depend only on the sign-gauge orbit of $\Aset$. 
\end{proposition}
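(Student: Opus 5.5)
The plan is a direct entrywise computation. Since $S=\diag(\sigma_1,\dots,\sigma_\size)$, conjugation acts on each entry by
\begin{equation*}
    \tilde A^\mu_{mn} = (S A^\mu S)_{mn} = \sigma_m\sigma_n A^\mu_{mn}.
\end{equation*}
This is the same transformation rule as \eqref{eq:berry_sign_action}, now stated for an arbitrary symmetric or skew-symmetric family.

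\emph{Graph equality.} Because $\sigma_m\sigma_n\in\{\pm1\}$ is nonzero, we have $\tilde A^\mu_{mn}\neq 0$ exactly when $A^\mu_{mn}\neq0$. Hence $\edge(\tilde A^\mu)=\edge(A^\mu)$ for every $\mu$. Taking the union over $\mu$ gives $\edge(\tildeAset)=\edge(\Aset)$, and so the aggregate graphs coincide. In particular, every loop $\bgamma$ of $\graph(\Aset)$ is also a loop of $\graph(\tildeAset)$, so both loop products are defined on the same loops.

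\emph{Loop product invariance.} Fix a loop $\bgamma=((e_1,\mu_1),\dots,(e_k,\mu_k))$ with vertex sequence $m_1,\dots,m_k$ and $m_{k+1}=m_1$. Substituting the entrywise rule into the definition of the loop product gives
\begin{equation*}
    \Pi\big(\bgamma;\tildeAset\big)=\Big(\prod_{i=1}^k \sigma_{m_i}\sigma_{m_{i+1}}\Big)\,\Pi\big(\bgamma;\Aset\big).
\end{equation*}
The sign factor telescopes. Each index $m_i$ appears exactly twice: once as the right endpoint of edge $i-1$ and once as the left endpoint of edge $i$, with indices taken cyclically via $m_{k+1}=m_1$. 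The prefactor is therefore $\prod_i\sigma_{m_i}^2=1$. This argument still holds when vertices repeat along the loop, since each occurrence in the sequence contributes a squared factor. It also holds for mixed loops, because the factor $\sigma_m\sigma_n$ does not depend on $\mu$.

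There is no real obstacle here. The only point needing care is the bookkeeping of the cyclic closure $m_{k+1}=m_1$, which is what makes every sign appear an even number of times. A path that is not closed would leave the uncancelled endpoint factor $\sigma_{m_1}\sigma_{m_{k+1}}$.
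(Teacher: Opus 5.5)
Your proposal is correct and follows the paper's proof essentially verbatim: the entrywise rule $\tilde A^\mu_{mn}=\sigma_m\sigma_n A^\mu_{mn}$ preserves each zero pattern $\edge(A^\mu)$, hence the aggregate graph and the set of labelled loops. The sign prefactor around the closed loop then collapses to $\prod_r\sigma_{m_r}^2=1$ because of the cyclic closure $m_{k+1}=m_1$.
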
 
\begin{proof}
Since $\sigma_m\sigma_n\in\{\pm1\}$, the map $A^\mu_{mn}\mapsto\sigma_m\sigma_n A^\mu_{mn}$ preserves the zero pattern of each $A^\mu$, so $\edge(\tilde A^\mu)=\edge(A^\mu)$ for every $\mu$ and hence $\graph(\tildeAset)=\graph(\Aset)$. 
For a $k$-loop $\bgamma=\big((e_1,\mu_1),\dots,(e_k,\mu_k)\big)$ with $e_r=\{m_r,m_{r+1}\}$
\begin{equation*}
    \Pi\big(\bgamma;\tildeAset\big)
    = \prod_{r=1}^{k}\sigma_{m_r}\sigma_{m_{r+1}}\, A^{\mu_r}_{m_r m_{r+1}}
    = \Bigl(\prod_{r=1}^{k}\sigma_{m_r}^2\Bigr)\prod_{r=1}^{k} A^{\mu_r}_{m_r m_{r+1}}
    = \Pi\big(\bgamma;\Aset\big),
\end{equation*}
where each vertex index appears exactly twice around the closed loop, so the sign factors cancel in pairs.
\end{proof}

The number of loops increases rapidly with both $\size$ and $\dof$.
In the worst case, when the aggregate graph is complete and every edge is present in all $\dof$ directions, there are $\frac{\size!}{(\size-k)!} \dof^k$ distinct $k$-loops: each of the $\frac{\size!}{(\size-k)!}$ ordered vertex cycles can be decorated with any of $\dof^k$ direction labels.
To use them in practice we extract a generating subset.
This is achieved next via a spanning tree and the corresponding fundamental cycles, a technique adopted from graph-gauge problems~\citep{korotyaev2017magnetic}.

\subsection{A complete and efficient pointwise invariant}
\label{subsec:complete_invariant}
Let $H\in C^1(\DomX,\SymHN)$ have
non-degenerate spectrum $\lambda_1(\xx)<\cdots<\lambda_\size(\xx)$ and couplings
$\calA(\xx;H)$.
The first component of our invariant is the family of spectra $( \lambda_m(\xx) )_{\xx \in \DomX, m=1,\dots,\size}$. 

The second component records, on \emph{every} vertex pair, the pure 2-loop over all directions, 
\begin{equation}
\label{eq:edge-detection}
\map^{(2\rm p)}(\xx;H) := \Bigl(\calA^\mu_{mn}(\xx;H)^2\Bigr)_{\substack{1\le m<n\le\size \\ \mu = 1, \dots, \dof}}. 
\end{equation}
In particular, $\map^{(2\rm p)}$ encodes the
zero pattern of $\calA(\xx;H)$, equivalently the edge set of the interaction
graph $\graph(\calA(\xx;H))$, at every $\xx$.
This is what makes the remaining
components well-defined as invariants: although the trees, labels, and cycles
below are read off the reference family, $\map^{(2\rm p)}$ forces two families with equal
invariants to share the same interaction graph.

The additional structural hypothesis we require next is that the aggregate interaction graph $\graph(\calA(\xx;H)) = (\Verti, \edge)$ is connected at every configuration $\xx \in \DomX$. 
Then, for each edge $e \in \edge, \xx \in \DomX$, there exists an {\em active direction label} $\ell_e(\xx) \in \{1, \dots, \dof\}$ such that $\calA^{\ell_e(\xx)}_e(\xx;H) \neq 0$. 
The collection $\ell := (\ell_e(\xx))_{e \in \edge, \xx \in \DomX}$ constitutes an admissible {\em active-direction selection}. 

An active-direction selection results in the {\em mixed 2-loop invariant} 
\begin{equation}
\label{eq:mixed-2-loop-active}
    \map^{(2\rm m)}_{\ell}(\xx;H) := \Bigl( \calA^{\ell_{e}(\xx)}_{e}(\xx;H)\, \calA^\mu_{e}(\xx;H) \Bigr)_{\substack{e \in \edge(\graph(\calA(\xx;H))) \\ \mu \in \{1,\dots,\dof\} \setminus \{\ell_e(\xx)\}}},
\end{equation}
which constitutes the third component of our complete invariant.
While the total number of mixed 2-loops scales as $\mathcal{O}(\dof^2 |\edge|)$, the active-direction selection reduces this to $\mathcal{O}(\dof|\edge|)$.

Finally, we reduce the set of general loop products. 
Since we assume that the aggregate graph is connected there exist \emph{spanning trees} $\tree(\xx) = (\Verti,\edge(\tree(\xx)))$ for each $\xx \in \DomX$; i.e., a connected acyclic subgraph that contains all vertices. 
A spanning tree is in general not unique; but for our first main result any choice suffices.
Let $e \in \edge\setminus \edge(\tree)$. 
The \emph{fundamental cycle} of $e$ with respect to $\tree$ is the unique loop obtained by adding $e$ to $\tree$. 
This results in a \emph{fundamental mixed cycle basis}
\begin{equation*}
    \begin{aligned}
    \Fset_{T, \ell}(\xx) := 
    \bigl\{ \bgamma_e : e \in \edge \setminus \edge(\tree)\bigr\}, \quad 
    \text{where}  & \quad 
    \bgamma_e := \big( (e_1, \ell_{e_1}(\xx)), \dots, (e_k, \ell_{e_k}(\xx)) \big), 
    \end{aligned}
\end{equation*}
Since $\graph$ is connected, every spanning tree $\tree$ has $|\Verti|-1$
edges. 
Hence the cardinality of the set $\Fset$ is $\beta =
|\edge|-|\edge(\tree)|=|\edge|-|\Verti|+1$ (the cyclomatic or first Betti number of $\graph$). 
The number of fundamental cycles scales as $\mathcal{O}(|\edge|)$. 
The resulting fundamental-cycle products are collected in the fourth and final component of the invariant,
\begin{equation*} 
    \map^{({\rm f})}_{\tree,\ell}(\xx;H) := 
    \Big( \Pi\big(\bgamma;\calA(\xx;H) \big) \Big)_{\bgamma \in \Fset_{T,\ell}(\xx)}. 
\end{equation*}
The fundamental cycles may have arbitrary length in general; in the generic case
where the interaction graph is complete, they reduce to triangles, and the
construction is worked out explicitly in \Cref{rem:three_loop_complete_graph}. 

Together with the spectrum these assemble into the invariant map
\begin{equation}
\label{eq:invariant_map_main}
\map_{\tree, \ell}(\xx;H) := \Bigl(\bigl(\lambda_n(\xx; H)\bigr)_{n=1}^\size, 
\map^{(2\rm p)}(\xx;H), \map^{(2\rm m)}_\ell(\xx;H), \map^{({\rm f})}_{\tree, \ell}(\xx;H)
\Bigr).
\end{equation}
Note that the map $\map_{\tree, \ell}$ can be evaluated on an arbitrary family $\tilde{H}$, which need not be the reference Hamiltonian $H$ that induced the choices of $\tree$ and $\ell$.
We also remark that the arbitrariness in the construction of $\map_{\tree, \ell}$ can be removed by making deterministic choices of $\tree(\xx)$ and $\ell_e(\xx)$; cf. \Cref{rem:deterministic_map}. 

We are now ready to state our first main result. Under mild additional assumptions and a more involved selection of $\tree, \ell$, it can be strengthened to establish {\em stable inversion}, which we will see in \Cref{thm:full_stability}. 

\begin{theorem}[Completeness]
\label{thm:complete_invariant_new}
Let $H_1 \in C^1(\DomX,\SymHN)$ be non-degenerate, with connected aggregate interaction graph $\graph\bigl(\calA(\xx;H_1)\bigr)$ for every $\xx\in\DomX$.
Let $\ell$ be an associated admissible active-direction selection and $\tree$ a spanning tree family of $H_1$. Then, a second non-degenerate family $H_2 \in C^1(\DomX,\SymHN)$ belongs to the gauge class of $H_1$, i.e., $H_1 \sim H_2$ if and only if
\begin{equation*}
    \map_{\tree, \ell}(\xx;H_1) = \map_{\tree, \ell}(\xx;H_2)
        \qquad \forall~\xx \in \DomX. 
\end{equation*}
At each $\xx$, the invariant $\map_{\tree,\ell}(\xx;\cdot)$ comprises a total of $\mathcal{O}(\dof\size^2)$ scalars.
\end{theorem}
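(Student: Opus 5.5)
The plan is to prove the two implications separately. The forward direction is routine. The reverse direction goes in two stages: first a \emph{pointwise} statement, that equality of the invariants forces the couplings of $H_1$ and $H_2$ to be sign-gauge conjugate at each $\xx$; then a \emph{local-to-global} argument producing a single constant $\QO$.

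\emph{Forward direction.} Suppose $H_2=\QO^\trans H_1\QO$. The two families are isospectral. If $\eframe$ is a local ordered $C^1$ eigenframe of $H_1$, then $\QO^\trans\eframe$ is one of $H_2$ with the same coupling $\eframe^\trans\partial_{x_\mu}\eframe$. By \Cref{lem:ptwise_sign_gauge_A}, any two pointwise couplings of $H_1$ and $H_2$ are therefore related by some $S(\xx)\in\constS$. The 2-loops, the mixed 2-loops $\calA^{\ell_e}_e\calA^\mu_e$ and the fundamental-cycle products are all loop products, so they are invariant by \Cref{prop:loop_product_invariance}. Hence $\map_{\tree,\ell}(\xx;H_1)=\map_{\tree,\ell}(\xx;H_2)$.

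\emph{Pointwise step of the converse.} Fix $\xx$ and write $A_i=\calA(\xx;H_i)$.
\begin{enumerate}
\item Equality of $\map^{(2\rm p)}$ gives $|A_2^\mu{}_{mn}|=|A_1^\mu{}_{mn}|$ for every pair and direction. So the two coupling families have the same zero pattern and the same aggregate graph $\graph$.
\item Choose signs $\sigma_m$ by walking along the spanning tree $\tree(\xx)$ from a root. On each tree edge $e$, pick $\sigma$ so that $A_2^{\ell_e}{}_e=\sigma_m\sigma_nA_1^{\ell_e}{}_e$; this is possible because the active entries are nonzero and equal in modulus. Put $S=\diag(\sigma)$ and $\tilde A_2=SA_2S$.
\item On a non-tree edge $e$, the fundamental-cycle product along $\bgamma_e$ agrees for $\tilde A_2$ and $A_1$. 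All other factors of that product are tree edges already matched and nonzero, so the active entry on $e$ agrees as well.
\item For every edge and every other direction $\mu$, the mixed 2-loop $\calA^{\ell_e}_e\calA^\mu_e$ with the nonzero active entry now fixes $\calA^\mu_e$.
\end{enumerate}
Thus $A_2=SA_1S$. Moreover, since $\graph$ is connected, $S$ is unique up to $\pm I$.

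\emph{Local-to-global step.} I expect this to be the main obstacle, because pointwise couplings may be wildly irregular and $\DomX$ need not be simply connected.
\begin{enumerate}
\item On a small ball $U\ni\xx_0$, take $C^1$ ordered frames $\eframe_1,\eframe_2$ with continuous couplings $\tilde\calA_i$. By \Cref{lem:ptwise_sign_gauge_A} and the pointwise step, there is $S(\yy)\in\constS$ with $\tilde\calA_2(\yy)=S(\yy)\tilde\calA_1(\yy)S(\yy)$.
\item Shrink $U$ so that the edges of a spanning tree of $\graph(\calA(\xx_0;H_1))$ stay nonzero on $U$ for some fixed direction, which is possible by continuity. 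Then the edge signs $\sigma_m\sigma_n=\tilde\calA_2/\tilde\calA_1$ on these tree edges are continuous and $\{\pm1\}$-valued, hence constant.
\item So $S$ may be taken constant on $U$. Replacing $\eframe_2$ by $\eframe_2S$ makes the couplings equal, and \eqref{eq:berry_rigidity_calc} shows that $\QO=\eframe_1\eframe_2^\trans$ is constant with $H_2=\QO^\trans H_1\QO$ on $U$.
\end{enumerate}

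\emph{Globalizing.} To avoid any monodromy issue, fix one such $\QO$ obtained on a ball and let $W=\{\xx: H_2=\QO^\trans H_1\QO \text{ near } \xx\}$. By construction $W$ is open and nonempty. For closedness, let $\xx_0\in\overline W$. Any valid local gauge $Q'$ near $\xx_0$ has the form $\eframe_1S\eframe_2^\trans$ with $S$ unique up to sign, so $Q'=\pm\QO$ on the overlap with $W$. Since $-\QO$ induces the same conjugation, $\xx_0\in W$. Connectedness of $\DomX$ then gives $W=\DomX$.

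\emph{Counting.} The invariant has $\size$ eigenvalues, $\dof\size(\size-1)/2$ pure 2-loops, $(\dof-1)|\edge|$ mixed 2-loops and $\beta\le|\edge|$ fundamental-cycle products. Since $|\edge|\le\size^2$, the total is $\mathcal O(\dof\size^2)$.
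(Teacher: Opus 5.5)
Your proof is correct and follows the paper's argument. The main steps match: pointwise sign reconstruction from the pure and mixed 2-loops and the fundamental-cycle products (you gauge-fix directly along the spanning tree instead of going through balance of the signed graph), constancy of the tree-edge signs on a ball with the tree and labels frozen at $\xx_0$, the rigidity computation \eqref{eq:berry_rigidity_calc}, and gluing through the $\pm I$ ambiguity that connectivity of the interaction graph forces.

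Your open--closed globalization is a tidier packaging of the paper's countable-cover induction in Step~6. One step is left implicit there: any constant local gauge $Q'=\eframe_1 S\eframe_2^\trans$ has $S$ constant with $\tilde{\calA}_2^\mu=S\tilde{\calA}_1^\mu S$ (read \eqref{eq:real_rigidity_dQ} backwards), and this is what licenses your appeal to uniqueness of $S$ up to sign.
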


\begin{proof}
The forward implication is immediate. 
Suppose $H_2(\xx)=\QO^\trans H_1(\xx)\QO$ for all $\xx$ with a constant $\QO\in\ON$. 
If $\eframe_1$ is a local ordered eigenframe for $H_1$, then $\eframe_2:=\QO^\trans\eframe_1$ is one for $H_2$, with identical ordered eigenvalues and, since $\QO$ is constant, identical derivative couplings $\calA^\mu(\cdot;H_2)=\calA^\mu(\cdot;H_1)$. 
Any other choice of local eigenframes alters the couplings only by simultaneous sign conjugation (\Cref{lem:ptwise_sign_gauge_A});
this leaves the two-loop and fundamental-cycle products invariant (\Cref{prop:loop_product_invariance}). 
Hence $\map_{\tree,\ell}(H_1)=\map_{\tree,\ell}(H_2)$. 

\medskip

We now prove the converse. Assume $\map_{\tree,\ell}(H_1)=\map_{\tree,\ell}(H_2)$. 
Then the ordered spectra agree: $\lambda_n(\xx;H_1)=\lambda_n(\xx;H_2)$ for all $n=1,\ldots,\size$ and $\xx\in\DomX$. 
Equality of $\map^{(2\rm p)}$ ensures that $\calA^\mu_{mn}(\xx;H_1)^2 = \calA^\mu_{mn}(\xx;H_2)^2$ for every $\xx$, every pair $\{m,n\}$ and every direction $\mu \in \{1, \dots, \dof\}$.
Hence the two families have a common interaction graph, which we denote $\graph(\xx)=(V,\edge(\xx)):=\graph(\calA(\xx;H_1))=\graph(\calA(\xx;H_2))$.

{\it Step 1: Reconstruct $\calA$ up to a sign: }
We first reconstruct the sign relation at a fixed point $\xx$. 
Fix an edge $e \in \edge(\xx)$ and let $\ell = \ell_e(\xx)$. 
Equality of the pure and mixed two-loop yields, for every $\mu$,
\begin{equation*}
    \calA^\ell_{e}(\xx;H_1)\calA^\mu_{e}(\xx;H_1) = \calA^\ell_{e}(\xx;H_2)\calA^\mu_{e}(\xx;H_2).
\end{equation*}
Taking $\mu=\ell$ gives $(\calA^\ell_{e}(\xx;H_1))^2=(\calA^\ell_{e}(\xx;H_2))^2$, ensuring the existence of a sign $s_e(\xx)\in\{\pm1\}$ such that $\calA^\ell_{e}(\xx;H_2)=s_e(\xx)\calA^\ell_{e}(\xx;H_1)$. Substituting this into the mixed identities and cancelling the non-zero factor $\calA^\ell_{e}(\xx;H_1)$, we obtain $\calA^\mu_{e}(\xx;H_2)=s_e(\xx)\calA^\mu_{e}(\xx;H_1)$ for all $\mu=1,\ldots,\dof$. Thus, on each edge $e$, the two families differ by a single, direction-independent edge sign
\begin{equation*}
    \calA^\mu_{e}(\xx;H_2) = s_e(\xx)\calA^\mu_{e}(\xx;H_1), \qquad \mu=1,\ldots,\dof, \quad e \in \edge(\xx).
\end{equation*}

{\it Step 2: Trivial cycle holonomy: }
We now use the fundamental-cycle products to show that these edge signs have trivial cycle holonomy.
Let $e$ be a non-tree edge and let $\bgamma_e$ be its corresponding fundamental cycle. 
Because all labels on $\bgamma_e$ are canonically selected to be non-zero, the corresponding cycle product is non-zero.
The equality of the fundamental-cycle products,
\begin{equation*}
    \Pi\big(\bgamma_e;\calA(\xx;H_2)\big) = \Pi\big(\bgamma_e;\calA(\xx;H_1)\big),
\end{equation*}
expands, upon applying the per-edge sign relation $\calA^\mu_e(\xx;H_2)=s_e(\xx)\calA^\mu_e(\xx;H_1)$ edge by edge, to
\begin{equation*}
    \Big(\prod_{f\in\bgamma_e} s_f(\xx)\Big)\,\Pi\big(\bgamma_e;\calA(\xx;H_1)\big)
    = \Pi\big(\bgamma_e;\calA(\xx;H_1)\big),
\end{equation*}
and since the cycle product is non-zero, the coupling factor cancels to leave
\begin{equation*}
    \prod_{f\in\bgamma_e} s_f(\xx)=1 .
\end{equation*}

{\it Step 3: Factorize edge signs into vertex signs: }
Because the fundamental cycles span a complete basis of the cycle space over $\mathbb F_2$ \citep{bollobas2012graph}, it follows that $\prod_{f\in \mathfrak C}s_f(\xx)=1$ for every cycle $\mathfrak C$ in $\graph(\xx)$.
Equivalently, the signed graph $(\graph(\xx),s(\xx))$ is balanced \citep{harary1953notion}. 
Consequently, there exist vertex signs $\sigma_1(\xx),\ldots,\sigma_\size(\xx)\in\{\pm1\}$ such that
\begin{equation*}
    s_{\{m,n\}}(\xx)=\sigma_m(\xx)\sigma_n(\xx) \qquad \forall\,\{m,n\}\in \edge(\xx).
\end{equation*}
This is constructively verified by choosing a root vertex $v_0$, setting $\sigma_{v_0}=1$, and defining $\sigma_v$ as the product of edge signs along any path from $v_0$ to $v$; the trivial cycle holonomy guarantees path-independence.
Thus, at every $\xx$, there exists a diagonal sign matrix $S(\xx)=\operatorname{diag}(\sigma_1(\xx),\ldots,\sigma_\size(\xx))\in\constS$ satisfying
\begin{equation*}
    \calA^\mu(\xx;H_2) = S(\xx)\calA^\mu(\xx;H_1)S(\xx), \qquad \mu=1,\ldots,\dof .
\end{equation*}
(Off the interaction graph both sides vanish, by the equality of the two graphs.)

{\it Step 4: Locally constant signs: } 
Let $U \subset \DomX$ be an open and simply connected neighbourhood of $\xx_0 \in \DomX$. Let $\eframe_1, \eframe_2$ be $C^1$-eigenframes for $H_1, H_2$ in $U$, and let $\tilde{\calA}(\cdot;H_1), \tilde{\calA}(\cdot;H_2) \in C(U, \SkewHN)$ be the associated derivative couplings (now genuine Berry connections) in $U$. Let $\tilde\tree := \tree(\xx_0)$ and $\tilde\ell := \ell(\xx_0)$; by continuity of $\tilde{\calA}(\cdot;H_1)$, these remain admissible for all $\xx \in U$, provided $U$ is sufficiently small.

\Cref{lem:ptwise_sign_gauge_A} states that there exist $S_i(\xx) \in \constS, \xx \in U$, such that 
\begin{equation*}
    \calA^\mu(\xx;H_i) = S_i(\xx) \tilde{\calA}^\mu(\xx;H_i) S_i(\xx), 
    \qquad \mu = 1, \dots, \dof. 
\end{equation*}
Combining this identity with Step 3, we obtain $\tilde{S}(\xx) = S_1(\xx) S(\xx) S_2(\xx) \in \constS$ such that 
\begin{equation*}
    \tilde{\calA}^\mu(\xx;H_2) = \tilde{S}(\xx)\tilde{\calA}^\mu(\xx;H_1)\tilde{S}(\xx), \qquad \mu=1,\ldots,\dof.
\end{equation*}
We write $\tilde{S}(\xx) = \diag(\tilde\sigma_1(\xx), \dots, \tilde\sigma_\size(\xx))$;
for an edge $e = \{m, n\}$ of $\tilde\tree$ we obtain
\begin{equation*}
    \tilde\sigma_m(\xx) \tilde\sigma_n(\xx) = 
    \frac{
        \tilde{\calA}^{\tilde{\ell}_e}_e(\xx;H_2)
    }{
        \tilde{\calA}^{\tilde{\ell}_e}_e(\xx;H_1)
    }
    \in \{ \pm 1 \}, \qquad \xx \in U.
\end{equation*}
Since the right-hand side is continuous and discrete-valued on the connected set $U$, it is constant, and hence so is the left-hand side $\tilde\sigma_m(\xx)\tilde\sigma_n(\xx)$. The vertex signs are products of these now-constant tree-edge signs along tree paths, so the individual $\tilde\sigma_i(\xx)$ are constant as well.

{\it Step 5: Locally constant gauge: } Proceeding from Step 4, let $Q_U(\xx):=\tilde{\eframe}_1(\xx) \tilde{S} \tilde{\eframe}_2(\xx)^\trans$.
Because the eigenvalues match and $\tilde{S}$ commutes with the diagonal eigenvalue matrix, we obtain $H_2(\xx)=Q_U(\xx)^\trans H_1(\xx) Q_U(\xx)$ for all $\xx\in U$.
Furthermore, calculating the derivative yields
\begin{equation}
\label{eq:real_rigidity_dQ}
    \partial_{x_\mu}Q_U = \tilde{\eframe}_1\tilde{\calA}_1^\mu \tilde{S} \tilde{\eframe}_2^\trans - \tilde{\eframe}_1\tilde{S} \tilde{\calA}_2^\mu\tilde{\eframe}_2^\trans= \tilde{\eframe}_1\bigl(\tilde{\calA}_1^\mu \tilde{S} -\tilde{S} \tilde{\calA}_2^\mu\bigr)\tilde{\eframe}_2^\trans = 0,
\end{equation}
since $\tilde{\calA}_2^\mu=\tilde{S} \tilde{\calA}_1^\mu \tilde{S} $. Consequently, $Q_U$ is constant in $U$. This yields a constant orthogonal gauge on every sufficiently small ball.

{\it Step 6: Globally constant gauge: } It remains to glue the local gauges together into a global one.
Let $B$ and $B'$ be two balls with non-empty intersection, and let $Q_B,Q_{B'}$ be the corresponding constant gauges. 
For any $\xx\in B\cap B'$, we have $Q_B^\trans H_1(\xx)Q_B = Q_{B'}^\trans H_1(\xx)Q_{B'}$.
Defining $R:=Q_BQ_{B'}^\trans$, it follows that $R^\trans H_1(\xx)R=H_1(\xx)$. 
Given $H_1(\xx)$ is non-degenerate, $R$ expressed in an ordered eigenbasis of $H_1(\xx)$ must be a diagonal sign matrix, i.e., $\widehat R := \eframe_1(\xx)^\trans R\,\eframe_1(\xx) = {\rm diag}(r_1, \dots, r_\size) \in \constS$.

Differentiating the gauge relations implies $R^\trans\partial_{x_\mu}H_1(\xx)R = \partial_{x_\mu}H_1(\xx)$, and \eqref{eq:berry_entries} then implies
%
\begin{equation*}
    (\lambda_n(\xx)-\lambda_m(\xx))\calA^\mu_{mn}(\xx;H_1) = (\lambda_n(\xx)-\lambda_m(\xx))r_m r_n\calA^\mu_{mn}(\xx;H_1).
\end{equation*}
Because the interaction graph at $\xx$ is connected, this forces all diagonal entries of $\widehat R$ to coincide. Thus $R=\pm I$, enforcing $Q_{B'}=\pm Q_B$.
Because the sign $\pm$ is transparent to the conjugation, we may replace $Q_{B'}$ with $\pm Q_{B'}$ to obtain a single constant gauge on $B \cup B'$. 

To conclude, let $B_1, B_2, B_3, \dots$ be a countable cover of $\DomX$ with constant gauges $Q_{i}$ in $B_i$. Since $\DomX$ is connected we can choose $B_i$ to have non-empty intersection with $\bigcup_{j<i} B_j$. Taking the foregoing argument as an induction step, we can construct signs such that $Q_i = \pm Q_j$ and obtain a global constant gauge $\QO\in\ON$ such that
\begin{equation*}
    H_2(\xx)=\QO^\trans H_1(\xx)\QO \qquad \forall\,\xx\in\DomX.
\end{equation*}
Therefore, $H_1\sim H_2$.
\end{proof}

\begin{remark}
The gluing argument from Step 6 in the foregoing proof requires $\DomX$ to be connected, but not simply-connected. 
Topological obstructions arise when one glues objects with sign or phase ambiguities, which is exactly why a global continuous eigenframe does not generally exist. 
Here, however, we are not gluing the gauge matrices $Q_B$, but their conjugation action $H \mapsto Q_B^\trans H\, Q_B$. 
Because the overlap relation is $Q_{B'} = \pm Q_B$, the residual sign $\pm I$ cancels under conjugation. 
Therefore, the conjugation action is unambiguously defined on every overlap. 
With no sign ambiguity to resolve, this locally constant action extends uniquely to the entire connected domain, regardless of its topology.
\end{remark}

\begin{remark}[Deterministic construction yields a single complete invariant]
\label{rem:deterministic_map}
As constructed, $\map_{\tree,\ell}$ depends on two arbitrary choices at each
configuration $\xx$: a spanning tree for the aggregate interaction graph $\tree(\xx)$ and active direction label $\ell(\xx)$. 
This suits the application of our results for numerical inversion problems ideally, 
but does not provide a single map on
$C^1(\DomX,\SymHN)$ in the sense of \Cref{def:invariant_map}, but a family of
maps indexed by $(\tree,\ell)$. 
We can remove this arbitrariness by making the choices canonical and reading them off each input $H$. 

Assuming a fixed vertex ordering $1<\cdots<\size$, we order the unoriented edges
$\{i,j\}$ lexicographically by the pair $(\min\{i,j\},\max\{i,j\})$. 
For a connected graph $\graph(\calA(\xx;H))$, we define $\tree(\xx)$ as the
lexicographically first spanning tree, comparing spanning trees via their
ordered edge lists. 
For each edge $e=\{m,n\}\in \edge(\graph(\calA(\xx;H)))$, the
active label can then be chosen as 
\begin{equation*}
    \ell_{mn}(\xx)
    :=
    \min\bigl\{
        \mu\in\{1,\ldots,\dof\} \mid \calA^\mu_{mn}(\xx;H)\neq 0
    \bigr\}.
\end{equation*}
These rules depend strictly on the zero pattern of the derivative couplings.
Because a change in the local eigenframe merely conjugates $\calA^\mu$ by a
diagonal sign matrix, all zero patterns are preserved; consequently
$\tree(\xx)$ and $\ell(\xx)$ are independent of the pointwise sign
gauge. 
Although $\xx\mapsto\tree(\xx)$ and $\xx\mapsto\ell(\xx)$ need
not be continuous, this poses no difficulty: they serve solely as pointwise
indexing rules for the reduced loop products, and the completeness argument
imposes no regularity on them. 
With these canonical rules in hand, we let each family $H$ induce its own 
``canonical'' map 
\begin{equation*}
    \map_\ast(H)(\xx) := \Big(
        \tree(\xx), \ell(\xx), 
        \map_{\tree,\ell}(\xx;H)
        \Big).
\end{equation*}
It then follows from \Cref{thm:complete_invariant_new} that $\map_\ast$ is a complete invariant in the sense of \Cref{def:invariant_map}. That is, $H_1 \sim H_2$ if and only if $\map_\ast(H_1)(\xx) = \map_\ast(H_2)(\xx)$ for all $\xx \in \DomX$.
\end{remark}

\begin{remark}[The complete graph: fundamental cycles are triangles]
\label{rem:three_loop_complete_graph}

We illustrate the fundamental cycle in   \Cref{subsec:complete_invariant} on the complete graph $K_\size$. 
The lexicographic rule of
\Cref{rem:deterministic_map} compares spanning trees by their ordered edge
lists, and the lexicographically smallest edges are 
$\{1,2\}, \{1,3\}, \ldots, \{1,\size\}$. These edges form
a spanning tree, namely the star $\tree_*$ centred at vertex $1$, with $\edge_{\tree_*}=\{\{1,j\} : 2\le j\le\size\}$. Each non-tree edge $\{m,n\}$
($2\le m<n\le\size$) closes the fundamental triangle $\bgamma_{mn}=(1,m,n,1)$ 
(\Cref{fig:complete_graph_three_loops}). 
The reduced invariant then consists of
the spectrum, the pure and mixed two-loop products, and the
$\binom{\size-1}{2}$ triangle products
$\Pi\bigl((1,m,n,1);\calA(\xx;H)\bigr)$, with no loop of length greater than
three.

This case is the relevant one in practice: for randomly sampled families, all coupling entries are non-zero almost
surely, so the interaction graph $\graph(\calA(\xx;H))$ is almost surely
complete, so the fundamental cycles can be taken to be triangles.
\begin{figure}[htbp]
\centering
\begin{tikzpicture}[scale = 0.9,
  vertex/.style={circle, draw=black, thick, fill=white,
                 minimum size=8pt, inner sep=0pt},
  treedge/.style={red, ultra thick},
  chord/.style={blue, dashed, thick},
  cyc/.style={green!60!black, line width=3pt, opacity=0.35},
  panellbl/.style={font=\small\bfseries},
  framed/.style={draw=black!30, rounded corners=2pt, inner sep=8pt}
] 
\node[framed, minimum width=5cm, minimum height=4cm] (boxC) at (0,0) {};
\begin{scope}[shift={(0,0)}]
    \foreach \i in {1,...,5} {
    \node[vertex] (v\i) at ({90-(\i-1)*72}:1.35) {};
    \node[font=\scriptsize] at ({90-(\i-1)*72}:1.7) {$\i$};
  }
  \node[vertex, fill=red!15] at (v1) {};

  \draw[chord] (v2) -- (v3);
  \draw[chord] (v2) -- (v4);
  \draw[chord] (v2) -- (v5);
  \draw[chord] (v3) -- (v4);
  \draw[chord] (v3) -- (v5);
  \draw[chord] (v4) -- (v5);

  \draw[cyc] (v1) -- (v2);
  \draw[cyc] (v2) -- (v3);
  \draw[cyc] (v3) -- (v1);

  \draw[treedge] (v1) -- (v2);
  \draw[treedge] (v1) -- (v3);
  \draw[treedge] (v1) -- (v4);
  \draw[treedge] (v1) -- (v5);
\end{scope}
\node[panellbl, anchor=south west] at ([xshift=3pt,yshift=3pt]boxC.south west) {$K_5$};
\end{tikzpicture}
\vskip 0.2cm
\caption{
Complete graph $K_5$ with the canonical star spanning tree rooted at vertex 1. Tree edges are shown in solid red, non-tree edges in dashed blue, and one fundamental triangle $(1,2,3,1)$ is highlighted in green. For any $K_\size$, every non-tree edge $\{m,n\}$ (where $2\leq m<n\leq\size$) generates a corresponding triangle $(1,m,n,1)$.
}
\label{fig:complete_graph_three_loops}
\end{figure}
\end{remark}

\section{Examples and Refinements}
\label{sec:examples}
This section refines the theory of \Cref{sec:problem_setting} in two stages. We first establish its \emph{sharpness}: each component of the invariant map and each structural hypothesis of \Cref{thm:complete_invariant_new} is necessary, shown through explicit counterexamples and accompanying numerical experiments. We then collect a series of \emph{refinements and extensions} of the basic theory:
\begin{list}{}{%
  \setlength{\leftmargin}{2em}
  \setlength{\labelsep}{0.5em}
  \setlength{\itemsep}{0.4em}%
  \let\makelabel\descriptionlabel}
  \item[\Cref{sec:necessity} (\emph{\nameref{sec:necessity}})] demonstrates the strict necessity of each invariant component, showing that fundamental cycles are required for $\size\geq 3$, and mixed 2-loops are required for $\dof\geq 2$. 
  \item[\Cref{section:connectedness} (\emph{\nameref{section:connectedness}})] illustrates domain connectedness issues via $C^{\infty}$ examples that are locally but not globally gauge equivalent when $\DomX$ disconnects, a defect we show is removable in the analytic category.
  \item[\Cref{subsec:exceptional_sets} (\emph{\nameref{subsec:exceptional_sets}})] formalizes the conditions under which exceptional sets (where assumptions of the main results are violated) do not affect the unique inversion.
  \item[\Cref{subsec:well_posedness} (\emph{\nameref{subsec:well_posedness}})] establishes a stability estimate demonstrating that on compact subdomains the inversion from a well-chosen invariant map is well-posed.
\end{list}

Some examples are theoretical; others are numerical, framed as a
gauge-class recovery optimization problem: a reference family $H_\star$ is
generated, and a trainable model $H_\theta$ is fit to its invariants by minimizing the mean-squared discrepancy
on a finite training set $\DomX_{\rm train}\subset\DomX$. We report two
quantities. The \emph{training RMSE} is the square root of the training
loss, 
\begin{equation*}
\rmse(\theta):=
    \Bigl(
        \tfrac{1}{|\DomX_{\rm train}|}
        \sum_{\xx\in\DomX_{\rm train}}
        \bigl\| \map_{\rm sub}(\xx;H_\theta)
              - \map_{\rm sub}(\xx;H_\star) \bigr\|^2
    \Bigr)^{1/2},
\end{equation*}
where $\map_{\rm sub}$ stacks the components of the selected invariant
subset. We use the nested subsets $\setspec$ (spectrum only),
$\setpp$ (adding pure 2-loops on all vertex pairs),
$\settwo$ (adding mixed 2-loops), and $\setfc$ (adding
fundamental-cycle products). The \emph{equivalence distance}, 
\begin{equation*}
    \dist(H_\theta, H_\star)
    :=
    \min_{\QO\in\ON}
    \max_{\xx\in\DomX_{\rm test}}
    \bigl\| H_\theta(\xx) - \QO^\trans H_\star(\xx)\,\QO \bigr\|_F ,
\end{equation*}
measures recovery of the gauge class on a test set $\DomX_{\rm test}$. 
A small $\rmse$ with $\dist=\mathcal{O}(1)$ signals that the recorded invariants do
not pin down the gauge class: the model reproduces the training data yet sits in
the wrong class. This happens either because the chosen invariant set is
incomplete (the necessity experiments of \Cref{sec:necessity}) or because a
hypothesis of \Cref{thm:complete_invariant_new} fails---for instance a
disconnected domain (\Cref{section:connectedness}). Both quantities reaching the
optimizer floor confirm exact recovery.

All experiments use PyTorch with L-BFGS optimizer. Unless noted otherwise, $100$ restarts with i.i.d.\ standard normal initialization, at most $3000$ iterations each,
$20$ training and $20$ test configurations; restarts with terminal loss above \num{1e-6} are discarded as spurious local minima. 

\subsection{Minimality of the invariants selection}
\label{sec:necessity}

The completeness theorem (\Cref{thm:complete_invariant_new}) shows that the full invariant map \emph{suffices} to identify a family up to gauge; here we show it is also \emph{minimal}, in that no single component can be dropped without losing completeness. 

The spectral and two-loop obstructions were already constructed as counterexamples in \Cref{sec:problem_setting}: the family in \eqref{eq:spectrum_incomplete} is everywhere isospectral to a constant diagonal family, but its eigenframe rotates with the configuration variable and cannot be fixed by a single constant gauge; by \Cref{ex:berry_3x3_counterexample}, two families can share the same spectrum and 2-loops yet differ by a cycle sign that only a fundamental-cycle product detects. 
We next recast \Cref{ex:berry_3x3_counterexample} as a gauge-recovery optimization problem, showing that in practice, two-loop data are insufficient for $\size\ge 3$. 

\begin{example}[Fundamental cycles are needed for $\size\geq 3$]
\label{ex:cycle_parity_obstruction}

Let $\DomX = (-1, 1)$. 
We generate a reference family $H_{\star}(x) = R_{\star}(x)^\trans \Lambda_{\star}(x) R_{\star}(x)$, where the spectrum $\Lambda_{\star}(x)$ has positive gaps ($\lambda_1=p_1(\xx)$,
$\lambda_{n+1}=\lambda_n+\exp(p_{n+1}(\xx))$) 
and the
eigenframe $R_{\star}(x) = \exp(x\Omega_{\star})$ uses a fixed generator 
$\Omega_{\star} \in \SkewHN$ whose upper-triangular entries are
drawn i.i.d.\ from the standard normal distribution. 
We fit it with a parameterized model $H_\theta(x) = R_\theta(x)^\trans \Lambda_\theta(x) R_\theta(x)$ of identical structure. 
Both the eigenvalue gap parameters in $\Lambda_\theta(x)$ and the upper-triangular entries of $\Omega_\theta \in \SkewHN$ are fully trainable. The results, reported in \Cref{tab:cycle_parity_training_obstruction}, show that,
on $\setspec$ or $\setpp$ data, the optimizer drives the $\rmse$ to the
numerical floor while the equivalence distance stays at $\mathcal{O}(1)$. 
Adding the fundamental-cycle products removes 
these traps, confirming that the cycle products are
needed to resolve the residual sign ambiguity for $\size \ge 3$.

\begin{table}[htbp]
\centering
\small
\caption{Optimization-based verification of the cycle-parity obstruction (see
\Cref{ex:cycle_parity_obstruction}). Here $\dof=1$, so the mixed 2-loops are
empty and $\settwo$ coincides with $\setpp$; only the latter is shown. The
incomplete sets $\setspec$ and $\setpp$ reach the training $\rmse$ floor but
fail to recover the gauge class, retaining an $\mathcal{O}(1)$ equivalence
distance; adding the fundamental-cycle products ($\setfc$) drives both
quantities to the numerical floor.}
\label{tab:cycle_parity_training_obstruction}
\begin{tabular}{ccccc}
\toprule
 & & \multicolumn{3}{c}{Invariant set} \\
\cmidrule(lr){3-5}
$N$ & Metric & \setspec & \setpp & \setfc \\
\midrule
3 & $\rmse$ & \num{1.5e-10} & \num{2.8e-09} & \num{1.8e-09} \\
  & $\dist$ & \num{3.4e+00} & \num{1.3e+00} & \num{1.4e-08} \\
\addlinespace
4 & $\rmse$ & \num{1.4e-09} & \num{2.0e-09} & \num{1.9e-09} \\
  & $\dist$ & \num{4.5e+00} & \num{3.4e+00} & \num{1.8e-08} \\
\addlinespace
5 & $\rmse$ & \num{1.1e-09} & \num{1.4e-09} & \num{2.8e-09} \\
  & $\dist$ & \num{3.8e+00} & \num{2.7e+00} & \num{2.7e-08} \\
\addlinespace
6 & $\rmse$ & \num{2.7e-09} & \num{8.7e-10} & \num{4.9e-09} \\
  & $\dist$ & \num{9.1e+00} & \num{8.2e+00} & \num{2.0e-08} \\
\bottomrule
\end{tabular}
\end{table}
\end{example}

We next show that for $\dof\ge2$ the \emph{pure} 2-loops cannot align coupling signs across parameter directions, so the \emph{mixed} 2-loops are required to separate the equivalence classes. To isolate this cross-directional obstruction, we consider an example with $\size = 2$. In this case, the interaction graph contains no cycles. Consequently, any failure to recover the gauge must stem from signs left unaligned across directions. 

\begin{example}[Mixed 2-loops are needed for $\dof>1$]
\label{ex:pure-2-loops-are-insufficient}
Let $\Lambda=\operatorname{diag}(\lambda_1,\lambda_2)$ with $\lambda_1\neq\lambda_2$ and $\Omega\in\Skew{2}$. 
Define
$H_\pm(\xx) = R_\pm(\xx)^\trans \Lambda R_\pm(\xx)$ with
$R_\pm(\xx) = \exp(x_1\Omega \pm x_2\Omega)$. 
Both families share the constant
spectrum $\Lambda$ and have derivative couplings
$\calA_+^1 = \calA_-^1 = \Omega$, $\calA_+^2 = \Omega$, $\calA_-^2 = -\Omega$.
Their pure 2-loops match in every direction, so the spectrum together with
pure 2-loops cannot tell them apart. 
Yet a constant gauge $Q$ with
$H_- = Q^\trans H_+ Q$ must commute with $\Lambda$, hence
$Q = \operatorname{diag}(\sigma_1,\sigma_2)$, and $\calA_-^\mu = Q\calA_+^\mu Q$
demands $\sigma_1\sigma_2=1$ for $\mu=1$ but $\sigma_1\sigma_2=-1$ for $\mu=2$,
a contradiction. 

For the numerical test, we keep the same $\size=2$ structure and use a single
base generator with direction-dependent scalar weights. More precisely, we set $R_\star(\xx)=\exp(\Big(\sum_{\mu=1}^{\dof} x_\mu c^\star_\mu\Big)\Omega_\star)$,  
where the scalar coefficients $c^\star_\mu$ are sampled randomly and
$\Omega_\star$ is a nonzero skew-symmetric generator. 
The model $H_\theta$ uses the same ansatz, $R_\theta(\xx) = \exp(\Big(\sum_{\mu=1}^{\dof} x_\mu c^\theta_\mu\Big)\Omega_\theta)$, 
with the gap parameters of $\Lambda_\theta$, the scalar coefficients
$c^\theta_\mu$, and the generator $\Omega_\theta\in\Skew{2}$
fully trainable. 
\Cref{tab:pure_two_loop_M_training_obstruction} shows that training on
$\setspec$ or $\setpp$ drives the $\rmse$ to the numerical floor while the
equivalence distance stays large; adding the mixed 2-loops
($\settwo$) brings the distance to the floor as well, confirming that mixed
2-loops align signs across directions once $\dof \ge 2$. 

\begin{table}[htbp]
\centering
\small
\renewcommand{\arraystretch}{1.1}
\caption{Optimization-based verification of the cross-directional sign
obstruction for $\dof>1$ and $\size=2$ (see
\Cref{ex:pure-2-loops-are-insufficient}). The incomplete sets $\setspec$ and
$\setpp$ reach the training $\rmse$ floor but retain an $\mathcal{O}(1)$
equivalence distance, whereas $\settwo$ resolves the cross-directional sign and
drives both quantities to the numerical floor. Since $\size=2$, $\setfc$ coincides with $\settwo$; it is therefore not
shown separately.} 
\label{tab:pure_two_loop_M_training_obstruction}
\begin{tabular}{ccccc}
\toprule
 & & \multicolumn{3}{c}{Invariant set} \\
\cmidrule(lr){3-5}
$\dof$ & Metric & \setspec & \setpp & \settwo \\
\midrule
2 & $\rmse$ & \num{8.2e-10} & \num{4.3e-09} & \num{2.3e-09} \\
  & $\dist$ & \num{1.7e+00} & \num{1.7e+00} & \num{3.8e-09} \\
\addlinespace
3 & $\rmse$ & \num{1.2e-09} & \num{3.5e-09} & \num{9.9e-09} \\
  & $\dist$ & \num{3.4e+00} & \num{7.5e-01} & \num{8.3e-08} \\
\addlinespace
4 & $\rmse$ & \num{2.4e-09} & \num{5.1e-09} & \num{7.3e-09} \\
  & $\dist$ & \num{7.1e-01} & \num{7.0e-01} & \num{1.8e-08} \\
\bottomrule
\end{tabular}
\end{table}
\end{example}

Finally, we show that the pure 2-loops must be recorded even on non-edges. 

\begin{example}[Non-edge pure 2-loops are necessary]
\label{ex:edge_detection_needed}
Let $\size=3$, $\dof=1$, $\varepsilon\neq0$, and let $\Lambda=\diag(\lambda_1,\lambda_2,\lambda_3)$ have distinct entries.
Consider the skew generators
\begin{equation*}
\Omega_1= \begin{pmatrix}0&1&0\\-1&0&1\\0&-1&0\end{pmatrix},
\qquad
\Omega_2=\begin{pmatrix}0&1&\varepsilon\\-1&0&1\\-\varepsilon&-1&0\end{pmatrix},
\end{equation*}
and set $H_i(x)=\exp(x\Omega_i)\Lambda\exp(-x\Omega_i)$ for $i=1,2$.
Since $\Lambda$ is constant and each $\Omega_i$ commutes with $\exp(x\Omega_i)$, the derivative coupling is constant, $\calA(\cdot;H_i)=\Omega_i$.
Thus $\graph(\calA(\cdot;H_1))$ is the tree with edges $\{1,2\}$ and $\{2, 3\}$, while $\graph(\calA(\cdot;H_2))$ carries the additional chord $\{1,3\}$.
The two families share the same constant spectrum $\Lambda$, and their generators agree on the tree edges, so the pure 2-loops on $\{1,2\}$ and $\{2,3\}$ coincide.
With a single direction there are no mixed 2-loops, and the tree $\graph(\calA(\cdot;H_1))$ has no fundamental cycles; hence every component of $\map_{\tree,\ell}$ agrees for $H_1$ and $H_2$ \emph{except} the non-edge pure 2-loop on $\{1,3\}$, which is $0$ for $H_1$ and $\varepsilon^2$ for $H_2$.
Yet $H_1\not\sim H_2$, because the interaction graph is a gauge invariant and the two graphs differ by the edge $\{1,3\}$.

We recast this as a gauge-recovery optimization. Fixing the reference $H_1$, we
fit a model $H_\theta(x)=\exp(x\Omega_\theta)\,\Lambda\,\exp(-x\Omega_\theta)$ whose
generator $\Omega_\theta$ has trainable entries on $\{1,2\}$, $\{2,3\}$, and the
chord $\{1,3\}$, with chord magnitude $|\varepsilon_\theta|$. Without the non-edge
2-loops the chord is invisible to the loss: the $\rmse$ reaches the numerical floor 
while leaving $\dist=\mathcal{O}(1)$. Adding the non-edge 2-loop on $\{1,3\}$ forces
$\varepsilon_\theta^2\to0$ and drives both quantities to the floor
(\Cref{tab:nonedge_pure_twoloop_training}), confirming that $\map^{(2\rm p)}$ on
non-edges is indispensable.

\begin{table}[htbp]
\centering
\small
\caption{Training verification that non-edge pure 2-loops are needed (see
\Cref{ex:edge_detection_needed}). Without the non-edge two-loop, the model reaches small $\rmse$ in the
wrong gauge class ($\dist=\mathcal{O}(1)$); adding it recovers the class. 
The residual $\dist$ scales as \emph{square root} of the $\rmse$: the only invariant detecting the chord in this experiment is the non-edge pure 2-loop, which is quadratic in the chord amplitude and vanishes at the reference. In \Cref{subsec:well_posedness} we introduce extended cycle products which are \emph{linear} in the chord entry against a tree-path factor bounded below. With those included in the loss, \Cref{thm:full_stability} establishes a linear rate.}
\label{tab:nonedge_pure_twoloop_training}
\begin{tabular}{lccc}
\toprule
Invariant set & $\rmse$ & $\dist$ & $|\varepsilon_\theta|$ \\
\midrule
$\setpp$, edges only        & \num{2.1e-11} & \num{3.7e+00} & \num{2.7e+00} \\
$\setpp$, all vertex pairs  & \num{9.6e-09} & \num{4.2e-04} & \num{1.3e-04} \\
\bottomrule
\end{tabular}
\end{table}
\end{example}

\subsection{Necessity of connectedness}
\label{section:connectedness}

The connectedness of the admissible domain is equally critical. 
Global identifiability relies on both a non-degenerate family and a persistently connected interaction graph. 
If either fails, the domain can split into disjoint regions on which incompatible local gauges are glued across the gap.

\begin{example}[Pointwise connectedness is necessary]
\label{ex:pointwise_connectedness}
Let $\DomX=(-1,1)$, let $a\neq b$, and define the flat function $\rho(x)= e^{-1/x^2}$ for $x\neq 0$ and $\rho(0)=0$. 
Define the families
\begin{equation*}
H_1(x)= \begin{pmatrix} a & \rho(x)\\ \rho(x) & b \end{pmatrix}, \quad H_2(x)= \begin{pmatrix} a & \operatorname{sgn}(x)\rho(x)\\ \operatorname{sgn}(x)\rho(x) & b \end{pmatrix}.
\end{equation*}
Both families are $C^\infty$ and share the same simple eigenvalues for all $x\in(-1,1)$. 
For $x>0$, we have $H_2(x)=H_1(x)$; for $x<0$, we have $H_2(x)=D^\trans H_1(x)D$ with $D=\operatorname{diag}(1,-1)$. 
Their spectral and loop-product invariant therefore agree. 
At $x=0$, however, the coupling entry and all its derivatives vanish, so the graph disconnects. 
No single constant orthogonal matrix $Q$ satisfies $H_2(x)=Q^\trans H_1(x)Q$ globally on $(-1,1)$: for $x>0$ the identity $H_2=H_1$ forces $Q=\pm I$, while for $x<0$ the relation requires $Q=\pm D$, a contradiction. 
Thus \Cref{thm:complete_invariant_new} fails without pointwise connectedness, even for $C^\infty$ families. 
   
We defer the numerical verification of this trap to \Cref{ex:crossing_connected_A}, where it is reported together with the spectrum-crossing obstruction.
\end{example}

\begin{example}[Spectrum crossing with connected interaction graph]
\label{ex:crossing_connected_A}
Let $\DomX=(-1,1)$. 
Define $D = \operatorname{diag}(1,-1)$, $S = \begin{psmallmatrix} 0 & 1 \\ 1 & 0 \end{psmallmatrix}$, and use the same flat function $\rho(x) = e^{-1/x^2}$ with $\rho(0)=0$. 
Consider the $C^\infty$ families 
\begin{equation*}
  H_1(x) = \begin{pmatrix} x & \rho(x) \\ \rho(x) & -x \end{pmatrix}, \quad 
H_2(x) = \begin{pmatrix} x & \operatorname{sgn}(x)\rho(x) \\ \operatorname{sgn}(x)\rho(x) & -x \end{pmatrix}  
\end{equation*}
Both families share the exact eigenvalues $\lambda_\pm(x) = \pm\sqrt{x^2+\rho(x)^2}$, which are simple on the good set $\DomX_0 = (-1,0) \cup (0,1)$ with a degenerate crossing at $x=0$. 
The graph stays connected on $\DomX_0$, yet the crossing separates it into two components. 
As in \Cref{ex:pointwise_connectedness}, this permits incompatible local gauges ($I$ and $D$), so no global constant gauge exists.

We again frame gauge recovery as an optimization problem. In both cases we set $H_\star=H_1$ and fit it with $H_\theta$ of the same structure, where the sign of the off-diagonal entry is a separate trainable variable $s_\pm\in\{\pm1\}$ on each branch ($x<0$ and $x>0$). 
As reported in \Cref{tab:domain_separation_training}, the optimizer drives the $\rmse$ to zero in every case but, the domains being separated, glues incompatible local gauges, leaving the equivalence distance away from zero. 
This confirms that without pointwise connectedness or across a separating crossing, the complete invariant cannot determine the global gauge class.

\begin{table}[htbp]
\centering
\small
\renewcommand{\arraystretch}{1.15}
\caption{Domain-separation obstructions (\Cref{ex:pointwise_connectedness,ex:crossing_connected_A}). Trained on the complete invariant, the optimizer drives the $\rmse$ to zero but leaves the equivalence distance $\approx 0.93$: the gauge between disconnected components is not fixed.}
\label{tab:domain_separation_training}
\begin{tabular}{lcc}
\toprule
Target reference & Metric & \setpp \\
\midrule
Graph disconnect ($a=0, b=1$) & $\rmse$ & \num{4.2e-13} \\
 & $\dist$ & \num{9.3e-01} \\
\addlinespace
Graph disconnect ($a=0, b=2$) & $\rmse$ & \num{1.1e-13} \\
 & $\dist$ & \num{9.3e-01} \\
\addlinespace
Graph disconnect ($a=-1, b=1$) & $\rmse$ & \num{1.3e-11} \\
 & $\dist$ & \num{9.3e-01} \\
\addlinespace
Spectrum crossing & $\rmse$ & \num{5.9e-11} \\
 & $\dist$ & \num{9.3e-01} \\
\bottomrule
\end{tabular}
\end{table}

\end{example}

\begin{remark}[Analyticity removes the obstruction]
\label{rem:analyticity_removes_gluing}
The obstruction relies on merely $C^\infty$ regularity, which lets the two local gauges be chosen independently across the disconnection; it does not survive in the analytic category. 
If $H_1,H_2$ are real-analytic and $H_2=Q_-^\trans H_1 Q_-$ on a nonempty open subinterval, the identity theorem extends this across the exceptional point to the whole connected domain.
The constructions above therefore rely on non-analytic flat functions; analytic families are immune, as the next example demonstrates. 
\end{remark}

\begin{example}[Disconnectedness for polynomial family]
\label{ex:spline_vs_poly_connectedness}

For $\size \ge 2$, let $\Lambda_\size = \operatorname{diag}(1, 2, \ldots, \size)$. Let $S_\size$ be tridiagonal with $(S_\size)_{i,i+1}=(S_\size)_{i+1,i}=1$, and let $r = x_1 + \cdots + x_\dof$. 
We define the reference family as $H_1^{(\size,\dof)}(\xx) = \Lambda_\size + r^2 S_\size$. 
At the hyperplane $\{r=0\}$---a codimension-one set that separates $\DomX$---the derivatives $\partial_{x_\mu}H_1^{(\size,\dof)} = 2r S_\size$ all vanish, so every coupling entry vanishes and the interaction graph disconnects. 
For any $r \neq 0$, the graph remains connected.
The alternating sign $P_\size=\diag(1,-1,\ldots)$,
which satisfies $P_\size^\trans S_\size P_\size=-S_\size$, is the local gauge
that the two sides of $\{r=0\}$ could in principle glue inconsistently; the
experiment tests whether the analytic (polynomial) structure rules this out.

We fit the reference on a finite sample of the good set $\{r\neq0\}$ by a
degree-$\le2$ polynomial model of the same ansatz space,
\begin{equation*}
    H_\theta(\xx) = \sum_{|\bm k|\le 2} A_{\bm k}\,\xx^{\bm k},
    \qquad A_{\bm k}\in\SymHN ,
\end{equation*}
where every coefficient matrix $A_{\bm k}$ is fully trainable and initialized
with i.i.d.\ standard-normal entries (symmetrized). 
The model is therefore not
constrained to the reference form $\Lambda_\size+r^2S_\size$: it may realize any
real-symmetric polynomial family of total degree at most two. 

As shown in \Cref{tab:polynomial_removable_connectedness_defect}, adding the
pure 2-loops ($\setpp$) collapses $\dist$ by four to nine orders of magnitude
relative to $\setspec$ for every $(\size,\dof)$; the mixed 2-loops ($\settwo$)
are reported for completeness, though it adds nothing here. Thus, although the interaction graph disconnects on the separating set $\{r=0\}$, the defect is removable in the analytic category: the model is polynomial, hence analytic, so the identity theorem (\Cref{rem:analyticity_removes_gluing}) forces agreement across $\{r=0\}$. This contrasts with the non-analytic $C^\infty$ constructions of \Cref{ex:pointwise_connectedness,ex:crossing_connected_A}, where the same separation leaves $\dist > 0$.

\begin{table}[htbp]
\centering
\small
\renewcommand{\arraystretch}{1.12}
\caption{Disconnecting reference $H_1^{(\size,\dof)}=\Lambda_\size+r^2S_\size$
fitted by a degree-$\le2$ polynomial model (see
\Cref{ex:spline_vs_poly_connectedness}). Adding the 2-loop invariants ($\setpp$,
$\settwo$) collapses the equivalence distance by four to nine orders of
magnitude relative to $\setspec$, confirming that the disconnection at $r=0$ is
removable in the analytic category.} 
\label{tab:polynomial_removable_connectedness_defect}
\begin{tabular}{cc cccc}
\toprule
 & & & \multicolumn{3}{c}{Invariant set} \\
\cmidrule(lr){4-6}
$\dof$ & $\size$ & Metric & \setspec & \setpp & \settwo \\
\midrule
1 & 2 & $\rmse$ & \num{6.1e-10} & \num{1.7e-09} & \num{1.7e-09} \\
  &   & $\dist$ & \num{2.1e+00} & \num{4.8e-09} & \num{4.8e-09} \\
\addlinespace
1 & 3 & $\rmse$ & \num{7.4e-10} & \num{1.8e-09} & \num{1.8e-09} \\
  &   & $\dist$ & \num{2.6e+00} & \num{6.2e-09} & \num{6.2e-09} \\
\addlinespace
1 & 4 & $\rmse$ & \num{5.5e-05} & \num{1.7e-08} & \num{1.7e-08} \\
  &   & $\dist$ & \num{2.6e+00} & \num{5.5e-07} & \num{5.5e-07} \\
\midrule
2 & 2 & $\rmse$ & \num{3.0e-10} & \num{2.9e-09} & \num{3.3e-09} \\
  &   & $\dist$ & \num{3.9e+00} & \num{8.8e-09} & \num{8.8e-09} \\
\addlinespace
2 & 3 & $\rmse$ & \num{6.2e-05} & \num{2.0e-09} & \num{6.2e-09} \\
  &   & $\dist$ & \num{4.5e+00} & \num{1.9e-08} & \num{6.4e-08} \\
\addlinespace
2 & 4 & $\rmse$ & \num{8.1e-05} & \num{1.6e-08} & \num{3.4e-07} \\
  &   & $\dist$ & \num{4.6e+00} & \num{6.3e-07} & \num{1.6e-05} \\
\midrule
3 & 2 & $\rmse$ & \num{9.5e-10} & \num{7.1e-09} & \num{7.9e-09} \\
  &   & $\dist$ & \num{4.9e+00} & \num{2.9e-08} & \num{2.6e-08} \\
\addlinespace
3 & 3 & $\rmse$ & \num{1.5e-06} & \num{9.8e-09} & \num{1.8e-08} \\
  &   & $\dist$ & \num{7.0e+00} & \num{1.5e-07} & \num{2.1e-07} \\
\addlinespace
3 & 4 & $\rmse$ & \num{2.8e-09} & \num{2.3e-08} & \num{1.5e-05} \\
  &   & $\dist$ & \num{1.0e+01} & \num{1.1e-06} & \num{9.4e-04} \\
\bottomrule
\end{tabular}
\end{table}
\end{example}

In summary, for general smooth families both hypotheses of \Cref{thm:complete_invariant_new}---non-degenerate and a connected interaction graph---are necessary: breaking either can disconnect the domain and open a gauge-gluing trap. 
The trap can be avoided in two complementary ways. 
If the family is real-analytic, the identity theorem propagates a local gauge across the exceptional set even when that set separates the domain, as the polynomial example above shows. 
If the family is only smooth, removability instead requires that the good set remain path-connected and dense; we formalize this condition in \Cref{subsec:exceptional_sets}.

\subsection{The exceptional set}
\label{subsec:exceptional_sets}

\Cref{thm:complete_invariant_new} assumes a non-degenerate spectrum and a connected interaction graph everywhere. 
In practice, however, systems often exhibit isolated degeneracies or points where the graph disconnects. 
We ask whether such points destroy identifiability or can be resolved by continuity.

Let $\mathcal E\subset\DomX$ be an ``exceptional set'' where those assumptions fail and let $\DomX_0 := \DomX\setminus\mathcal E$. 
The exceptional set is harmless whenever the theorem's hypotheses hold on $\DomX_0$ and $\DomX_0$ remains path-connected and dense: the local gauges then patch and extend across $\mathcal E$. 

\begin{corollary}[Removable exceptional sets]
\label{cor:removable_exceptional_sets}
Let $\DomX\subset\mathbb R^\dof$ be open and connected, and let $\mathcal E\subset\DomX$ be closed. 
Assume the complement $\DomX_0 := \DomX\setminus \mathcal E$ is dense in $\DomX$ and path-connected. 
Let $H_1,H_2\in C^1(\DomX,\SymHN)$. If $H_1$ and $H_2$ satisfy the hypotheses of \Cref{thm:complete_invariant_new} on the reduced domain $\DomX_0$ with identical invariants $\map_{\tree, \ell}(\xx;H_1) = \map_{\tree, \ell}(\xx;H_2)$ for all $\xx \in \DomX_0$, then there exists a global constant orthogonal matrix $\QO\in\ON$ such that
\begin{equation*}
    H_2(\xx)=\QO^\trans H_1(\xx)\QO, \qquad \forall\,\xx\in\DomX.
\end{equation*}
\end{corollary}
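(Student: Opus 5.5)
The plan is to apply \Cref{thm:complete_invariant_new} on the reduced domain and then extend by continuity. The one delicate point is that the theorem is stated for a configuration domain, that is, a connected open set, so we must check that $\DomX_0$ qualifies.

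First, $\DomX_0 = \DomX\setminus\mathcal E$ is open, because $\mathcal E$ is closed (relative to $\DomX$) and $\DomX$ is open. It is connected because it is path-connected by hypothesis. The restrictions $H_i|_{\DomX_0}$ belong to $C^1(\DomX_0,\SymHN)$, and by assumption they are non-degenerate on $\DomX_0$, with $\graph(\calA(\xx;H_1))$ connected for every $\xx\in\DomX_0$. The pointwise derivative couplings, and hence $\map_{\tree,\ell}$, are local objects defined through eigenframes on neighbourhoods of each point. Their values on $\DomX_0$ therefore coincide with those computed for the restricted families, since any neighbourhood can be shrunk inside the open set $\DomX_0$.

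With this in place, \Cref{thm:complete_invariant_new} applies verbatim on $\DomX_0$ and yields a constant $\QO\in\ON$ with $H_2(\xx)=\QO^\trans H_1(\xx)\QO$ for all $\xx\in\DomX_0$. The gluing argument of Step 6 of that proof only needs connectedness of the domain, which is exactly why path-connectedness of $\DomX_0$ is assumed.

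Next we extend the identity to $\DomX$. The map $\xx\mapsto H_2(\xx)-\QO^\trans H_1(\xx)\QO$ is continuous on $\DomX$, since $H_1,H_2\in C^1(\DomX,\SymHN)$ and $\QO$ is constant. It vanishes on $\DomX_0$, which is dense in $\DomX$. A continuous function vanishing on a dense set vanishes identically, so the identity holds on all of $\DomX$, including $\mathcal E$.

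The only step needing genuine care is the first: making sure the hypotheses of \Cref{thm:complete_invariant_new} hold on $\DomX_0$ as a configuration domain in its own right. The extension step is then routine.
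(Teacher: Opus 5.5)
Your proposal is correct and matches the paper's proof: apply \Cref{thm:complete_invariant_new} on the connected set $\DomX_0$ to obtain a constant $\QO$, then extend $H_2=\QO^\trans H_1\QO$ from the dense set $\DomX_0$ to all of $\DomX$ by continuity of $H_1,H_2$. Your additional checks, that $\DomX_0$ is open and that the pointwise invariants are unchanged by restriction, are details the paper leaves implicit.
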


\begin{proof}
Applying \Cref{thm:complete_invariant_new} directly to the connected domain $\DomX_0$ yields a constant orthogonal matrix $\QO\in\ON$ such that $H_2(\xx)=\QO^\trans H_1(\xx)\QO$ for all $\xx\in\DomX_0$. Since $\DomX_0$ is dense in $\DomX$ and the families $H_1, H_2$ are continuous, this equality uniquely extends to the closure, holding for all $\xx\in\DomX$.
\end{proof}

\begin{remark}[Exceptional sets are generically codimension-two]
\label{rem:codim_two_exceptional_sets}
What matters is not the exceptional points themselves but whether removing them disconnects the domain: a set of codimension two or higher leaves the complement dense and path-connected, so gauges continue around it, whereas a codimension-one set can separate it.

For spectrum degeneracies this is favourable. Near a simple two-level crossing, subtracting the mean of the two eigenvalues puts the effective $2\times2$ block in the trace-free form
$\begin{psmallmatrix} \alpha(\xx) & \beta(\xx) \\ \beta(\xx) & -\alpha(\xx) \end{psmallmatrix}$,
with gap $2\sqrt{\alpha(\xx)^2+\beta(\xx)^2}$; the crossing imposes the two conditions $\alpha=\beta=0$, already codimension two. More generally, let $\mathbb{E}_{k_1,\ldots,k_L}$ denote the real-symmetric matrices with $L\le\size$ distinct eigenvalues of multiplicities $k_1,\ldots,k_L\in\Z_{\ge1}$; its codimension in $\SymHN$ is
\begin{equation}\label{eq:codim_of_degenerates}
    \operatorname{codim}(\mathbb{E}_{k_1,\ldots,k_L})
    = \frac{1}{2}\sum_{i=1}^L (k_i+2)(k_i-1)
\end{equation}
\citep{keller2008multiple,vonneumann1929verhalten,arnold1971matrices}. Hence no generic spectral degeneracy is codimension-one, and the simple crossing $\mathbb{E}_{2,1,\ldots,1}$ already realizes the minimal codimension, two.

It should be possible to make these formal ideas precise using transversality \citep{golubitsky1973stable}: For a smooth family $H:\DomX\to\SymHN$, the degeneracy locus is the preimage $\mathcal E=H^{-1}(\mathbb{E}_{k_1,\ldots,k_L})$; when $H$ is transverse to $\mathbb{E}_{k_1,\ldots,k_L}$ its codimension is preserved, $\operatorname{codim}(\mathcal E)=\operatorname{codim}(\mathbb{E}_{k_1,\ldots,k_L})\ge2$. Transversality holds for generic $H$, so a codimension-one degeneracy is non-transverse and unstable: an arbitrarily small perturbation of $H$ removes it. For $\dof\ge2$ a generic crossing therefore does not separate the domain and is removable.

The codimension-one obstructions of \Cref{section:connectedness} are exactly such non-generic, fine-tuned configurations. Forcing a crossing into a single parameter (\Cref{ex:crossing_connected_A}) makes $\{x=0\}$ an isolated separating point; engineering every coupling to share a common factor (\Cref{ex:spline_vs_poly_connectedness}) disconnects the interaction graph on the hyperplane $\{r=0\}$---a graph disconnection rather than a spectral degeneracy, but a separating codimension-one set all the same. Either way the components may adopt incompatible gauges, unless the family is analytic, when the gauge extends across the separation regardless (\Cref{rem:analyticity_removes_gluing}).
\end{remark}




In summary, the non-degeneracy and connectedness hypotheses of \Cref{thm:complete_invariant_new} are needed only across exceptional sets that separate the domain, which is non-generic, and even then only for families that are not analytic.

\subsection{Well-posedness on compact subsets}
\label{subsec:well_posedness}

While the completeness theorem establishes exact identifiability, practical applications benefit from a stability result. Such a result requires replacing the qualitative assumptions of \Cref{thm:complete_invariant_new} with quantitative ones. We next explain the resulting changes to the invariant construction and assumptions.

First, we restrict inversion to a compact set $K\subset\DomX$. We will assume that it is rectifiably path-connected with finite intrinsic diameter. That is, there exists $L_K < \infty$ such that any two points in $K$ can be joined by a rectifiable path in $K$ of length at most $L_K$.

Let $H_1\in C^1(\DomX,\SymHN)$ be a non-degenerate reference family and assume, as before, that the pointwise aggregate interaction graph $\graph(\calA(\xx;H_1))$ is connected for every $\xx\in K$. By compactness of $K$, the pointwise assumptions yield three uniformity parameters on $K$:
First, we have a positive spectral gap
\begin{equation} \label{eq:stab:gap}
    g_K := \inf_{\xx\in K} \min_{m\neq n} |\lambda_m(\xx;H_1)-\lambda_n(\xx;H_1)| > 0.
\end{equation}
Second, we have a finite upper bound
\begin{equation} \label{eq:stab:upperbnd}
    B_K := \|H_1\|_{C^0(K)} + \max_{\mu=1,\ldots,\dof} \|\calA^\mu(\cdot;H_1)\|_{C^0(K)} < \infty.
\end{equation}
Third, on each edge of the chosen spanning tree family $\tree(\xx) = (\Verti, \edge(\tree(\xx)))$ the chosen active direction $\ell_e(\xx)$ carries a connection that is uniformly bounded below on $K$: we require a positive constant $\kappa_{K}(H_1)$ such that
\begin{equation} \label{eq:kappa_dir_lower_bound}
    \bigl|\calA^{\ell_e(\xx)}_{e}(\xx;H_1)\bigr| \ge \kappa_{K}(H_1) > 0, \qquad \xx\in K,\ \ e\in\edge(\tree(\xx)).
\end{equation}
While \eqref{eq:stab:gap} and \eqref{eq:stab:upperbnd} are always satisfied, the third requirement \eqref{eq:kappa_dir_lower_bound} requires a careful construction of the spanning tree family and active directions; we explain in \Cref{rem:kappa_construction} how this can be guaranteed. 

The stability analysis uses a variant of the complete invariant of \Cref{subsec:complete_invariant}, differing from it in the fundamental-cycle products. The spectral, pure 2-loops, and mixed 2-loops are as in \Cref{subsec:complete_invariant}. Here, the fundamental-cycle products attach to each non-tree vertex pair $\{p,q\}\notin\edge(\tree(\xx))$, whether or not it is an edge of $\graph(\calA(\xx;H))$, the tree path $\{f_1,\ldots,f_k\}$ joining $p$ to $q$, and to each direction $\mu$ the labelled cycle in which the tree-path edges carry their active labels and the chord carries direction $\mu$, $\bgamma_{\{p,q\}}^{\mu} := \bigl((f_1,\ell_{f_1}(\xx)),\ \ldots,\ (f_{k},\ell_{f_{k}}(\xx)),\ (\{p,q\},\mu)\bigr)$, and records the corresponding cycle products
\begin{equation*}
    \map^{(\mathrm f)}_{\tree,\ell}(\xx;H) := \Bigl(\,\Pi\bigl(\bgamma_{\{p,q\}}^{\mu};\calA(\xx;H)\bigr)\,\Bigr)_{\substack{\{p,q\}\notin\edge(\tree(\xx))\\ \mu=1,\ldots,\dof}} .
\end{equation*}
If $\{p,q\}$ is absent from the aggregate graph of $H$, then all these entries vanish and the corresponding product is zero; recording it nonetheless is what forces a competitor's coupling to vanish on non-edges (see \Cref{lem:stab_local_reconstruction}). We denote the resulting invariant by $\map_{\tree,\ell}$.

To measure recovery of the gauge class we use the equivalence distance on $K$, 
\begin{equation*}
    \dist_K(H_1,H_2) := \inf_{\QO\in\ON}\sup_{\xx\in K}\|H_2(\xx)-\QO^{\trans}H_1(\xx)\QO\|_F, 
\end{equation*}
and to measure agreement of the invariants we use the discrepancy 
\begin{equation*}
    \Delta_K(H_1,H_2) := \max_{\xx\in K}\|\map_{\tree,\ell}(\xx;H_1)-\map_{\tree,\ell}(\xx;H_2)\|_\infty. 
\end{equation*}
$\Delta_K$ is the theoretical version of the training $\rmse$ from \Cref{sec:examples}. Both measure the same error in closely related ways: while $\rmse$ measures the root mean squared error over sampled training data $\DomX_{\rm train}\subset K$, $\Delta_K$ takes the maximum error over the entire set $K$. 

\begin{theorem}[Stability from the reduced loop discrepancy]
\label{thm:full_stability}
Let $\DomX, H_1$ satisfy the assumptions of \Cref{thm:complete_invariant_new} and let $K\subset\DomX$ be compact and rectifiably path-connected with path-length bound $L_K$. Let $\tree(\xx), \ell(\xx)$ be spanning trees and active direction labels such that \eqref{eq:kappa_dir_lower_bound} is satisfied. Then, there exist $C,\delta_0 > 0$ such that, if $H_2\in C^1(\DomX,\SymHN)$ and $\Delta_K(H_1,H_2) \le \delta_0$, then
\begin{equation}
    \dist_K(H_1,H_2) \le C\,\Delta_K(H_1,H_2).
\end{equation}
 The constants $C, \delta_0$ depend on $H_1, H_2$ only through $g_K, B_K, \kappa_K, L_K,\size,\dof$.
\end{theorem}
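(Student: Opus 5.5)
The plan quantifies each step of the proof of \Cref{thm:complete_invariant_new}. Write $\Delta:=\Delta_K(H_1,H_2)\le\delta_0$.

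\emph{Step 1: pointwise reconstruction of the coupling up to signs.} By Weyl-type control, the spectra of $H_2$ lie within $\Delta$ of those of $H_1$. For $\delta_0\le g_K/4$ this gives $H_2$ a gap at least $g_K/2$ on $K$. Fix $\xx\in K$ and any local eigenframes, and take a tree edge $e$ with active label $\ell=\ell_e(\xx)$. Equality up to $\Delta$ of the pure 2-loop gives $\bigl|(\calA^\ell_e(H_2))^2-(\calA^\ell_e(H_1))^2\bigr|\le\Delta$. Since $|\calA^\ell_e(H_1)|\ge\kappa_K$, there is a sign $s_e(\xx)$ with $|\calA^\ell_e(H_2)-s_e\calA^\ell_e(H_1)|\le\Delta/\kappa_K$. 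Dividing the mixed 2-loops by $\calA^\ell_e$ then gives $|\calA^\mu_e(H_2)-s_e\calA^\mu_e(H_1)|\lesssim\Delta/\kappa_K$ for all $\mu$, with constants depending on $B_K$.

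\emph{Step 2: absorbing the tree signs and controlling chords.} Since $\tree(\xx)$ is a tree, the edge signs $s_e$ can always be written as $\sigma_m\sigma_n$. Replacing the eigenframe of $H_2$ by $\eframe_2 S(\xx)$ makes $s_e=1$ on all tree edges. For a non-tree pair $\{p,q\}$ the extended cycle product is linear in the chord entry, namely $\Pi(\bgamma^\mu_{\{p,q\}})=P\cdot\calA^\mu_{qp}$. Here the tree-path factor satisfies $|P(H_1)|\ge\kappa_K^{k}$, and $P(H_2)=P(H_1)+O(\Delta)$ by Step 1. Dividing therefore yields $|\calA^\mu_{pq}(\xx;H_2)-\calA^\mu_{pq}(\xx;H_1)|\le C\Delta$. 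This holds also when $\{p,q\}$ is a non-edge of $H_1$, which is where the modified $\map^{(\mathrm f)}$ is used. The result is a pointwise frame choice in which $\|\calA^\mu(\xx;H_2)-\calA^\mu(\xx;H_1)\|\le C_1\Delta$ and $\|\Lambda_2-\Lambda_1\|\le\Delta$. This is \Cref{lem:stab_local_reconstruction}.

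\emph{Step 3: integration along paths.} Fix $\xx_0\in K$ and set $Q(\xx)=\eframe_1(\xx)\eframe_2(\xx)^\trans$, initially on a small ball. We have $H_2=Q^\trans(\eframe_1\Lambda_2\eframe_1^\trans)Q$, so $\|H_2-Q^\trans H_1Q\|\le\Delta$. As in \eqref{eq:real_rigidity_dQ}, $\partial_\mu Q=\eframe_1(\calA^\mu_1-\calA^\mu_2)\eframe_2^\trans$, hence $\|\partial_\mu Q\|\le C_1\Delta$. The main obstacle is that the aligned frames of Step 2 are defined only pointwise. To fix a continuous version, the plan is to cover a rectifiable path $\gamma$ of length at most $L_K$ from $\xx_0$ to $\xx$ by finitely many small balls. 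On each ball, continuous frames for $H_1$ and $H_2$ exist. The pointwise aligning sign $S(\xx)$ is then locally constant, because the tree-edge ratios are close to $\pm1$ and continuous. This is exactly Step 4 of the completeness proof, made robust by $\kappa_K$ and $C_1\Delta<1$.

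Across overlapping balls, $Q$ changes only by conjugation-invisible $\pm I$; this is Step 6 of the completeness proof. One point needs care: agreement only up to $O(\Delta)$ means uniqueness of the overlap gauge has to come from connectivity through the tree edges bounded below by $\kappa_K$. I would argue this as follows. Two aligned frames differ by a sign matrix $S$, and a tree edge flips sign unless $\sigma_m=\sigma_n$; since the entries are at least $\kappa_K\gg C_1\Delta$, a flip is impossible, so $S=\pm I$. Integrating along $\gamma$ then gives $\|Q(\xx)\mp Q(\xx_0)\|\le C_1\sqrt{\dof}\,L_K\Delta$. Taking $\QO=Q(\xx_0)$ yields
\begin{equation*}
\|H_2(\xx)-\QO^\trans H_1(\xx)\QO\|_F\le\Delta\sqrt{\size}+2B_K C_1\sqrt{\dof}\,L_K\Delta .
\end{equation*}
All constants depend only on $g_K,B_K,\kappa_K,L_K,\size,\dof$, with $\delta_0$ chosen so that $C_1\delta_0<\kappa_K/2$ and $\delta_0<g_K/4$.
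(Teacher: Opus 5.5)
Your proposal is correct and follows essentially the same route as the paper's proof: Steps 1--2 are the quantitative sign reconstruction of \Cref{lem:stab_local_reconstruction}, and Step 3 reproduces \Cref{lem:stab_overlap_compatibility,lem:pathwise_projective_constancy,lem:projective_to_global_distance}. The one organizational difference is that you reconstruct pointwise with the pointwise tree $\tree(\xx)$ and only afterwards prove local constancy of the sign via a frozen tree, whereas the paper first freezes $(\tree(\xx_0),\ell(\xx_0))$ and propagates the discrepancy bound to a ball by continuity (\Cref{lem:stab_freezing}). When you write out the local-constancy step, show that the sign of the tree-edge ratio is constant on a connected set: either the whole ball, using continuity of both couplings from the centre as the paper does, or the connected segments of the path $\gamma$ inside each ball. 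Your pointwise estimates hold only on $K$, and the intersection of a ball with $K$ may be disconnected.
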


We give the proof in \Cref{app:proof_reduction}.

\begin{remark}[The smallness threshold]
\label{rem:smallness_threshold}
Some of what the smallness of $\Delta_K \leq \delta_0$ ensures is elementary: it transfers the uniform spectral gap and the uniform upper bounds from $H_1$ to the competitor, so that $H_2$ is also non-degenerate on $K$ and admits ordered $C^1$ eigenframes with bounded derivative couplings. Beyond this, the assumption achieves two further things.

(i) It forces a uniform lower bound on the active couplings of $H_2$. Since the invariant records only the squares $(\calA_e^{\ell_e})^2$, a discrepancy below $\kappa_K^2$ leaves the active tree entries of $H_2$ bounded away from zero — the tree-activation assumption~(ii) of \Cref{lem:stab_local_reconstruction} — so that the sign of the ratio of the two active entries is well defined. This could in principle still be imposed as a ``generic'' structural assumption on $H_2$, but the smallness threshold guarantees it.

(ii) More significantly, the reconstruction error $C\Delta_K$ must satisfy $C\Delta_K<\kappa_K/2$, so that the local sign gauges cannot flip relative to one another on overlaps (\Cref{lem:stab_overlap_compatibility}). This condition compares the error with the activation modulus rather than transferring a modulus of $H_1$, and it cannot be traded for structural assumptions on $H_2$: the residual gauge group $\constS$ is discrete, so the correct branch is selected only once the discrepancy lies below the scale separating the branches. 
\end{remark}

\begin{remark}[Maximizing the connectivity modulus]
\label{rem:kappa_construction}
To realize the condition \eqref{eq:kappa_dir_lower_bound} we can employ a magnitude-based selection spanning tree. 
Let $\mathfrak \tree_\size$ denote the set of spanning trees on the
vertex set $\{1,\ldots,\size\}$, and define
\begin{equation*}
    \kappa(H_1)
    :=
    \inf_{\xx\in K}
    \max_{\tree\in\mathfrak \tree_\size}
    \min_{e \in \edge(\tree)}
    \left( \max_{\mu = 1, \dots, \dof} |\calA^\mu_{e}(\xx;H_1)| \right)
\end{equation*}
For each $\xx\in K$ the graph $\graph(\calA(\xx;H_1))$ is connected, so it admits
a spanning tree, and on each tree edge some direction carries a non-zero
coupling; hence the inner minimum over $e\in\edge(\tree)$ is positive for the
maximizing tree. Since the resulting function of $\xx$ is continuous and $K$ is
compact, $\kappa(H_1)>0$. 
Note that this provides a
magnitude-based way to choose a spanning tree, different from the lexicographic
selection in \Cref{rem:deterministic_map}: using $\max_\tree$ instead of a fixed
combinatorial rule makes the inner quantity continuous in $\xx$, and hence
uniformly positive on $K$.

The same construction selects, on each tree edge, a single active direction of
controlled strength. 
For $\xx\in K$, let $\tree(\xx)$ attain the inner maximum in
$\kappa(H_1)$. 
For each tree edge $e=\{i,j\}\in\edge(\tree(\xx))$, define
\begin{equation*}
    \ell_e(\xx)
    :=
    \min\Bigl\{
        \mu\in\{1,\ldots,\dof\}:
        |\calA^\mu_{ij}(\xx;H_1)|
        =
        \max_{1\le\nu\le\dof}|\calA^\nu_{ij}(\xx;H_1)|
    \Bigr\},
\end{equation*}
i.e. the lexicographically first direction realizing the largest coupling
magnitude on $e$. 
By the elementary norm equivalence
$\max_\mu|\calA^\mu_{ij}|\ge \dof^{-1/2}\bigl(\sum_\mu|\calA^\mu_{ij}|^2\bigr)^{1/2}$
on every tree edge of $\tree(\xx)$,
\begin{equation*}
    \inf_{\xx\in K}
    \min_{\{i,j\}\in\edge(\tree(\xx))}
    \bigl|\calA^{\ell_{ij}(\xx)}_{ij}(\xx;H_1)\bigr|
    \ge
    \frac{\kappa(H_1)}{\sqrt{\dof}}
    >
    0 ,
\end{equation*}
so \eqref{eq:kappa_dir_lower_bound} holds with
$\kappa_{K}(H_1)=\kappa(H_1)/\sqrt{\dof}$. 
This is the quantitative analogue, in the parameter direction, of the active-direction selection used to define the reduced invariant in \Cref{subsec:complete_invariant}.
\end{remark}

\begin{example}[The cycle sector cannot be dropped]
\label{eg:full_loop_and_separation}
The 2-loop sector already fixes the \emph{magnitude} of every coupling entry, since the pure 2-loop records $(\calA^\mu_{mn})^2$; the fundamental-cycle products then add only the residual sign.
Since these signs take only finitely many values, one might hope that, in the stable regime ($\Delta_K$ small), continuity and compactness suffice to pin them down from the spectral and 2-loop data alone, i.e.\ that $\dist_K(H_1,H_2)\le C\bigl(\Delta_K^{\rm spec}+\Delta_K^{(2)}\bigr)$.
We exhibit a counterexample family, which builds on \Cref{ex:edge_detection_needed}.

Let $\size=3$, $\dof=1$, $K=[-T,T]$, and let $\Lambda=\diag(\lambda_1,\lambda_2,\lambda_3)$ have distinct entries with minimal gap $g>0$. Set
\begin{equation*}
    \Omega_0=\begin{pmatrix}0&1&0\\-1&0&1\\0&-1&0\end{pmatrix},
    \qquad
    \Omega_\varepsilon=\begin{pmatrix}0&1&\varepsilon\\-1&0&1\\-\varepsilon&-1&0\end{pmatrix},
\end{equation*} 
and define the \emph{fixed} reference family and the parameterized competitors
\begin{equation*}
    H_1(t) = \exp(t\Omega_0)\,\Lambda\,\exp(-t\Omega_0),
    \qquad
    H_2^\varepsilon(t) = \exp(t\Omega_\varepsilon)\,\Lambda\,\exp(-t\Omega_\varepsilon),
    \qquad \varepsilon\in(0,1).
\end{equation*}
Since each $\Omega$ commutes with $\exp(t\Omega)$, the frames $\exp(t\Omega)$ are ordered eigenframes and the connections are constant, $\calA(\cdot;H_1)\equiv\Omega_0$ and $\calA(\cdot;H_2^\varepsilon)\equiv\Omega_\varepsilon$. The interaction graph $\graph(\calA(\cdot;H_1))$ is the tree with edges $\{1,2\}$ and $\{2,3\}$. Taking $\tree$ to be this tree and $\ell_e\equiv 1$ gives $\kappa_K=1$, while $g_K=g$, $B_K=\|\Lambda\|_F+\|\Omega_0\|$, and $L_K=2T$. All four moduli are independent of $\varepsilon$, so $H_1$ satisfies the hypotheses of \Cref{thm:full_stability}.

Evaluating the invariants exactly, the spectra agree and, with $\dof=1$, there are no mixed 2-loops; the only discrepancies occur on the non-edge $\{1,3\}$:
\begin{equation*}
\Delta_K^{\rm spec}=\Delta_K^{(2\rm m)}=0,
\qquad
\Delta_K^{(2\rm p)} = \varepsilon^2,
\qquad
\Delta_K^{(\mathrm f)} = \varepsilon .
\end{equation*}
Hence the total discrepancy is $\Delta_K(H_1,H_2^\varepsilon)=\varepsilon$, and a direct calculation shows that the true equivalence distance also scales linearly, $\dist_K(H_1,H_2^\varepsilon) = \Theta(\varepsilon)$.

Consequently, while $H_1$ remains fixed and $\epsilon \to 0$, we have $\Delta_K(H_1,H_2^\varepsilon)\to0$ as well as 
\begin{equation*}
    \frac{\dist_K(H_1,H_2^\varepsilon)}
         {\Delta_K^{\rm spec}+\Delta_K^{(2\rm p)}+\Delta_K^{(2\rm m)}}
    \;\asymp\; \frac{\varepsilon}{\varepsilon^2}
    \;\longrightarrow\;\infty .
\end{equation*}
Thus no estimate of the form $\dist_K\le C\bigl(\Delta_K^{\rm spec}+\Delta_K^{(2\rm p)}+\Delta_K^{(2\rm m)}\bigr)$ can hold on any neighbourhood of $H_1$. This is a structural effect: $H_2$ carries an extra coupling---the chord $\{1,3\}$---absent from the reference $H_1$, and the two-loop sector is blind to this new chord at linear order, detecting it only quadratically. This is why $\map^{(\mathrm f)}_{\tree,\ell}$ must be recorded on \emph{all} non-tree vertex pairs; it is the analytic counterpart of the square-root convergence rate observed numerically in \Cref{tab:nonedge_pure_twoloop_training}.
\end{example}

\begin{example}[Well-posedness: numerical illustration]
\label{ex:wellposedness_loss_eps}

Let $K=[-1,1]^\dof$ and consider dense systems with dimensions $\size\in\{3,4,5\}$ and $\dof\in\{1,2\}$. The reference family is parameterized as
\begin{equation*}
    H_\star(\xx)
    =
    \exp\Big(\textstyle\sum_{\mu=1}^\dof x_\mu\Omega_\star^\mu\Big)\,
    \Lambda_\star(\xx)\,
    \exp\!\Big(-\textstyle\sum_{\mu=1}^\dof x_\mu\Omega_\star^\mu\Big),
\end{equation*}
where each skew-symmetric generator $\Omega_\star^\mu\in\SkewHN$ is constructed with upper-triangular entries drawn independently from a standard normal distribution. The spectrum $\Lambda_\star(\xx)=\diag(\lambda_1,\ldots,\lambda_N)(\xx)$ is generated to ensure strictly positive eigenvalue gaps, with its polynomial coefficients also drawn from a standard normal distribution. We fit it with the trainable model $H_\theta$ adopting the identical analytic ansatz. The upper-triangular entries of its generators $\Omega_\theta^\mu\in\SkewHN$ and its spectral polynomial coefficients are fully trainable, initialized randomly from a standard normal distribution. 

\Cref{fig:three_loop_generic} exhibits a clear $\rmse$--error relationship
consistent with \Cref{thm:full_stability}: under the complete $\setfc$ set,
small $\rmse$ is accompanied by small equivalence distance, whereas the
incomplete $\settwo$ set produces restarts that reach the $\rmse$ floor while
retaining an $\mathcal{O}(1)$ distance.

\begin{figure}[htbp]
\centering
\includegraphics[width=\textwidth]{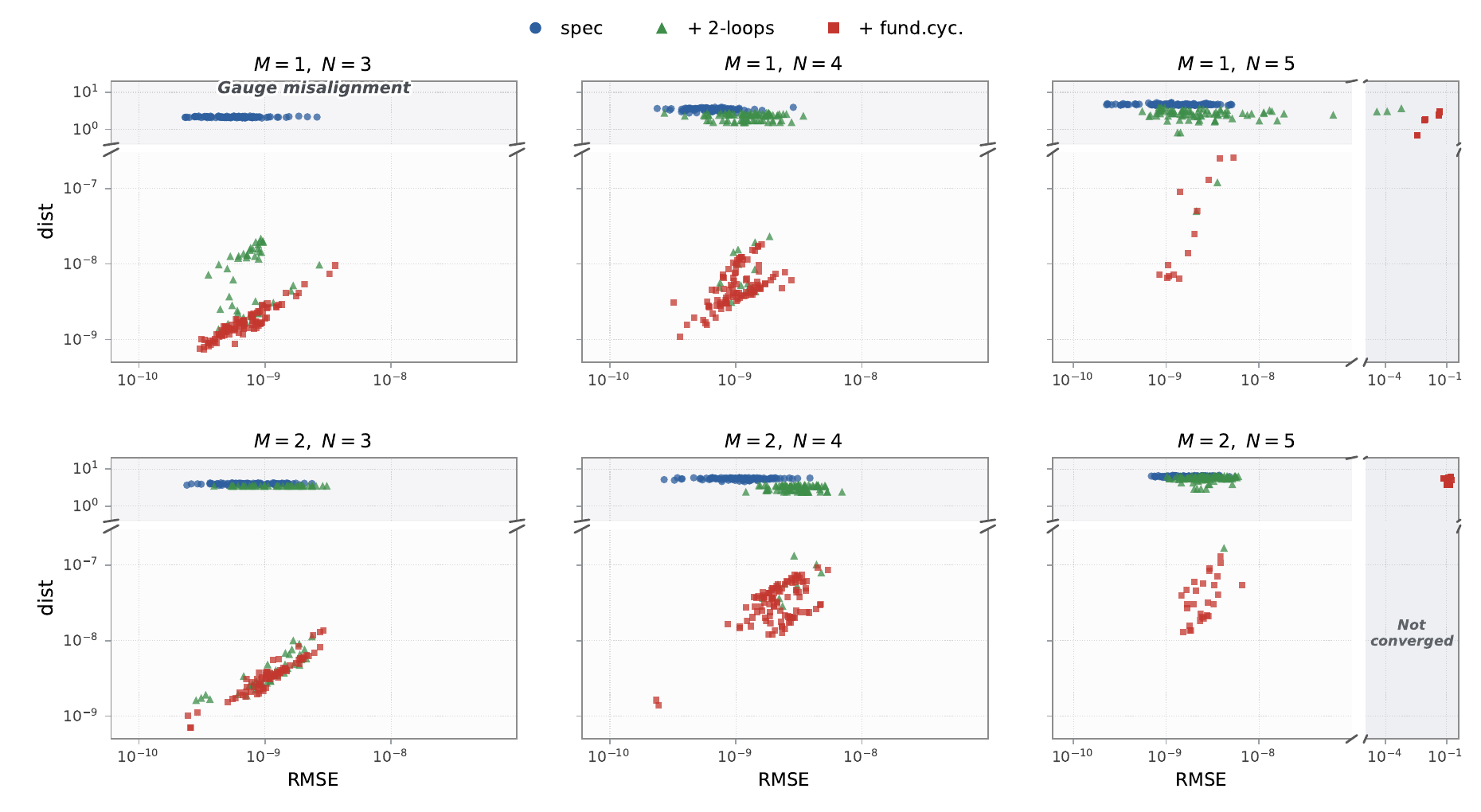}
\caption{Restart-level $\rmse$ versus equivalence distance $\dist$ (see
\Cref{ex:wellposedness_loss_eps}). Each point represents a single random
restart. Under the incomplete $\settwo$ invariant set, a band of restarts
attains the $\rmse$ floor at $\dist=\mathcal{O}(1)$, corresponding to spurious
sign branches. The complete $\setfc$ invariant set ensures that a vanishing
$\rmse$ guarantees a vanishing $\dist$.} 
\label{fig:three_loop_generic}
\end{figure}
\end{example}

\section{Conclusion}
\label{sec:conclusion}
In this paper we addressed an inverse problem: which computationally practical
observations of a parameterized matrix family determine it up to a global change
of basis. 
Our main result is that augmenting the pointwise spectrum with loop products of
the derivative couplings forms a complete invariant for the orthogonal gauge
(\Cref{thm:complete_invariant_new}), under non-degeneracy and connectivity
assumptions, and that the inversion is stable (\Cref{thm:full_stability}).
The invariant is computationally efficient in that it records only $\mathcal{O}(\dof\size^2)$ scalars per sampled configuration.

%

Three assumptions limit the scope of these results.
The first is that the model and the reference belong to the same admissible class. 
In practice a learned model $H_\theta$ and the data-generating family $H_\star$ may
differ in structure or even in dimension: the reference may be an
$\size_\star\times\size_\star$ family while the model uses $\size<\size_\star$
basis functions. 
The target is then an effective low-dimensional surrogate, and the loop-product
invariant already provides gauge-invariant features and a quotient distance for
comparing such surrogates without fixing a basis
(\Cref{subsec:relation_reduced_basis}); which gauge-invariant content survives a
projection, and how $\dist_K$ behaves under it, remains to be made precise. 

The second is that $\size$ and $\dof$ are implicitly assumed to be moderate. 
We have not studied the limit of large matrix dimension, and it is not the regime
this paper targets: the motivating application is dimension reduction, where
$\size$ is moderate by construction. 
%
%
The meaningful large-system limit is not $\size\to\infty$ with the family
unstructured, but one in which the dimension ``per particle'' stays fixed while
$\size$ and $\dof$ both grow with the number of particles, as for the
size-extensive bases discussed in \Cref{sec:introduction}. 
Formulating such a limit within the present framework, in theory and in practice
and in sufficient generality, is the key open problem we leave. 

The third is that we restricted this work to real symmetric families. The complex Hermitian case under the unitary gauge requires a genuine extension: the residual eigenframe freedom is the continuous group $\mathsf U(1)^\size$ instead of the discrete sign group $\constS$, so the first-order couplings no longer determine the gauge and second-order information must enter the invariant. We treat that case in a separate work.

Finally, the present results are theoretical: they settle what is identifiable from basis-independent observations and how stably. It remains to turn our proposed invariants into a practical loss for Hamiltonian learning (the application that motivates this work), understanding sample complexity, and comparing the resulting methods against established ones on realistic data.


\vskip 0.2cm

\noindent
{\bf Acknowledgements.} 
We acknowledge discussions with Juerong Feng, Benjamin Stamm, and Antoine Levitt. 
The project was initiated during a year-long research visit of DZ to the
University of British Columbia, supported by the China Scholarship Council (202406040146).
DZ was also supported by the National Natural Science Foundation of China (124B2020).
HC was supported by the National Natural Science Foundation of China (12371431). 
CO and BH were supported by the Natural Sciences and Engineering Research Council of Canada through NSERC Discovery grants and an NSERC-NSF Collaboration on quantum science and artificial intelligence grant. 
The research was also supported through computational resources and services provided by the Digital Research Alliance of Canada (alliancecan.ca) and by Advanced Research Computing at the University of British Columbia. 
CO is a partner in Symmetric Group LLP which licenses force fields commercially.

\appendix

\section{Relation with existing work}
\label{sec:related_work}

This section positions the loop-product invariant among several related
approaches to gauge freedom, Wannier gauges, and reduced representations
of parameter-dependent matrix families.

\subsection{Continuous trace-word invariants}
\label{sec:trace_word_relation}

We now relate the loop-product invariants to the classical trace-word perspective. 

Let $(A_1,\ldots,A_s)$ be a finite tuple of real-symmetric $\size\times\size$ matrices. 
The seminal work of Sibirskii and Procesi~\citep{sibirskii1968algebraic,procesi1976invariant} shows that this tuple is determined, up to simultaneous orthogonal conjugation, by trace words $ \operatorname{tr}(A_{i_1}A_{i_2}\cdots A_{i_k})$ for $i_j\in\{1,\ldots,s\}$. 
In characteristic zero, we can restrict to words of length at most $L_\size$, a universal bound for $\size\times\size$ matrices.
By the Procesi--Razmyslov bound one may take
$L_\size = \mathcal{O}(\size^2)$~\citep{razmyslov1974trace,procesi1976invariant}.
For a parameterized family $H:\DomX\to\SymHN$ and a subset $\Omega\subseteq\DomX$, we define the $k$-point continuous trace word as
\begin{equation*}
    T_k(\xx_1,\ldots,\xx_k;H) := \tr\bigl(H(\xx_1)H(\xx_2)\cdots H(\xx_k)\bigr), \qquad (\xx_1,\ldots,\xx_k)\in\Omega^k .
\end{equation*}
The corresponding trace-word map collects these words for all lengths up to $L_\size$:
\begin{equation*}
    \mathcal T_{\le L_\size,\Omega}(H) := \bigl(T_k(\cdot;H)\bigr)_{1\le k\le L_\size} \in \prod_{k=1}^{L_\size} C(\Omega^k,\R).
\end{equation*}
When $\Omega=\DomX$, we simply write $\mathcal T_{\le L_\size}(H) := \mathcal T_{\le L_\size,\DomX}(H)$. This trace-word map is clearly gauge-invariant; if $\QO \in \ON$ then  cyclicity of the trace ensures
\begin{equation*}
    T_k(\xx_1,\ldots,\xx_k; \QO^\trans H\QO)
    = 
    T_k(\xx_1,\ldots,\xx_k; H ). 
\end{equation*}
%
%
The converse—that it is in fact \emph{complete}—is the content of the next
result. 

\begin{proposition}[Completeness of continuous trace words]
\label{prop:continuous_trace_words_complete}
Let $H_1,H_2:\DomX\to\SymHN$. If $\mathcal T_{\le L_\size}(H_1) = \mathcal T_{\le L_\size}(H_2)$, then there exists a constant $\QO\in\ON$ such that
\begin{equation*}
    H_2(\xx)=\QO^\trans H_1(\xx)\QO \qquad \forall\,\xx\in\DomX .
\end{equation*}
\end{proposition}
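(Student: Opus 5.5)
The plan is to reduce the statement to the finite-tuple theorem of Sibirskii and Procesi cited just above. The main issue is that a family over a continuum is an infinite tuple, while that theorem concerns finite tuples. Let $W:=\operatorname{span}\{H_1(\xx):\xx\in\DomX\}\subseteq\SymHN$. Since $W$ is finite dimensional, we can choose finitely many points $\xx_1,\ldots,\xx_s\in\DomX$ such that $A_i:=H_1(\xx_i)$ form a basis of $W$. Set $B_i:=H_2(\xx_i)$. Every trace word in the $A_i$ of length at most $L_\size$ is a value of some $T_k$ at a tuple of the $\xx_i$, so equality of the trace-word maps gives equal trace words for $(A_i)$ and $(B_i)$. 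By the Sibirskii--Procesi result we then obtain a constant $\QO\in\ON$ with $B_i=\QO^\trans A_i\QO$ for all $i$.

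It remains to show that $\QO$ works at every $\xx$, not only at the sample points. Fix $\xx$ and add it as an extra matrix to the tuple: compare $(A_1,\ldots,A_s,H_1(\xx))$ with $(B_1,\ldots,B_s,H_2(\xx))$. This is cleanest done with a linear-algebra argument based on the Gram matrix rather than by re-invoking Procesi. Write $H_1(\xx)=\sum_i c_i A_i$; the coefficients $c_i$ are determined by the Gram matrix $G_{ij}=\tr(A_iA_j)$, which is invertible, and the vector $b_j=\tr(H_1(\xx)A_j)$. Both of these are values of $T_2$. The length-$2$ words give $\tr(B_iB_j)=G_{ij}$ and $\tr(H_2(\xx)B_j)=b_j$. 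They also give
\[
\|H_2(\xx)\|_F^2=\tr(H_2(\xx)^2)=\tr(H_1(\xx)^2)=\|H_1(\xx)\|_F^2 .
\]
Define $\tilde H:=\sum_i c_iB_i=\QO^\trans H_1(\xx)\QO$. Then $\tilde H$ is the orthogonal projection of $H_2(\xx)$ onto $\operatorname{span}\{B_i\}$, because it has the same inner products with every $B_j$. Its norm satisfies $\|\tilde H\|_F^2=c^\trans Gc=\|H_1(\xx)\|_F^2$. Hence $\|H_2(\xx)-\tilde H\|_F^2=\|H_2(\xx)\|_F^2-\|\tilde H\|_F^2=0$, so $H_2(\xx)=\QO^\trans H_1(\xx)\QO$. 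The same $\QO$ therefore works for every $\xx$, with no continuity needed.

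The main obstacle is therefore not the gluing step; the Gram argument handles it using only $T_2$. The real step is the finite-tuple completeness of trace words, which I import as a black box from Sibirskii--Procesi together with the length bound $L_\size$ of Procesi--Razmyslov. One point needs care: those results are stated for orthogonal conjugation of tuples of matrices, and in the orthogonal setting one normally includes words involving transposes. Here every matrix is symmetric, so transposed words reduce to ordinary words and the cited statement applies as written. If the bound $L_\size$ requires $L_\size\ge 2$, that is automatic for $\size\ge 2$. The case $\size=1$ is trivial, since then $T_1$ alone gives $H_2=H_1$.
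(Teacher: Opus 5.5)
Your proof is correct, and it takes a different route from the paper.

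\textbf{The paper's route.} It applies the finite-tuple theorem to every finite set $F\subset\DomX$. This gives nonempty closed sets $\mathcal Q_F\subset\ON$ of gauges that work on $F$. These sets have the finite intersection property, so compactness of $\ON$ yields a single $\QO$ in $\bigcap_F\mathcal Q_F$.

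\textbf{Your route.} You use the finite dimensionality of $\SymHN$ instead. You invoke Sibirskii--Procesi only once, on at most $\size(\size+1)/2$ points $\xx_i$ whose values $H_1(\xx_i)$ form a basis of $\operatorname{span}\{H_1(\xx):\xx\in\DomX\}$. You then extend the gauge to an arbitrary $\xx$ by a Gram/Pythagoras argument. That argument uses only the two-point words $T_2(\xx,\xx_j)$ and $T_2(\xx,\xx)$, together with the fact that conjugation by $\QO$ is linear and a Frobenius isometry. The extension step is sound. The matrix $\tilde H=\QO^\trans H_1(\xx)\QO$ lies in $\operatorname{span}\{B_j\}$ and has the same inner products with each $B_j$ as $H_2(\xx)$, so it is the orthogonal projection of $H_2(\xx)$. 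Equality of norms then forces $H_2(\xx)=\tilde H$.

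\textbf{What each buys.} Your version is sharper and constructive. The gauge is fixed by finitely many points, words longer than $2$ are needed only among those basis points, and no compactness or limiting argument is used. The paper's compactness argument does not use the linear or isometric structure of the action. It therefore transfers essentially verbatim to any compact group action for which a finite-tuple orbit-separation theorem is available.
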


\begin{proof}
Let $F=\{\xx^{(1)},\ldots,\xx^{(s)}\}\subset\DomX$ be an arbitrary finite set. 
Equality of continuous trace words implies that for every index sequence $i_1,\ldots,i_k\in\{1,\ldots,s\}$ ($1\le k\le L_\size$), we have
\begin{equation*}
    \tr\bigl(H_1(\xx^{(i_1)})\cdots H_1(\xx^{(i_k)})\bigr) = \tr\bigl(H_2(\xx^{(i_1)})\cdots H_2(\xx^{(i_k)})\bigr).
\end{equation*}
By the finite-dimensional trace-word theorem, the matrix tuples $\bigl(H_1(\xx^{(1)}),\ldots,H_1(\xx^{(s)})\bigr)$ and $\bigl(H_2(\xx^{(1)}),\ldots,H_2(\xx^{(s)})\bigr)$ are simultaneously conjugate. 
Thus, there exists $Q_F\in\ON$ such that $H_2(\xx)=Q_F^\trans H_1(\xx)Q_F$ for all $\xx\in F$.

For each finite $F\subset\DomX$, define the closed subset
\begin{equation*}
    \mathcal Q_F := \left\{ Q\in\ON : H_2(\xx)=Q^\trans H_1(\xx)Q \quad \forall\,\xx\in F \right\}.
\end{equation*}
We just showed that $\mathcal Q_F$ is nonempty. 
Furthermore, for any finite subsets $F_1,\ldots,F_m \subset \DomX$, their intersection $\bigcap_{j=1}^m \mathcal Q_{F_j} = \mathcal Q_{\bigcup F_j}$ is also nonempty. 
Therefore, the family $\{\mathcal Q_F\}$ has the finite intersection property. 
Since $\ON$ is compact, the global intersection over all finite $F\subset\DomX$ is nonempty, $\bigcap_{\substack{F\subset\DomX \\ F \text{ finite}}} \mathcal Q_F \neq\emptyset .$ 
Choosing $\QO$ in this intersection yields $H_2(\xx)=\QO^\trans H_1(\xx)\QO$ for all $\xx\in\DomX$.
\end{proof}

We now compare the trace-word invariant with the loop-product invariant on the
admissible class—families $H\in C^1(\DomX,\SymHN)$ that are non-degenerate with a
connected interaction graph at every $\xx\in\DomX$. 
On this class the reduced loop-product invariant is defined and complete
(\Cref{thm:complete_invariant_new}), and with the deterministic choices of
spanning tree and active directions (\Cref{rem:deterministic_map}) we denote it
simply by $\map(H)$. Under the conditions of \Cref{thm:complete_invariant_new}, both the trace words and the loop products are complete invariants, hence $\map(H_1)=\map(H_2)$ if and only if $\mathcal T_{\le L_\size}(H_1) = \mathcal T_{\le L_\size}(H_2)$. In that sense the two invariants are equivalent. Moreover, we show next how the pointwise loop product invariants can be constructed from tracewords. 




\begin{remark}[Pointwise differential content of trace words]
\label{rem:loop_products_from_trace_words}

For $H\in C^1(\DomX,\SymHN)$ with simple spectrum, the spectral projector onto the
eigenline of $\lambda_n(\xx)$ is
\begin{equation*}
    P_n(\xx) = \prod_{\ell\neq n} \frac{H(\xx)-\lambda_\ell(\xx)I}{\lambda_n(\xx)-\lambda_\ell(\xx)}
    = u_n(\xx)u_n(\xx)^\trans .
\end{equation*}
The eigenvalues are recovered from the diagonal trace words
$\tr\bigl(H(\xx)^r\bigr)=\sum_{n=1}^{\size}\lambda_n(\xx)^r$ via Newton's
identities, so each $P_n(\xx)$, being a polynomial in $H(\xx)$ with
trace-word coefficients, is itself determined by trace-word data.

Fix a $k$-loop $\bgamma=\bigl((e_1,\mu_1),\ldots,(e_k,\mu_k)\bigr)$ with
$e_r=\{m_r,m_{r+1}\}$ and $m_{k+1}=m_1$, and write
$J^\mu_{mn}(\xx;H):=\langle u_m(\xx),\partial_{x_\mu}H(\xx) u_n(\xx)\rangle$. 
Inserting $P_{m_r}=u_{m_r}u_{m_r}^\trans$ and using
$u_{m_r}^\trans u_{m_r}=1$ together with the cyclicity of the trace
($m_{k+1}=m_1$) gives
\begin{equation*}
    \tr\Big( P_{m_1}(\xx)\,\partial_{x_{\mu_1}}H(\xx) \cdots
              P_{m_k}(\xx)\,\partial_{x_{\mu_k}}H(\xx) \Big)
    = J^{\mu_1}_{m_1m_2}(\xx;H)\,J^{\mu_2}_{m_2m_3}(\xx;H)\cdots J^{\mu_k}_{m_km_1}(\xx;H).
\end{equation*}
Since $J^\mu_{mn}(\xx;H)=\bigl(\lambda_n(\xx)-\lambda_m(\xx)\bigr)\calA^\mu_{mn}(\xx;H)$
for $m\neq n$, dividing by the spectral gaps recovers the derivative-coupling loop
product,
\begin{equation*}
\begin{aligned}
    \Pi\bigl(\bgamma;\calA(\xx;H)\bigr)
    &= \Bigl[ \prod_{r=1}^k \bigl(\lambda_{m_{r+1}}(\xx)-\lambda_{m_r}(\xx)\bigr)^{-1} \Bigr] \times
      \tr\Big( \prod_{r=1}^k P_{m_r}(\xx)\,\partial_{x_{\mu_r}}H(\xx) \Big).
\end{aligned}
\end{equation*}
Each $P_{m_r}$ is a polynomial in $H(\xx)$, so the trace on the right is a fixed
parameter-derivative of the multi-point trace word
$T_k(\xx_1,\ldots,\xx_k;H)$ evaluated at $\xx_1=\cdots=\xx_k=\xx$. Thus multi-point
trace words contain every pointwise loop product.
\end{remark}

\Cref{rem:trace_words_from_loop_products_direct} establishes the reverse
direction: on the simple-spectrum, connected-graph class, the loop-product data
determine every continuous trace word. The construction integrates the
derivative-coupling connection into overlap matrices and inserts them into the
trace formula; the only freedom left by the loop products is an endpoint sign
gauge, which cancels inside every closed trace word. 

\begin{remark}[Computing trace words from loop products]
\label{rem:trace_words_from_loop_products_direct}

Let $H(\xx)=\eframe(\xx)\Lambda(\xx)\eframe(\xx)^\trans$ 
with $\Lambda(\xx)=\diag(\lambda_1(\xx),\ldots,\lambda_\size(\xx))$ and a $C^1$ ordered eigenframe
$\eframe=(u_1,\ldots,u_\size)$. The derivative couplings satisfy $\partial_{x_\mu}\eframe(\xx)=\eframe(\xx)\calA^\mu(\xx)$ and $\calA^\mu(\xx)=\eframe(\xx)^\trans\partial_{x_\mu}\eframe(\xx)$.  
Hence, if $\gamma:[0,1]\to\DomX$ is a piecewise $C^1$ path from
$\yy$ to $\xx$, the overlap matrix $W_\gamma(t):=\eframe(\yy)^\trans \eframe(\gamma(t))$ 
solves the linear equation 
\begin{equation*}
    \dot W_\gamma(t)
    =
    W_\gamma(t)
    \sum_{\mu=1}^{\dof}\dot\gamma_\mu(t)\calA^\mu(\gamma(t)),
    \qquad
    W_\gamma(0)=I .
\end{equation*}
Consequently,
\begin{equation*}
    W_\gamma(1)=\eframe(\yy)^\trans \eframe(\xx)
    =
    \mathcal P\exp\left(
        \int_\gamma \sum_{\mu=1}^{\dof}\calA^\mu\,dx_\mu
    \right),
\end{equation*}
where $\mathcal P$ denotes path ordering.

The pointwise loop products determine the sign-gauge orbit of the collection
$\{\calA^\mu(\xx)\}_{\mu=1}^{\dof}$. Thus the reconstructed connection is unique up to
\begin{equation*}
    \calA^\mu(\xx)\longmapsto S(\xx)\calA^\mu(\xx)S(\xx),
    \qquad
    S(\xx)=\diag(\sigma_1(\xx),\ldots,\sigma_\size(\xx)).
\end{equation*}
This changes the corresponding overlap matrix only by
endpoint signs, i.e., $\eframe(\yy)^\trans \eframe(\xx)
    \longmapsto
    S(\yy)\,\eframe(\yy)^\trans \eframe(\xx)\,S(\xx)$. 

Now fix $k$ parameter values
$\xx_1,\ldots,\xx_k\in\DomX$, and write $W_{r,r+1}:=\eframe(\xx_r)^\trans \eframe(\xx_{r+1})$ with $\xx_{k+1}:=\xx_1$.  
Inserting the spectral decomposition of each $H(\xx_r)$ gives the exact
formula
\begin{equation*}
    \begin{aligned}
    T_k(\xx_1,\ldots,\xx_k;H)
    &=
    \tr\bigl(H(\xx_1)\cdots H(\xx_k)\bigr)
    \\
    &=
    \tr\Bigl(
        \Lambda(\xx_1)W_{1,2}
        \Lambda(\xx_2)W_{2,3}
        \cdots
        \Lambda(\xx_k)W_{k,1}
    \Bigr)\\
    &= \sum_{n_1,\ldots,n_k}
    \prod_{r=1}^{k}
    \lambda_{n_r}(\xx_r)
    \bigl(W_{r,r+1}\bigr)_{n_r n_{r+1}},
    \qquad n_{k+1}:=n_1 .
\end{aligned}
\end{equation*}
This expression is independent of the sign choices in the eigenframes. 
Therefore the spectral data together with the loop-product data determine every
continuous trace word $T_k(\xx_1,\ldots,\xx_k;H)$.
\end{remark}

In summary, continuous trace words provide a basis-free complete invariant without requiring simple spectra or connected interaction graphs. 
However, they are multi-point and non-local objects: evaluating a $k$-point trace word on $s$ sampled configurations requires $\mathcal{O}(s^k)$ operations. 
In contrast, the loop-product invariant is strictly pointwise. 
Through spanning-tree reduction, its evaluation cost grows only linearly with the number of configurations and explicitly adapts to the interaction graph's sparsity.

\subsection{Cycle products in graph-gauge problems}
\label{subsec:relation_graph_gauge}

Our loop-product construction relies on a standard principle from graph-gauge theory: variables on individual edges are usually gauge-dependent, while their products along closed cycles are gauge-invariant. 
A well-known example is the theory of magnetic Schr\"odinger operators on discrete periodic graphs \citep{korotyaev2017magnetic}. 
There, a magnetic potential assigns a phase to each edge. 
Changing the phase at the vertices acts as a gauge transformation, modifying the edge phases without altering the physical magnetic operator. 
The true gauge-invariant quantities are the magnetic fluxes through closed cycles. 
By fixing a spanning tree, one can use a vertex gauge to set all tree-edge phases to zero; the fundamental cycles then capture all remaining physical information. 
These cycle fluxes are used to study how the magnetic field affects the spectrum, such as determining bandwidths.

Our loop products use this exact same cancellation mechanism, but for a different mathematical object. 
In our problem, the ``edge variables" are not magnetic phases, but the derivative-coupling entries $\calA^\mu_{mn}(\xx;H)$ between instantaneous eigenvectors. 
Changing the signs of these eigenvectors acts as a discrete vertex gauge:
\begin{equation*}
    \calA^\mu_{mn}(\xx;H) \longmapsto \sigma_m(\xx)\sigma_n(\xx)\calA^\mu_{mn}(\xx;H), \qquad \sigma_m(\xx)\in\{\pm1\}.
\end{equation*}
A single coupling entry changes sign under this gauge. 
However, the product of coupling entries around any closed loop is invariant, because every vertex sign appears exactly twice and squares to $1$. 
The spanning-tree reduction in \Cref{subsec:complete_invariant} is this graph-gauge principle adapted for our inverse problem. 
By choosing a spanning tree, the signs on the tree edges can always be absorbed by vertex signs. 
The only genuine sign obstructions are the holonomies around non-tree edges.
This is precisely what the fundamental cycle products measure. 

These are invariants of the eigenframe gauge --- the vertex signs above or their $\mathsf U(1)$ analogue in the self-adjoint case --- rather than of the ambient change of basis of \Cref{def:gauge_equivalence}; in this sense they are the classical predecessors of the loop products used in the present work.


\subsection{Relation with Wannier gauges}
\label{subsec:relation_wannier}

In electronic
structure, Bloch eigenstates are not uniquely determined by the Hamiltonian.
For a single isolated band, each Bloch state can be multiplied by a
$k$-dependent phase. 
For a composite group of bands, the freedom becomes a
$k$-dependent unitary rotation within the chosen band subspace.
The
construction of Wannier functions uses this gauge freedom to obtain localized
real-space orbitals, for instance through the maximally localized Wannier
function procedure \citep{marzari2012maximally}.

The common feature is the presence of an eigenframe gauge. 
Both settings
recognize that eigenvectors themselves are not physical observables. 
The
difference is the role played by the gauge. Wannier methods attempt to
choose a good gauge, usually a smooth and localized one, because the
resulting Wannier representation is meant to be used as a computational and
physical basis. 
By contrast, the present work does not attempt to select a
preferred eigenframe. 
Instead, it constructs quantities that are independent
of the eigenframe from the outset.

\subsection{Relation with reduced-basis models for parametric eigenproblems}
\label{subsec:relation_reduced_basis}
A further related direction is model order reduction for parametric eigenvalue problems. 
Recent work develops
certified projection-based reduced models for the smallest eigenvalue and its
eigenspace of large-scale parametric symmetric matrices
\citep{manucci2025certified}. 
%
Their construction uses weak greedy strategies,
spectral-gap estimates, and computable conditions ensuring that the
multiplicity of the smallest eigenvalue is captured in the reduced space.
A subsequent Taylor-reduced basis method \citep{stamm2026model} constructs a local
reduced space from derivatives of the spectral projector at a reference
parameter and relates this construction to multivariate analytic perturbation
theory. 
Related work \citep{garrigue2024reduced} studies
subspace-projection approximations of self-adjoint eigenvalue problems,
including degenerate cases and reduced spaces enriched by parameter
derivatives of eigenfunctions. 
\citet{manucci2024uniform} consider the uniform approximation of the smallest eigenvalue
of a large parameter-dependent symmetric matrix by a projected smaller
problem, using greedy subspace construction and proving uniform convergence
of the projected eigenvalue function. 

These works and the present paper concern parameter-dependent symmetric matrices, but the mathematical questions are different. 
Reduced-basis methods start from a large family $A(\xx)\in\C^{\size_{\rm full}\times \size_{\rm full}}$
and construct a low-dimensional subspace $V\in\C^{\size_{\rm full}\times r}$
so that the projected matrix $A_r(\xx):=V^\dagger A(\xx)V$ 
accurately approximates selected eigenvalues or eigenspaces. The main issues
are approximation error, certification, spectral gaps, degeneracies, and the
efficient assembly of the reduced space.

Our focus is instead an identifiability problem for a given finite-dimensional
matrix family. 
We ask when two families
represent the same physical object up to a constant orthogonal change of
basis. 
Nevertheless, the gauge viewpoint is relevant to reduced models as
well. If the reduced basis $V$ is replaced by another basis of the same
reduced subspace, $V\mapsto VU$ with $U\in\Ug{r}$, then $ A_r(\xx)
    \longmapsto
    U^\dagger A_r(\xx)U $. 
Thus a reduced Hamiltonian also has a representation-basis ambiguity. 
The
present invariant framework can therefore be viewed as a complementary tool:
it does not construct a reduced space, but it provides gauge-invariant
features and quotient-level distances for comparing finite-dimensional
Hamiltonian families or reduced Hamiltonian models.

In particular, reduced-basis methods aim to make large spectral computations
cheaper, whereas the loop-product invariant aims to make comparison and
identification of matrix families basis-independent. These two viewpoints
are compatible. A reduced model may be used to obtain an efficient
finite-dimensional representative, while the loop-product framework can be
used to compare such representatives without committing to a particular
basis inside the reduced space.

\section{Proof of \Cref{thm:full_stability}}
\label{app:proof_reduction}
We retain the setting and notation of \Cref{subsec:well_posedness}: the reference family $H_1$ is non-degenerate on the compact, rectifiably path-connected set $K$ with path-length bound $L_K$; the uniformity constants $g_K$, $B_K$, and $\kappa_K$ are those of \eqref{eq:stab:gap}, \eqref{eq:stab:upperbnd}, and \eqref{eq:kappa_dir_lower_bound}; $\Delta_K$ and $\dist_K$ are the invariant discrepancy and the equivalence distance defined there; and $\map_{\tree,\ell}$ is the associated complete invariant. We also use the pointwise derivative coupling $\calA(\xx;H)$ (\Cref{def:ptwise_derivative_coupling}), the loop products $\Pi(\bgamma;\cdot)$ along cycles $\bgamma$ (\Cref{def:mixed_loop_product}), and the sign-gauge group $\constS$ (\Cref{sec:berry}); two derivative couplings of the same family differ by conjugation with an element of $\constS$ (\Cref{lem:ptwise_sign_gauge_A}). Throughout, $C>0$ denotes a generic constant depending only on $g_K,B_K,\kappa_K,L_K,\size,\dof$; its value may change from line to line.

\Cref{lem:stab_H2_nondeg} shows that the competitor $H_2$ is also non-degenerate once the invariant discrepancy is sufficiently small; local ordered eigenframes are then available for both $H_1$ and $H_2$.

\begin{lemma}[Non-degeneracy of $H_2$]
\label{lem:stab_H2_nondeg}
Let $K\subset\DomX$ be compact, and suppose that $H_1$ has a uniform spectral gap on $K$, 
\begin{equation*}
    \min_{\xx\in K}\min_{j\neq k}
    |\lambda_j(\xx;H_1)-\lambda_k(\xx;H_1)|
    \ge g_K>0 .
\end{equation*}
If $\Delta_K(H_1,H_2)\le \frac{g_K}{4}$, then $H_2$ is non-degenerate on $K$, and
\begin{equation*}
    \min_{\xx\in K}\min_{j\neq k}
    |\lambda_j(\xx;H_2)-\lambda_k(\xx;H_2)|
    \ge \frac{g_K}{2}.
\end{equation*}
\end{lemma}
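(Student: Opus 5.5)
The plan is to read off the spectral component of the invariant. By construction, $\map_{\tree,\ell}(\xx;H)$ contains the ordered eigenvalues $(\lambda_n(\xx;H))_{n=1}^\size$ as its first block. The discrepancy $\Delta_K$ is a maximum of the $\ell^\infty$-norm over all components and all $\xx\in K$. Hence, for every $\xx\in K$ and every $n$,
\begin{equation*}
    |\lambda_n(\xx;H_2)-\lambda_n(\xx;H_1)| \le \Delta_K(H_1,H_2) \le \tfrac{g_K}{4}.
\end{equation*}
One subtlety must be settled first. The spectral block of $\map_{\tree,\ell}(\xx;H_2)$ must be the non-decreasingly ordered eigenvalues $\lambda_1(H_2(\xx))\le\cdots\le\lambda_\size(H_2(\xx))$, which is meaningful whether or not $H_2$ is already known to be non-degenerate. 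I will use the Notation convention, under which $\lambda_n(A)$ is defined for every $A\in\SymHN$. The remaining invariant components, which need eigenframes, are not used in this argument.

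The main step is then the triangle inequality. Fix $\xx\in K$ and indices $j\neq k$. Then
\begin{equation*}
    |\lambda_j(\xx;H_2)-\lambda_k(\xx;H_2)| \ge |\lambda_j(\xx;H_1)-\lambda_k(\xx;H_1)| - |\lambda_j(\xx;H_2)-\lambda_j(\xx;H_1)| - |\lambda_k(\xx;H_2)-\lambda_k(\xx;H_1)|.
\end{equation*}
The first term is at least $g_K$ and the two subtracted terms are each at most $g_K/4$, so the right-hand side is at least $g_K - g_K/4 - g_K/4 = g_K/2>0$. Taking the minimum over $j\neq k$ and over $\xx\in K$ gives the stated bound. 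In particular, all eigenvalues of $H_2(\xx)$ are distinct for every $\xx\in K$, so $H_2$ is non-degenerate on $K$.

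I expect no real obstacle. The only point needing care is the well-definedness remark above: $\Delta_K$ compares ordered eigenvalues componentwise, and ordered eigenvalues are defined for any symmetric matrix. There is therefore no circularity in using the spectral block before non-degeneracy of $H_2$ is established. If instead one wanted a bound through a matrix distance, Weyl's inequality $|\lambda_n(A)-\lambda_n(B)|\le\|A-B\|$ would play the same role. Here, however, the spectral discrepancy is controlled directly by $\Delta_K$, so Weyl's inequality is not needed.
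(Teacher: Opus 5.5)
Your proposal is correct and matches the paper, whose proof is the one-line ``straightforward computation'': the spectral block of $\Delta_K$ puts each ordered eigenvalue of $H_2$ within $g_K/4$ of the corresponding eigenvalue of $H_1$, and the triangle inequality then gives the gap $g_K/2$. Your observation that only the spectral block is used, and that ordered eigenvalues are defined for any symmetric matrix, correctly heads off any circularity about $H_2$'s non-degeneracy.
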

\begin{proof}
    This result follows from a straightforward computation.
\end{proof}

Near any $\xx_0 \in K$, both $H_1$ and $H_2$ therefore admit $C^1$ ordered eigenframes $\eframe_i(\xx,\xx_0)$ with continuous local derivative couplings
\begin{equation*}
    \tilde{\calA}^{\mu}(\xx,\xx_0;H_i)
    := \eframe_i(\xx,\xx_0)^\trans \partial_{x_\mu} \eframe_i(\xx,\xx_0) .
\end{equation*}
However, since $(\tree(\xx),\ell(\xx))$ varies with $\xx$, the mapping $\xx \mapsto \map_{\tree(\xx),\ell(\xx)}(\xx;H_i)$ need not be continuous.
To resolve this, we fix the tree and active labels at $\xx_0$ and consider the frozen invariant $\map_{\tree(\xx_0), \ell(\xx_0)}(\xx;H_i)$ for $\xx$ in a small neighbourhood of $\xx_0$; with the tree and labels fixed, this map is continuous in $\xx$.
At the centre $\xx = \xx_0$, the frozen choice coincides with the pointwise choice used in the definition of
$\Delta_K(H_1,H_2)$. Hence the frozen discrepancy 
\begin{equation*}
\Delta\map(\xx,\xx_0)
:=
\map_{\tree(\xx_0),\ell(\xx_0)}(\xx;H_2)
-
\map_{\tree(\xx_0),\ell(\xx_0)}(\xx;H_1)
\end{equation*}
is controlled at $\xx_0$ by $\Delta_K(H_1,H_2)$, and continuity propagates this control to a neighbourhood of $\xx_0$. Choosing this neighbourhood sufficiently small ensures that the frozen active tree entries remain uniformly bounded away from zero, as in \eqref{eq:kappa_dir_lower_bound}.

\begin{lemma}[Frozen invariant discrepancy]
\label{lem:stab_freezing}
Fix $\xx_0\in K$. Let $(\tree(\xx_0), \ell(\xx_0))$ be the pointwise tree and active labels at $\xx_0$. Let $W_{\xx_0} \subset \DomX$ be an open neighbourhood of $\xx_0$ on which
the local ordered eigenframes $\eframe_i(\xx,\xx_0)$ and their derivative couplings $\tilde{\calA}^{\mu}(\xx,\xx_0;H_i)$ are defined and continuous.
Assume that $\|\Delta\map(\xx_0,\xx_0)\|_\infty\le \delta$ and that, for every $e\in \edge(\tree(\xx_0))$, 
\begin{equation*}
    \left|\tilde{\calA}_e^{\ell_e(\xx_0)}(\xx_0,\xx_0;H_1)\right|\ge \kappa_K
    \quad \text{and} \quad 
    \left|
    \left(\tilde{\calA}_e^{\ell_e(\xx_0)}(\xx_0,\xx_0;H_2)\right)^2
    -
    \left(\tilde{\calA}_e^{\ell_e(\xx_0)}(\xx_0,\xx_0;H_1)\right)^2
    \right|
    \le \delta .
\end{equation*}
If $0<\delta\le 7\kappa_K^2/16$, then there exists an open ball
$U_{\xx_0} \subset W_{\xx_0}$ centred at $\xx_0$ such that $\|\Delta\map(\cdot,\xx_0)\|_{C^0(U_{\xx_0})}
\le 2\delta$, 
and, for all $\xx\in U_{\xx_0}$ and $e\in \edge(\tree(\xx_0))$, 
\begin{equation*}
    \left|\tilde{\calA}_e^{\ell_e(\xx_0)}(\xx,\xx_0;H_1)\right|
\ge \frac{\kappa_K}{2}
\quad \text{and} \quad 
\left|\tilde{\calA}_e^{\ell_e(\xx_0)}(\xx,\xx_0;H_2)\right|
\ge \frac{\kappa_K}{4}.
\end{equation*}
\end{lemma}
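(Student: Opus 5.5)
The argument is a continuity argument at the single point $\xx_0$, with the tree and labels frozen. Fix $(\tree(\xx_0),\ell(\xx_0))$. On $W_{\xx_0}$ the local couplings $\tilde{\calA}^\mu(\cdot,\xx_0;H_i)$ are continuous, and by \Cref{lem:ptwise_sign_gauge_A} and \Cref{prop:loop_product_invariance} the frozen invariant $\map_{\tree(\xx_0),\ell(\xx_0)}(\xx;H_i)$ can be evaluated with these continuous representatives. Each component is a spectral value, a 2-loop, or a cycle product, so a polynomial in the entries of $\tilde{\calA}(\xx,\xx_0;H_i)$ and the eigenvalues $\lambda_n(\xx;H_i)$. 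Eigenvalues of the continuous family $H_i$ are continuous. Hence $\xx\mapsto\Delta\map(\xx,\xx_0)$ is continuous on $W_{\xx_0}$, and so is each function $\xx\mapsto\tilde{\calA}_e^{\ell_e(\xx_0)}(\xx,\xx_0;H_i)$ for the finitely many tree edges $e$.

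\textbf{Step 1: the two inequalities at the centre.} For the first family this is the hypothesis, $|\tilde{\calA}_e^{\ell_e}(\xx_0,\xx_0;H_1)|\ge\kappa_K$. For the second, the squared-entry hypothesis gives
\begin{equation*}
\bigl(\tilde{\calA}_e^{\ell_e}(\xx_0,\xx_0;H_2)\bigr)^2\ge\kappa_K^2-\delta\ge\kappa_K^2-\tfrac{7}{16}\kappa_K^2=\tfrac{9}{16}\kappa_K^2 .
\end{equation*}
Therefore $|\tilde{\calA}_e^{\ell_e}(\xx_0,\xx_0;H_2)|\ge\tfrac34\kappa_K$. This explains the threshold $7\kappa_K^2/16$: it leaves a strict margin above $\kappa_K/4$ at the centre, and likewise a strict margin of $\kappa_K$ above $\kappa_K/2$ for $H_1$.

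\textbf{Step 2: shrink to a ball.} By continuity, for each tree edge $e$ (finitely many, at most $\size-1$) and each $i$, there is a ball around $\xx_0$ on which the entry stays within a fixed tolerance of its centre value. Taking tolerance $\kappa_K/2$ for $H_1$ and $\kappa_K/2$ for $H_2$ gives the bounds $\ge\kappa_K/2$ and $\ge\kappa_K/4$ respectively.

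For the discrepancy the main subtlety arises. We need $\|\Delta\map(\xx,\xx_0)\|_\infty\le2\delta$ near $\xx_0$ from $\|\Delta\map(\xx_0,\xx_0)\|_\infty\le\delta$. Continuity of the finitely many components gives a ball where each component moves by less than $\delta$, which suffices because $\delta>0$ strictly. This is why the hypothesis requires $\delta>0$.

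We take $U_{\xx_0}$ to be the intersection of these finitely many balls, all centred at $\xx_0$ and contained in $W_{\xx_0}$. It is again an open ball.

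\textbf{Main obstacle.} The only point needing care is justifying continuity of the frozen invariant. One must check that, with the continuous local frames, every recorded product is a fixed polynomial in continuous entries. This includes the extended cycle products on non-tree pairs $\{p,q\}$, which are still fixed index products once the tree is frozen. One must also check that switching from the pointwise couplings $\calA$ to the continuous $\tilde{\calA}$ does not change the values. The latter follows from \Cref{lem:ptwise_sign_gauge_A} together with \Cref{prop:loop_product_invariance}, since sign conjugation preserves all squares, mixed 2-loops and loop products.
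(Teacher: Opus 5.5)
Your proposal is correct and follows essentially the same route as the paper: you derive the centre estimate $|\tilde{\calA}_e^{\ell_e}(\xx_0,\xx_0;H_2)|\ge \tfrac34\kappa_K$ from $\delta\le 7\kappa_K^2/16$, then shrink to a ball using continuity of the finitely many frozen invariant components and active tree entries. Your only addition is the explicit justification, via \Cref{lem:ptwise_sign_gauge_A} and \Cref{prop:loop_product_invariance}, that the frozen invariant may be evaluated with the continuous local couplings; the paper takes this continuity as given.
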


\begin{proof}
By the assumed continuity, $\Delta\map(\cdot,\xx_0)$ is continuous on $W_{\xx_0}$, so there is
an open ball $U_1\subset W_{\xx_0}$ centred at $\xx_0$ on which it differs from its value at
$\xx_0$ by at most $\delta$ in $\|\cdot\|_\infty$; together with the assumption
$\|\Delta\map(\xx_0,\xx_0)\|_\infty\le\delta$ this gives $\|\Delta\map(\cdot,\xx_0)\|_{C^0(U_1)}\le 2\delta$.
For the active entries, fix $e\in\edge(\tree(\xx_0))$ and abbreviate
$a_i:=\tilde{\calA}_e^{\ell_e(\xx_0)}(\xx_0,\xx_0;H_i)$. The hypotheses give $|a_1|\ge\kappa_K$ 
and $|a_2^2-a_1^2|\le\delta$, hence $|a_2|^2\ge\kappa_K^2-\delta\ge\tfrac{9}{16}\kappa_K^2$ and
therefore $|a_2|\ge\tfrac{3\kappa_K}{4}$.
Since $\edge(\tree(\xx_0))$ is finite and each $\xx\mapsto\tilde{\calA}_e^{\ell_e(\xx_0)}(\xx,\xx_0;H_i)$
is continuous on $W_{\xx_0}$, we may shrink $U_1$ to an open ball $U_{\xx_0}$ centred at $\xx_0$ on
which $|\tilde{\calA}_e^{\ell_e(\xx_0)}(\xx,\xx_0;H_1)|\ge\kappa_K/2$ and
$|\tilde{\calA}_e^{\ell_e(\xx_0)}(\xx,\xx_0;H_2)|\ge\kappa_K/4$ for every $e\in\edge(\tree(\xx_0))$.
Shrinking preserves the discrepancy estimate.
\end{proof}

\Cref{lem:stab_freezing} bounds the frozen invariant discrepancy by $2\delta$ on a ball $U_{\xx_0}$ around $\xx_0$, and keeps the frozen active tree entries bounded away from zero there. 
\Cref{lem:stab_local_reconstruction} shows that
the two local derivative-coupling families agree up to a single diagonal sign gauge, with an error controlled by the frozen invariant discrepancy; the quantitative analogue of {\it Steps 1-4} in the proof of \Cref{thm:complete_invariant_new}.
The families $A_i^\mu$ are kept abstract: in the assembly below they are instantiated as the local derivative couplings $\tilde{\calA}^\mu(\cdot,\xx_0;H_i)$ furnished by \Cref{lem:stab_freezing}, but the proof uses only their skew-symmetry and the invariant-discrepancy bounds~(i)--(v). The lemma can be read as a purely quantitative statement about two families of skew-symmetric matrices.

%
%
\begin{lemma}[Local sign reconstruction from invariant discrepancies]
\label{lem:stab_local_reconstruction}
Let $U\subset\R^{\dof}$ be connected and let $V\subset U$. Let $\tree$ be a spanning tree on $\{1,\ldots,\size\}$, and assign an active label $\ell_e\in\{1,\ldots,\dof\}$ to each tree edge $e\in \edge(\tree)$. Fix an orientation of every tree edge. 
Let $A_i^\mu(\xx)$, $i=1,2$ and $\mu=1,\ldots,\dof$, be continuous families of real skew-symmetric matrices. Suppose that, for some $0\le\varepsilon\le1$, $\kappa>0$, and $B_K>0$, the following hold. 

\begin{enumerate}[label=(\roman*), leftmargin=*, widest=iii]
\item \emph{Boundedness on $V$:} 
\begin{equation*}
    |A_{1,mn}^\mu(\xx)|\le B_K,
    \qquad
    \xx\in V,\quad \mu=1,\ldots,\dof,\quad m,n=1,\ldots,\size .
\end{equation*}

\item \emph{Tree activation on $U$:}
\begin{equation*}
    |A_{i,e}^{\ell_e}(\xx)|\ge \kappa,
    \qquad
    \xx\in U,\quad i=1,2,\quad e\in \edge(\tree).
\end{equation*}

\item \emph{Pure two-loop on $V$:}
\begin{equation*}
    |A_{2,mn}^\mu(\xx)^2-A_{1,mn}^\mu(\xx)^2|
    \le \varepsilon, \qquad \xx\in V, \quad \mu=1,\ldots,\dof, \quad m,n=1,\ldots,\size. 
\end{equation*}

\item \emph{Mixed two-loop on tree edges on $V$:}
\begin{equation*}
    |A_{2,e}^{\ell_e}(\xx)A_{2,e}^\mu(\xx)
      -A_{1,e}^{\ell_e}(\xx)A_{1,e}^\mu(\xx)|
    \le \varepsilon, \qquad \xx\in V, \quad e\in \edge(\tree), \quad \mu=1,\ldots,\dof. 
\end{equation*}

\item \emph{Fundamental cycles on $V$:}
For every non-tree pair $\{p,q\}\notin \edge(\tree)$, let $p=v_0,\ldots,v_k=q$ be
the unique tree path from $p$ to $q$ and set $f_r:=\{v_{r-1},v_r\}$; then, for all
$\xx\in V$ and $\mu=1,\ldots,\dof$,
\begin{equation*}
\begin{aligned}
    \Bigl|\;
        &A_{2,qp}^\mu(\xx)\prod_{r=1}^k A_{2,v_{r-1}v_r}^{\ell_{f_r}}(\xx) -
         A_{1,qp}^\mu(\xx)\prod_{r=1}^k A_{1,v_{r-1}v_r}^{\ell_{f_r}}(\xx)
    \;\Bigr| \le \varepsilon .
\end{aligned}
\end{equation*}
\end{enumerate}
Then there exists a diagonal sign matrix $S\in\constS$ such that $\max_{1\le\mu\le\dof}\|A_2^\mu(\cdot)-SA_1^\mu(\cdot) S\|_{C^0(V)}\le C\,\varepsilon$, 
where $C$ depends only on $\kappa$, $B_K$, $\size$, and $\dof$. 
\end{lemma}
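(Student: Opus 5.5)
The plan is to first extract a single diagonal sign matrix from the tree-edge signs, which are locally rigid because of the activation hypothesis~(ii). Then, after applying this sign gauge to $A_2$, I would estimate entrywise differences in three stages: active tree entries, then all directions on tree edges, then non-tree pairs. This is the quantitative version of Steps~1--4 in the proof of \Cref{thm:complete_invariant_new}.

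\emph{Step 1: choosing $S$.} For each tree edge $e$ (with its fixed orientation), the ratio $A_{2,e}^{\ell_e}(\xx)/A_{1,e}^{\ell_e}(\xx)$ is continuous on $U$ and never zero, by (ii) for $i=1,2$. Hence $s_e(\xx):=\operatorname{sgn}\bigl(A_{2,e}^{\ell_e}(\xx)A_{1,e}^{\ell_e}(\xx)\bigr)\in\{\pm1\}$ is continuous on the connected set $U$, and therefore constant; write $s_e$. Since $\tree$ is a tree, there are vertex signs $\sigma_1,\dots,\sigma_\size$ with $\sigma_m\sigma_n=s_{\{m,n\}}$ for every tree edge. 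To obtain them, fix a root with $\sigma=1$ and multiply the $s_f$ along the unique tree path; no cycle condition is needed. Set $S=\diag(\sigma_1,\dots,\sigma_\size)$ and $\hat A_2^\mu:=SA_2^\mu S$.

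Every hypothesis (i)--(v) is a statement about squares, products along closed loops, or absolute values, so each remains valid with $A_2$ replaced by $\hat A_2$. This follows from the computation in \Cref{prop:loop_product_invariance}, because each vertex appears an even number of times in the products of (iii)--(v). Skew-symmetry makes the orientation conventions harmless: reversing an edge flips both factors alike. It therefore suffices to show $\|\hat A_2^\mu-A_1^\mu\|_{C^0(V)}\le C\varepsilon$, knowing now that $\hat A_{2,e}^{\ell_e}$ and $A_{1,e}^{\ell_e}$ have the same sign on $U\supset V$.

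\emph{Step 2: entrywise bounds on $V$.} As a preliminary, (i) and (iii) give $|\hat A_{2,mn}^\mu|\le (B_K^2+1)^{1/2}$, using $\varepsilon\le 1$.

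\begin{enumerate}[label=(\alph*), leftmargin=*]
\item \emph{Active tree entries.} Writing $a_i=A_{i,e}^{\ell_e}$, same signs give $|a_1+a_2|\ge 2\kappa$. Hence $|a_2-a_1|=|a_2^2-a_1^2|/|a_1+a_2|\le\varepsilon/(2\kappa)$.
\item \emph{Other directions on tree edges.} For $b_i=A_{i,e}^\mu$, use $a_1(b_2-b_1)=(a_2b_2-a_1b_1)-(a_2-a_1)b_2$. With (iv) and $|a_1|\ge\kappa$ this gives $|b_2-b_1|\le C\varepsilon$.
\item \emph{Non-tree pairs $\{p,q\}$.} Let $P_i=\prod_r A_{i,v_{r-1}v_r}^{\ell_{f_r}}$. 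Then $|P_1|\ge\kappa^k$, and a telescoping sum with (a) and the bounds $B_K,(B_K^2+1)^{1/2}$ gives $|P_2-P_1|\le C\varepsilon$. With $c_i=A_{i,qp}^\mu$, the identity $P_1(c_2-c_1)=(c_2P_2-c_1P_1)-c_2(P_2-P_1)$ and (v) give $|c_2-c_1|\le C\varepsilon\kappa^{-k}$, where $k\le\size-1$.
\end{enumerate}

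Diagonal entries vanish by skew-symmetry, and all constants depend only on $\kappa,B_K,\size,\dof$.

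The main obstacle is Step~1, and specifically where the hypotheses are required to hold. The sign must be a \emph{single} $S$ valid on all of $V$, not chosen pointwise. This is exactly why activation (ii) is assumed for both families on the connected set $U$, while the discrepancy bounds are only needed on $V$. Without the lower bound on $A_2$ the sign of the ratio could jump, and the estimate in (a) would fail because $|a_1+a_2|$ could be small.
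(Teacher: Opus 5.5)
Your proof is correct and follows the paper's argument essentially step for step. You get constant edge signs from the activation of both families on the connected set $U$, build vertex signs along root paths, and then estimate in three stages: active tree entries via $|a_1+a_2|\ge 2\kappa$, the other directions via the mixed two-loop identity, and non-tree pairs via the path product bounded below by $\kappa^k$. The differences are cosmetic. You absorb $S$ into $A_2$ up front, using sign-gauge invariance of the hypotheses, and you divide by the $A_1$-factors where the paper divides by the $A_2$-factors; both are legitimate, since (ii) bounds both families from below.
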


\begin{proof}
We first record a uniform bound for the entries of $A_2^\mu(\cdot)$ on $V$. By the pure two-loop assumption~(iii) and $0\le \varepsilon\le 1$,
\begin{equation*}
    |A_{2,mn}^\mu(\xx)|^2
    \le
    |A_{1,mn}^\mu(\xx)|^2+\varepsilon
    \le
    B_K^2+1,
    \qquad
    \xx\in V.
\end{equation*}
Thus all entries of $A_1^\mu(\cdot)$ and $A_2^\mu(\cdot)$ are bounded on $V$ by a constant $B_\sharp$ depending only on $B_K$.

We now reconstruct the signs on the tree. Fix a tree edge $e\in\edge(\tree)$. By the tree activation assumption~(ii),
\begin{equation*}
    |A_{i,e}^{\ell_e}(\xx)|\ge \kappa,
    \qquad
    \xx\in U,\quad i=1,2.
\end{equation*}
Hence the function
\begin{equation*}
    s_e(\xx):=\operatorname{sgn}\!\left(\frac{A_{2,e}^{\ell_e}(\xx)}{A_{1,e}^{\ell_e}(\xx)}\right)
\end{equation*}
is well-defined and continuous on $U$, with values in $\{\pm1\}$. Since $U$ is connected, $s_e(\xx)$ is constant on $U$; we denote this constant by $s_e$.

For $\xx\in V$, the two quantities $A_{2,e}^{\ell_e}(\xx)$ and $s_eA_{1,e}^{\ell_e}(\xx)$ have the
same sign. Therefore 
\begin{equation*}
    |A_{2,e}^{\ell_e}(\xx)+s_eA_{1,e}^{\ell_e}(\xx)|
    =
    |A_{2,e}^{\ell_e}(\xx)|+|A_{1,e}^{\ell_e}(\xx)|
    \ge 2\kappa.
\end{equation*}
Using the pure two-loop discrepancy with $\mu=\ell_e$, we obtain
\begin{equation*}
    |A_{2,e}^{\ell_e}(\xx)-s_eA_{1,e}^{\ell_e}(\xx)| = \frac{|A_{2,e}^{\ell_e}(\xx)^2-A_{1,e}^{\ell_e}(\xx)^2|}
         {|A_{2,e}^{\ell_e}(\xx)+s_eA_{1,e}^{\ell_e}(\xx)|}\le \frac{\varepsilon}{2\kappa}.
\end{equation*}
Hence
\begin{equation}
\label{eq:local_recon_tree_active}
    \|A_{2,e}^{\ell_e}(\cdot)-s_eA_{1,e}^{\ell_e}(\cdot)\|_{C^0(V)}
    \le C\varepsilon .
\end{equation}

Next, fix a direction $\mu\in\{1,\ldots,\dof\}$. The mixed two-loop assumption~(iv) gives
\begin{equation*}
    |A_{2,e}^{\ell_e}(\xx)A_{2,e}^\mu(\xx)
      -A_{1,e}^{\ell_e}(\xx)A_{1,e}^\mu(\xx)|
    \le \varepsilon,
    \qquad \xx\in V.
\end{equation*}
Using the identity
\begin{equation*}
    \begin{aligned}
    &A_{2,e}^{\ell_e}(\xx)\left(A_{2,e}^\mu(\xx)-s_eA_{1,e}^\mu(\xx)\right) \\
    &\quad = \left(A_{2,e}^{\ell_e}(\xx)A_{2,e}^\mu(\xx)-A_{1,e}^{\ell_e}(\xx)A_{1,e}^\mu(\xx)\right) 
     + \left(A_{1,e}^{\ell_e}(\xx)-s_eA_{2,e}^{\ell_e}(\xx)\right)A_{1,e}^\mu(\xx),
\end{aligned}
\end{equation*}   
together with $|A_{2,e}^{\ell_e}(\xx)|\ge\kappa$, $|A_{1,e}^\mu(\xx)|\le B_K$, and 
\eqref{eq:local_recon_tree_active}, we obtain
\begin{equation*}
    \|A_{2,e}^\mu(\cdot)-s_eA_{1,e}^\mu(\cdot)\|_{C^0(V)}
    \le C\varepsilon
\end{equation*}
for every $e\in\edge(\tree)$ and every $\mu=1,\ldots,\dof$. Since the matrices are skew-symmetric, the same estimate holds if the orientation of the edge is reversed.

Choose a root $v_\ast$ of the tree. For each vertex $v$, define $\sigma_v:=\prod_{f\in P_{\tree}(v_\ast,v)} s_f$, where $P_{\tree}(v_\ast,v)$ is the unique tree path from $v_\ast$ to $v$. 
Set $S:=\diag(\sigma_1,\ldots,\sigma_\size)\in\constS$.
Then, for every tree edge $\{p,q\}\in\edge(\tree)$, $s_{\{p,q\}}=\sigma_p\sigma_q$. 
Consequently,
\begin{equation}
\label{eq:local_recon_tree_edges}
    |A_{2,pq}^\mu(\xx)-\sigma_p\sigma_qA_{1,pq}^\mu(\xx)|
    \le C\varepsilon,
    \qquad
    \xx\in V,
\end{equation}
for every tree edge $\{p,q\}\in\edge(\tree)$ and every
$\mu=1,\ldots,\dof$.

It remains to control the non-tree pairs. Let
$\{p,q\}\notin\edge(\tree)$, and let $p=v_0,\ldots,v_k=q$ be the unique tree path from $p$ to $q$. Set $f_r:=\{v_{r-1},v_r\}$ for $r=1,\ldots,k$. Define the active path products
\begin{equation*}
    \Pi_i(\xx)
    :=
    \prod_{r=1}^k
    A_{i,v_{r-1}v_r}^{\ell_{f_r}}(\xx),
    \qquad i=1,2.
\end{equation*}
By tree activation, $|\Pi_2(\xx)|
    \ge
    \kappa^k
    \ge
    \min\{\kappa,\kappa^{\size-1}\}
    =:c_{\kappa,\size}>0$. 
Moreover, applying the tree-edge estimate to each edge of the
path gives
\begin{equation*}
\left|
A_{2,v_{r-1}v_r}^{\ell_{f_r}}(\xx)
-
\sigma_{v_{r-1}}\sigma_{v_r}
A_{1,v_{r-1}v_r}^{\ell_{f_r}}(\xx)
\right|
\le C\varepsilon , \qquad r=1,\ldots,k .
\end{equation*}
Since all factors are bounded on $V$ by $B_\sharp$, the elementary product estimate
\begin{equation*}
    \left|
    \prod_{r=1}^k y_r-\prod_{r=1}^k z_r
    \right|
    \le
    \sum_{r=1}^k
    |y_r-z_r|
    \prod_{s<r}|y_s|
    \prod_{s>r}|z_s|
\end{equation*}
implies 
\begin{equation*}
    |\Pi_2(\xx)-\sigma_p\sigma_q\Pi_1(\xx)|
    \le C\varepsilon,
    \qquad \xx\in V.
\end{equation*}

Now fix $\mu\in\{1,\ldots,\dof\}$. The fundamental-cycle assumption~(v) gives
\begin{equation*}
    |A_{2,qp}^\mu(\xx)\Pi_2(\xx)-A_{1,qp}^\mu(\xx)\Pi_1(\xx)| \le \varepsilon, \quad \xx\in V.
\end{equation*}
Using the identity 
\begin{align*}
    &\Pi_2(\xx) \bigl(A_{2,qp}^\mu(\xx)-\sigma_p\sigma_qA_{1,qp}^\mu(\xx)\bigr) \\
    =& \bigl(A_{2,qp}^\mu(\xx)\Pi_2(\xx)-A_{1,qp}^\mu(\xx)\Pi_1(\xx)\bigr) + A_{1,qp}^\mu(\xx) \bigl(\Pi_1(\xx)-\sigma_p\sigma_q\Pi_2(\xx)\bigr), 
\end{align*}
together with $|A_{1,qp}^\mu(\xx)|\le B_K$, the product estimate above, and the lower bound $|\Pi_2(\xx)|\ge c_{\kappa,\size}$, we obtain $|A_{2,qp}^\mu(\xx)-\sigma_p\sigma_qA_{1,qp}^\mu(\xx)| \le C\varepsilon$ for $\xx\in V$. Since $A_{i,qp}^\mu(\xx) = -A_{i,pq}^\mu(\xx)$, this is equivalent to
\begin{equation}
\label{eq:local_recon_nontree_edges}
    |A_{2,pq}^\mu(\xx)-\sigma_p\sigma_qA_{1,pq}^\mu(\xx)|
    \le C\varepsilon,
    \qquad \xx\in V,
\end{equation}
for every non-tree pair $\{p,q\}\notin\edge(\tree)$ and every $\mu=1,\ldots,\dof$.

Combining \eqref{eq:local_recon_tree_edges} and
\eqref{eq:local_recon_nontree_edges}, we obtain
\begin{equation*}
    |A_{2,pq}^\mu(\xx)-\sigma_p\sigma_qA_{1,pq}^\mu(\xx)|
    \le C\varepsilon
\end{equation*}
for every $p\neq q$, every $\mu=1,\ldots,\dof$, and every $\xx\in V$.
The diagonal entries vanish because the matrices are skew-symmetric. Since $(SA_1^\mu(\xx)S)_{pq} = \sigma_p\sigma_qA_{1,pq}^\mu(\xx)$, we conclude that
\begin{equation*}
    \max_{1\le\mu\le\dof}
    \|A_2^\mu(\cdot)-SA_1^\mu(\cdot)S\|_{C^0(V)}
    \le C\varepsilon,
\end{equation*}
with $C$ depending only on $\kappa$, $B_K$, $\size$, and $\dof$.
\end{proof}

The sign matrices of \Cref{lem:stab_local_reconstruction} are constructed independently on each neighbourhood and need not fit together into a single global gauge. \Cref{lem:stab_overlap_compatibility} shows that, on every nonempty overlap, the associated local orthogonal gauges agree up to one overall sign. This corresponds to the first half of {\it Step 6} in the proof of \Cref{thm:complete_invariant_new}.

\begin{lemma}[Overlap compatibility of local gauges]
\label{lem:stab_overlap_compatibility}
Let $U_\alpha$ and $U_\beta$ be two open balls such that $V_{\alpha\beta}:=U_\alpha\cap U_\beta\cap K\neq\emptyset$, and assume that $U_\alpha\cap U_\beta$ is connected. For $i=1,2$, let
$\eframe_i^{(\alpha)}$ and $\eframe_i^{(\beta)}$ be ordered $C^1$ eigenframes of $H_i$ on $U_\alpha$ and $U_\beta$, respectively. Define $\tilde{\calA}_i^{(\alpha)\mu} := \eframe_i^{(\alpha)\trans}\partial_{x_\mu}\eframe_i^{(\alpha)}$ and $\tilde{\calA}_i^{(\beta)\mu} := \eframe_i^{(\beta)\trans}\partial_{x_\mu}\eframe_i^{(\beta)}$. 
Let $S_\alpha,S_\beta\in\constS$. Assume that, for some $\varepsilon>0$, 
\begin{equation*}
    \max_{1\le\mu\le\dof} \left\| \tilde{\calA}_2^{(\gamma)\mu} - S_\gamma\tilde{\calA}_1^{(\gamma)\mu}S_\gamma \right\|_{C^0(V_{\alpha\beta})} \le \varepsilon, \quad \gamma\in\{\alpha,\beta\}. 
\end{equation*}
Assume also that there is a spanning tree $\tree_\alpha$ with active labels $\ell_{\alpha,e}$ such that $|(\tilde{\calA}_1^{(\alpha)\ell_{\alpha,e}})_e(\xx)| \ge \kappa_0$ for $\xx\in V_{\alpha\beta}$ and $e\in \edge(\tree_\alpha)$. 
If $\varepsilon<\kappa_0$, then the local gauges $Q^{(\alpha)} := \eframe_1^{(\alpha)}S_\alpha\eframe_2^{(\alpha)\trans}$ and $Q^{(\beta)} := \eframe_1^{(\beta)}S_\beta\eframe_2^{(\beta)\trans}$ satisfy $Q^{(\beta)}=\pm Q^{(\alpha)}$ on $U_\alpha \cap U_\beta$. 
\end{lemma}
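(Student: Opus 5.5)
The plan is to reduce the comparison of $Q^{(\alpha)}$ and $Q^{(\beta)}$ to a comparison of sign matrices, and then use the tree activation to show that the relevant sign matrix is $\pm I$.

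\textbf{Step 1: relate the frames on the overlap.} On the connected overlap $U_\alpha\cap U_\beta$, both $\eframe_i^{(\alpha)}$ and $\eframe_i^{(\beta)}$ are ordered $C^1$ eigenframes of $H_i$. The argument in the proof of \Cref{lem:ptwise_sign_gauge_A} then gives constant sign matrices $D_i\in\constS$ with
\begin{equation*}
\eframe_i^{(\beta)}=\eframe_i^{(\alpha)}D_i
\quad\text{on } U_\alpha\cap U_\beta .
\end{equation*}
Consequently $\tilde{\calA}_i^{(\beta)\mu}=D_i\tilde{\calA}_i^{(\alpha)\mu}D_i$ there. Substituting, and using that diagonal sign matrices commute, gives
\begin{equation*}
Q^{(\beta)}=\eframe_1^{(\alpha)}\,D_1S_\beta D_2\,\eframe_2^{(\alpha)\trans}
\quad\text{on } U_\alpha\cap U_\beta .
\end{equation*}
It therefore suffices to show that $R:=S_\alpha D_1S_\beta D_2\in\constS$ equals $\pm I$, since then $D_1S_\beta D_2=\pm S_\alpha$.

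\textbf{Step 2: a second approximate sign relation.} Rewrite the $\beta$-hypothesis in $\alpha$-frames. Conjugating $\tilde{\calA}_2^{(\beta)\mu}-S_\beta\tilde{\calA}_1^{(\beta)\mu}S_\beta$ by $D_2$, an isometry on entries, gives
\begin{equation*}
\bigl\|\tilde{\calA}_2^{(\alpha)\mu}-T\,\tilde{\calA}_1^{(\alpha)\mu}\,T\bigr\|_{C^0(V_{\alpha\beta})}\le\varepsilon,
\qquad T:=D_2S_\beta D_1\in\constS .
\end{equation*}
The $\alpha$-hypothesis gives the same bound with $S_\alpha$ in place of $T$. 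By the triangle inequality,
\begin{equation*}
\bigl|(S_\alpha\tilde{\calA}_1^{(\alpha)\mu}S_\alpha)_{mn}-(T\tilde{\calA}_1^{(\alpha)\mu}T)_{mn}\bigr|\le 2\varepsilon
\quad\text{on } V_{\alpha\beta}.
\end{equation*}

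\textbf{Step 3: signs on the tree edges.} Write $S_\alpha T=R=\diag(r_1,\dots,r_\size)$. Fix a point $\xx\in V_{\alpha\beta}$, a tree edge $e=\{m,n\}\in\edge(\tree_\alpha)$, and $\mu=\ell_{\alpha,e}$. The left side of the Step 2 bound equals
\begin{equation*}
\bigl|1-r_mr_n\bigr|\,\bigl|(\tilde{\calA}_1^{(\alpha)\mu})_{mn}(\xx)\bigr| .
\end{equation*}
If $r_mr_n=-1$, this is at least $2\kappa_0>2\varepsilon$, a contradiction. Hence $r_mr_n=1$ on every edge of $\tree_\alpha$. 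Because $\tree_\alpha$ spans and is connected, propagating along tree paths gives that all $r_j$ coincide, so $R=\pm I$.

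\textbf{Step 4: conclusion.} From $R=\pm I$ we get $D_1S_\beta D_2=\pm S_\alpha$, and therefore $Q^{(\beta)}=\pm Q^{(\alpha)}$ on all of $U_\alpha\cap U_\beta$. The sign is constant because the $D_i$ are constant on the connected overlap.

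The main obstacle is Step 1. We need the constancy of the sign matrices $D_i$ across the whole overlap, not merely pointwise, and this is exactly where connectedness of $U_\alpha\cap U_\beta$ is used. We also need that $V_{\alpha\beta}$ is nonempty, so that at least one evaluation point is available in Step 3; the strict inequality $\varepsilon<\kappa_0$ is what closes that step.
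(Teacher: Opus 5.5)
Your proposal is correct and follows essentially the same route as the paper's proof. Your $D_i$ and $T$ are the paper's constant transition matrices $T_i^{(\alpha\beta)}$ and $\widehat S_\beta^{(\alpha)}$; the paper likewise rewrites the $\beta$-estimate in $\alpha$-frames, obtains the $2\varepsilon$ bound for $R=S_\alpha\widehat S_\beta^{(\alpha)}$, and uses the tree-edge lower bound with $\varepsilon<\kappa_0$ to force $R=\pm I$.
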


\begin{proof}
On $U_\alpha\cap U_\beta$, define $T_i^{(\alpha\beta)}(\xx) := \eframe_i^{(\alpha)}(\xx)^\trans \eframe_i^{(\beta)}(\xx)$ for $i = 1,2$. 
Since $\eframe_i^{(\alpha)}$ and $\eframe_i^{(\beta)}$ are ordered eigenframes of the same simple-spectrum matrix $H_i(\xx)$, they can differ only by the signs of their columns. Hence
$T_i^{(\alpha\beta)}(\xx)\in \constS$. Moreover, $T_i^{(\alpha\beta)}$ is continuous. Since
$U_\alpha\cap U_\beta$ is connected and $\constS$ is discrete, $T_i^{(\alpha\beta)}(\xx)$ is constant on $U_\alpha\cap U_\beta$; we write this constant as $T_i^{(\alpha\beta)}$. Thus $\eframe_i^{(\beta)} = \eframe_i^{(\alpha)}T_i^{(\alpha\beta)}$ on $U_\alpha\cap U_\beta$. 
Consequently, $\tilde{\calA}_i^{(\beta)\mu} = T_i^{(\alpha\beta)} \tilde{\calA}_i^{(\alpha)\mu} T_i^{(\alpha\beta)}$. 

The assumed estimate on $U_\beta$ gives, for each $\mu$,
\begin{equation*}
    \tilde{\calA}_2^{(\beta)\mu} = S_\beta \tilde{\calA}_1^{(\beta)\mu} S_\beta + E_\beta^\mu, \qquad \|E_\beta^\mu\|_{C^0(V_{\alpha\beta})}\le \varepsilon .
\end{equation*}
Using the transition identities and conjugating by $T_2^{(\alpha\beta)}$, we
obtain on $V_{\alpha\beta}$
\begin{equation*}
\begin{aligned}
    \tilde{\calA}_2^{(\alpha)\mu}
    &= T_2^{(\alpha\beta)} \tilde{\calA}_2^{(\beta)\mu} T_2^{(\alpha\beta)}  \\
    &= T_2^{(\alpha\beta)} S_\beta \tilde{\calA}_1^{(\beta)\mu} S_\beta T_2^{(\alpha\beta)} + T_2^{(\alpha\beta)}E_\beta^\mu T_2^{(\alpha\beta)}  \\
    &= T_2^{(\alpha\beta)} S_\beta T_1^{(\alpha\beta)} \tilde{\calA}_1^{(\alpha)\mu} T_1^{(\alpha\beta)} S_\beta T_2^{(\alpha\beta)} + \widehat E_\beta^\mu ,
\end{aligned}
\end{equation*}
where $\widehat E_\beta^\mu := T_2^{(\alpha\beta)}E_\beta^\mu T_2^{(\alpha\beta)}$ satisfies $\|\widehat E_\beta^\mu\|_{C^0(V_{\alpha\beta})}\le \varepsilon$. 
All sign matrices are diagonal, hence commute; define $\widehat S_\beta^{(\alpha)} := T_2^{(\alpha\beta)}S_\beta T_1^{(\alpha\beta)} \in\constS$. 
Then the $U_\beta$-estimate becomes $\tilde{\calA}_2^{(\alpha)\mu} = \widehat S_\beta^{(\alpha)} \tilde{\calA}_1^{(\alpha)\mu} \widehat S_\beta^{(\alpha)} + \widehat E_\beta^\mu$. On the other hand, the assumed estimate on $U_\alpha$ gives $\tilde{\calA}_2^{(\alpha)\mu} = S_\alpha \tilde{\calA}_1^{(\alpha)\mu} S_\alpha + E_\alpha^\mu$ with $\|E_\alpha^\mu\|_{C^0(V_{\alpha\beta})}\le \varepsilon$. 
Comparing the two representations of $\tilde{\calA}_2^{(\alpha)\mu}$, we get
\begin{equation*}
    \max_{1\le\mu\le\dof} \left\| S_\alpha\tilde{\calA}_1^{(\alpha)\mu}S_\alpha - \widehat S_\beta^{(\alpha)} \tilde{\calA}_1^{(\alpha)\mu} \widehat S_\beta^{(\alpha)} \right\|_{C^0(V_{\alpha\beta})} \le 2\varepsilon. 
\end{equation*}

Set $R:=S_\alpha\widehat S_\beta^{(\alpha)} = \diag(r_1,\ldots,r_\size)\in\constS$. 
Multiplying the last estimate on the left and right by $S_\alpha$ preserves the norm and gives
\begin{equation*}
    \max_{1\le\mu\le\dof} \left\| \tilde{\calA}_1^{(\alpha)\mu} - R\tilde{\calA}_1^{(\alpha)\mu}R \right\|_{C^0(V_{\alpha\beta})} \le 2\varepsilon .
\end{equation*}
Fix $\xx\in V_{\alpha\beta}$. For a tree edge
$e=\{m,n\}\in \edge(\tree_\alpha)$, using its active direction $\ell_{\alpha,e}$, the $(m,n)$-entry gives
$|1-r_mr_n|\, \left| (\tilde{\calA}_1^{(\alpha)\ell_{\alpha,e}})_{mn}(\xx) \right| \le 2\varepsilon$. 
By the active lower bound, $\left| (\tilde{\calA}_1^{(\alpha)\ell_{\alpha,e}})_{mn}(\xx) \right| \ge \kappa_0$. 
If $r_mr_n=-1$, then $|1-r_mr_n|=2$, and the preceding inequality implies $2\kappa_0\le 2\varepsilon$, contradicting $\varepsilon<\kappa_0$. Hence $r_mr_n=1$
for every edge $e=\{m,n\}\in\edge(\tree_\alpha)$. Since $\tree_\alpha$ is connected, all $r_m$ are equal. Therefore $R=\pm I$. 
Equivalently, $S_\alpha\widehat S_\beta^{(\alpha)}=\pm I$, and hence $\widehat S_\beta^{(\alpha)}=\pm S_\alpha$. 
That is,
\begin{equation*}
    T_2^{(\alpha\beta)}S_\beta T_1^{(\alpha\beta)} = \pm S_\alpha. 
\end{equation*}

Finally, on $U_\alpha\cap U_\beta$,
\begin{equation*}
    \begin{aligned}
    Q^{(\beta)}
    &= \eframe_1^{(\beta)} S_\beta \eframe_2^{(\beta)\trans}  
    = \eframe_1^{(\alpha)} T_1^{(\alpha\beta)} S_\beta T_2^{(\alpha\beta)} \eframe_2^{(\alpha)\trans}.
\end{aligned}
\end{equation*}
Since all sign matrices commute,
\begin{equation*}
    T_1^{(\alpha\beta)}S_\beta T_2^{(\alpha\beta)} = T_2^{(\alpha\beta)}S_\beta T_1^{(\alpha\beta)} = \pm S_\alpha. 
\end{equation*}
Therefore
\begin{equation*}
    Q^{(\beta)} = \pm \eframe_1^{(\alpha)} S_\alpha \eframe_2^{(\alpha)\trans} = \pm Q^{(\alpha)}.
    \qedhere
\end{equation*}
\end{proof}

Overlap compatibility can be propagated along paths in $K$: the signs of the local representatives can be chosen successively so that they glue together continuously. \Cref{lem:pathwise_projective_constancy} shows that this pathwise lift, combined with the local derivative bound, yields an almost-constancy estimate.

\begin{lemma}[Pathwise projective constancy]
\label{lem:pathwise_projective_constancy}
Let $K\subset\R^{\dof}$ be compact. Assume that there exist $\xx_\ast\in K$ and $L_K<\infty$ such that, for every $\xx\in K$, there is a rectifiable path $\gamma:[0,1]\to K$ with $\gamma(0)=\xx_\ast$ and $\gamma(1)=\xx$, of length at most $L_K$. Let $\{U_\alpha\}_{\alpha=1}^{N_{\rm cov}}$ be a finite open cover of $K$, and let $Q^{(\alpha)}:U_\alpha\to \ON$ be $C^1$ maps. Assume that the following two conditions hold.

\begin{enumerate}[label=(\roman*), leftmargin=*, widest=iii]
\item \emph{Overlap compatibility up to sign:} whenever
$U_\alpha\cap U_\beta\cap K\neq\emptyset$, there exists $\eta_{\alpha\beta}\in\{\pm1\}$ such that
$Q^{(\beta)} = \eta_{\alpha\beta}Q^{(\alpha)}$ on $U_\alpha\cap U_\beta\cap K$. 

\item \emph{Derivative bound:} for some $\Gamma_Q>0$, 
    $\max_{\alpha} \max_{1\le\mu\le\dof} \|\partial_{x_\mu}Q^{(\alpha)}\|_{C^0(U_\alpha\cap K)} \le \Gamma_Q.$
\end{enumerate}

Fix an index $\alpha_\ast$ such that $\xx_\ast\in U_{\alpha_\ast}$, and set $\QO:=Q^{(\alpha_\ast)}(\xx_\ast)$. Then, for every $\xx\in U_\alpha\cap K$,
$d_\pm\bigl(Q^{(\alpha)}(\xx),\QO\bigr) \le \sqrt{\dof}\,\Gamma_Q L_K$, 
where $d_\pm(Q_1,Q_2) := \min_{\eta\in\{\pm1\}}\|Q_1-\eta Q_2\|_F$. 
\end{lemma}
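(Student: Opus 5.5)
The plan is to work on the projective quotient: since overlaps only fix the local gauges up to sign, we track $Q^{(\alpha)}$ only up to $\pm$. Fix $\xx\in U_\alpha\cap K$ and take a rectifiable path $\gamma:[0,1]\to K$ from $\xx_\ast$ to $\xx$ of length at most $L_K$. After reparametrizing by arclength, $\gamma$ is Lipschitz with $\int_0^1|\dot\gamma(t)|\,dt\le L_K$, and it is differentiable almost everywhere. The first step is to build a continuous sign-consistent lift $\widehat Q:[0,1]\to\ON$ of the local gauges along $\gamma$.

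To construct the lift, use compactness of $\gamma([0,1])$ and a Lebesgue number for the cover $\{\gamma^{-1}(U_\beta)\}$ of $[0,1]$. This gives a partition $0=t_0<t_1<\cdots<t_J=1$ and indices $\beta_1,\ldots,\beta_J$ with $\gamma([t_{j-1},t_j])\subset U_{\beta_j}$. We take $\beta_1=\alpha_\ast$ and, if needed, append $\beta_{J+1}=\alpha$ at the endpoint. Consecutive pieces share the point $\gamma(t_j)\in U_{\beta_j}\cap U_{\beta_{j+1}}\cap K$, so hypothesis (i) gives $Q^{(\beta_{j+1})}=\eta_j Q^{(\beta_j)}$ there. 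Define signs inductively by $\epsilon_1=1$ and $\epsilon_{j+1}=\epsilon_j\eta_j$, and set $\widehat Q(t):=\epsilon_jQ^{(\beta_j)}(\gamma(t))$ on $[t_{j-1},t_j]$. This is well defined and continuous at every $t_j$, and $\widehat Q(0)=\QO$, while $\widehat Q(1)=\pm Q^{(\alpha)}(\xx)$ since both sides are local representatives at $\xx$ related by (i).

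Next, estimate the variation of the lift. On each piece, $\widehat Q\circ$ is a $C^1$ map composed with a Lipschitz path, so it is absolutely continuous. Almost everywhere its derivative is $\epsilon_j\sum_\mu\partial_{x_\mu}Q^{(\beta_j)}(\gamma(t))\,\dot\gamma_\mu(t)$. By hypothesis (ii) and Cauchy--Schwarz, its Frobenius norm is at most $\Gamma_Q\sum_\mu|\dot\gamma_\mu|\le\sqrt{\dof}\,\Gamma_Q|\dot\gamma(t)|$. Since $\widehat Q$ is continuous across the breakpoints, the pieces telescope, and integrating gives
\[
\|\widehat Q(1)-\widehat Q(0)\|_F\le\sqrt{\dof}\,\Gamma_QL_K .
\]
Then $d_\pm(Q^{(\alpha)}(\xx),\QO)\le\|\widehat Q(1)-\QO\|_F$, which yields the claimed bound.

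The main obstacle is the bookkeeping of signs and the regularity of the lift. The Lebesgue-number partition has to put each consecutive junction inside $K$, which holds because $\gamma\subset K$, so that (i) applies. The endpoint sign must also be absorbed by $d_\pm$. Everything else is a routine fundamental-theorem-of-calculus estimate along absolutely continuous curves.
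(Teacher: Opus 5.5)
Your proposal is correct and follows the paper's proof essentially step for step. It parameterizes $\gamma$ by arclength, takes a Lebesgue-number subdivision subordinate to the cover, and uses (i) at the junction points $\gamma(t_j)\in K$ to choose signs successively, building a continuous lift $\widehat Q$ with $\widehat Q(0)=\QO$. It then integrates the chain-rule bound $\|\tfrac{d}{dt}\widehat Q(t)\|_F\le\sqrt{\dof}\,\Gamma_Q|\gamma'(t)|$ over $[0,1]$. The only cosmetic difference is at the endpoint: the paper refines the subdivision so that the last chart is $U_\alpha$, while you invoke (i) once more at $\xx$ to get $\widehat Q(1)=\pm Q^{(\alpha)}(\xx)$, which is equally valid (and the same device would also cover your choice $\beta_1=\alpha_\ast$ without refining).
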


\begin{proof}
Fix $\xx\in U_\alpha\cap K$. Choose a rectifiable path $\gamma:[0,1]\to K$ with $\gamma(0)=\xx_\ast$ and $\gamma(1)=\xx$ of length at most $L_K$, parameterized proportionally to arclength. Then $\gamma$ is Lipschitz, hence differentiable almost everywhere, and $|\gamma'(t)| \leq L_K$.

Since $\gamma([0,1])$ is compact and $\{U_\alpha\}_{\alpha=1}^{N_{\rm cov}}$ is a finite open cover of $K$, the Lebesgue number lemma allows us to choose
a subdivision $0=t_0<t_1<\cdots<t_J=1$ and indices $\alpha_1,\ldots,\alpha_J$ such that
$\gamma([t_{j-1},t_j])\subset U_{\alpha_j}$ for $j=1,\ldots,J$. 
Refining the subdivision if necessary, we may assume that $\alpha_1=\alpha_\ast$ and $\alpha_J=\alpha$. 

We now choose signs along this path. Set
$\eta_1:=1$. Suppose that $\eta_j\in\{\pm1\}$ has already been chosen. Since
$\gamma(t_j)\in U_{\alpha_j}\cap U_{\alpha_{j+1}}\cap K,$
the overlap compatibility assumption~(i) gives
$Q^{(\alpha_{j+1})}(\gamma(t_j)) = \pm Q^{(\alpha_j)}(\gamma(t_j))$. 
Thus we can choose $\eta_{j+1}\in\{\pm1\}$ so that
$\eta_{j+1} Q^{(\alpha_{j+1})}(\gamma(t_j)) = \eta_j Q^{(\alpha_j)}(\gamma(t_j))$. 

Define a curve $\widehat Q:[0,1]\to \ON$ by
$\widehat Q(t) := \eta_j Q^{(\alpha_j)}(\gamma(t))$ for $t\in[t_{j-1},t_j]$.
By the choice of the signs $\eta_j$, the curve $\widehat Q$ is continuous at each switching point $t_j$. Hence $\widehat Q$ is continuous
on $[0,1]$. Moreover,
$\widehat Q(0)=\QO$ and $\widehat Q(1)=\eta_J Q^{(\alpha)}(\xx)$. 

On each subinterval $[t_{j-1},t_j]$, the chain rule gives, for almost every
$t$,
\begin{equation*}
    \frac{\dd}{\dd t}\widehat Q(t) = \eta_j DQ^{(\alpha_j)}(\gamma(t))\,\gamma'(t).
\end{equation*}
Using the coordinate derivative bound, we have
\begin{equation*}
    \left\| \frac{\dd}{\dd t}\widehat Q(t) \right\|_F \le \sqrt{\dof}\,\Gamma_Q\,|\gamma'(t)|
    \leq \sqrt{\dof}\,\Gamma_Q L_K \qquad \text{for a.e. $t$,}
\end{equation*}
and it follows readily that $\|\widehat Q(1)-\widehat Q(0)\|_F \leq \sqrt{\dof}\,\Gamma_Q L_K$. 
Since $\widehat Q(1)=\eta_J Q^{(\alpha)}(\xx)$,
we obtain 
\begin{equation*}
    d_\pm\bigl(Q^{(\alpha)}(\xx),\QO\bigr) \le \sqrt{\dof}\,\Gamma_Q L_K. 
    \qedhere
\end{equation*}
\end{proof}

It remains to convert the $d_\pm$-closeness of the local gauges to a fixed $\QO\in\ON$ into an estimate for $\dist_K(H_1,H_2)$; \Cref{lem:projective_to_global_distance} carries out this comparison.

\begin{lemma}[From local gauges to the global distance]
\label{lem:projective_to_global_distance}
Let $K\subset\R^{\dof}$, and let $H_i:K\to\SymHN$ for $i=1,2$. Assume that $\sup_{\xx\in K}\|H_1(\xx)\|_F\le B_H$. Let $\{U_\alpha\}_{\alpha=1}^{N_{\rm cov}}$ be a finite open cover of $K$. Suppose that on each $U_\alpha$, there are ordered orthonormal $C^1$ eigenframes $\eframe_i^{(\alpha)}$ such that 
\begin{equation*}
    H_i(\xx) = \eframe_i^{(\alpha)}(\xx) \Lambda_i(\xx) \eframe_i^{(\alpha)}(\xx)^\trans, \quad \xx\in U_\alpha\cap K, 
\end{equation*}
where
$\Lambda_i(\xx) := \diag(\lambda_1(\xx;H_i),\ldots,\lambda_\size(\xx;H_i))$. 
Let $S_\alpha\in\constS$, and define
$Q^{(\alpha)}(\xx) := \eframe_1^{(\alpha)}(\xx) S_\alpha \eframe_2^{(\alpha)}(\xx)^\trans$. 
Assume that, for some $\varepsilon\ge0$,
$\max_{\xx\in K}\max_{1\le n\le\size} |\lambda_n(\xx;H_2)-\lambda_n(\xx;H_1)| \le \varepsilon$, and that, for some $\QO\in \ON$ and $\varepsilon_Q\ge0$,
$d_\pm\bigl(Q^{(\alpha)}(\xx),\QO\bigr) \le \varepsilon_Q$ for $\xx\in U_\alpha\cap K$, with $d_\pm$ as in \Cref{lem:pathwise_projective_constancy}.
Then
\begin{equation*}
    \sup_{\xx\in K} \left\| H_2(\xx)-\QO^\trans H_1(\xx)\QO \right\|_F \le \sqrt{\size}\,\varepsilon + 2B_H\varepsilon_Q. 
\end{equation*}
In particular,
$\dist_K(H_1,H_2) \le \sqrt{\size}\,\varepsilon + 2B_H\varepsilon_Q$. 
\end{lemma}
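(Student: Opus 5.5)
Fix $\xx\in K$ and an index $\alpha$ with $\xx\in U_\alpha$. The plan is to split the pointwise error into a spectral part and a gauge part, using the local factorization $H_i(\xx)=\eframe_i^{(\alpha)}(\xx)\Lambda_i(\xx)\eframe_i^{(\alpha)}(\xx)^\trans$. The first step is to observe that the local gauge $Q^{(\alpha)}(\xx)=\eframe_1^{(\alpha)}S_\alpha\eframe_2^{(\alpha)\trans}$ is orthogonal. Since $S_\alpha$ is diagonal, it commutes with $\Lambda_1$. Hence
\begin{equation*}
    Q^{(\alpha)}(\xx)^\trans H_1(\xx)\,Q^{(\alpha)}(\xx)
    = \eframe_2^{(\alpha)}(\xx)\,S_\alpha\Lambda_1(\xx)S_\alpha\,\eframe_2^{(\alpha)}(\xx)^\trans
    = \eframe_2^{(\alpha)}(\xx)\,\Lambda_1(\xx)\,\eframe_2^{(\alpha)}(\xx)^\trans .
\end{equation*}
Subtracting this from $H_2(\xx)$ leaves $\eframe_2^{(\alpha)}(\Lambda_2-\Lambda_1)\eframe_2^{(\alpha)\trans}$. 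Because the Frobenius norm is orthogonally invariant, this term has norm $\|\Lambda_2-\Lambda_1\|_F\le\sqrt{\size}\,\varepsilon$ by the eigenvalue hypothesis.

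The second step handles the gauge part. Write $Q:=Q^{(\alpha)}(\xx)$ and choose $\eta\in\{\pm1\}$ attaining $d_\pm(Q,\QO)$. Conjugation is insensitive to the sign, so $\QO^\trans H_1\QO=(\eta\QO)^\trans H_1(\eta\QO)$, and we may compare $Q^\trans H_1 Q$ with $P^\trans H_1 P$ for $P:=\eta\QO$, where $\|Q-P\|_F\le\varepsilon_Q$. We telescope:
\begin{equation*}
    Q^\trans H_1 Q - P^\trans H_1 P = (Q-P)^\trans H_1 Q + P^\trans H_1 (Q-P).
\end{equation*}
To bound each term we use $\|XY\|_F\le\|X\|_2\|Y\|_F$ together with the fact that orthogonal factors have operator norm one. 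This gives
\begin{equation*}
    \|(Q-P)^\trans H_1 Q\|_F \le \|Q-P\|_F\,\|H_1\|_2, \qquad \|H_1\|_2\le\|H_1\|_F\le B_H,
\end{equation*}
and the same bound for the other term. Summing, the gauge part is at most $2B_H\varepsilon_Q$.

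The third step combines the two parts with the triangle inequality, giving $\|H_2(\xx)-\QO^\trans H_1(\xx)\QO\|_F\le\sqrt{\size}\,\varepsilon+2B_H\varepsilon_Q$. Since the cover $\{U_\alpha\}$ covers $K$, every $\xx\in K$ lies in some $U_\alpha$, so the bound holds uniformly on $K$. Taking the supremum over $\xx$ and then the infimum over the gauge in the definition of $\dist_K$ yields the final claim.

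I expect no real obstacle here. The only points needing care are that $S_\alpha$ commutes with $\Lambda_1$, which is what makes $Q^{(\alpha)}$ transport $H_1$ into $H_2$'s eigenframe exactly, and that the sign ambiguity $\eta$ in $d_\pm$ is harmless under conjugation.
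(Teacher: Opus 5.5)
Your proof is correct and follows the paper's argument essentially step for step. Both use the exact identity $Q^{(\alpha)\trans}H_1Q^{(\alpha)}=\eframe_2^{(\alpha)}\Lambda_1\eframe_2^{(\alpha)\trans}$ for the spectral part, absorb the sign $\eta$ under conjugation and telescope to get $2B_H\varepsilon_Q$ for the gauge part, and combine the two with the triangle inequality. The differences are cosmetic: you order the telescoping terms differently and pass through the operator norm $\|H_1\|_2\le\|H_1\|_F$, which yields the same bound.
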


\begin{proof}
Fix $\xx\in K$, and choose $\alpha$ such that $\xx\in U_\alpha$. By the definition of $Q^{(\alpha)}$,
\begin{equation*}
    \begin{aligned}
    (Q^{(\alpha)}(\xx))^\trans H_1(\xx)Q^{(\alpha)}(\xx)
    &= \eframe_2^{(\alpha)}(\xx)S_\alpha\eframe_1^{(\alpha)}(\xx)^\trans \bigl(\eframe_1^{(\alpha)}(\xx)\Lambda_1(\xx)\eframe_1^{(\alpha)}(\xx)^\trans\bigr) \eframe_1^{(\alpha)}(\xx)S_\alpha\eframe_2^{(\alpha)}(\xx)^\trans  \\
    &= \eframe_2^{(\alpha)}(\xx)S_\alpha\Lambda_1(\xx)S_\alpha\eframe_2^{(\alpha)}(\xx)^\trans.
\end{aligned}
\end{equation*}
Since $S_\alpha$ and $\Lambda_1(\xx)$ are diagonal and $S_\alpha^2=I$, we have $S_\alpha\Lambda_1(\xx)S_\alpha=\Lambda_1(\xx)$. 
Hence
\begin{equation*}
    (Q^{(\alpha)}(\xx))^\trans H_1(\xx)Q^{(\alpha)}(\xx) = \eframe_2^{(\alpha)}(\xx)\Lambda_1(\xx)\eframe_2^{(\alpha)}(\xx)^\trans.
\end{equation*}
On the other hand, 
\begin{equation*}
    H_2(\xx)=\eframe_2^{(\alpha)}(\xx)\Lambda_2(\xx)\eframe_2^{(\alpha)}(\xx)^\trans.
\end{equation*}
By orthogonal invariance of the Frobenius norm,
\begin{equation*}
    \begin{aligned}
    \|H_2(\xx)-(Q^{(\alpha)}(\xx))^\trans H_1(\xx)Q^{(\alpha)}(\xx)\|_F
    &= \|\eframe_2^{(\alpha)}(\xx)(\Lambda_2(\xx)-\Lambda_1(\xx))\eframe_2^{(\alpha)}(\xx)^\trans\|_F  \\
    &= \|\Lambda_2(\xx)-\Lambda_1(\xx)\|_F 
    \le \sqrt{\size}\,\varepsilon.
\end{aligned}
\end{equation*}

Next, by the assumed bound on $d_\pm$, there exists
$\eta(\xx)\in\{\pm1\}$ such that
\begin{equation*}
    \|Q^{(\alpha)}(\xx)-\eta(\xx)\QO\|_F\le \varepsilon_Q. 
\end{equation*}
Since a global sign cancels under conjugation,
\begin{equation*}
    (\eta(\xx)\QO)^\trans H_1(\xx)(\eta(\xx)\QO) = \QO^\trans H_1(\xx)\QO.
\end{equation*}
We estimate
\begin{equation*}
    \begin{aligned}
&\left\| (Q^{(\alpha)}(\xx))^\trans H_1(\xx)Q^{(\alpha)}(\xx) - (\eta(\xx)\QO)^\trans H_1(\xx)(\eta(\xx)\QO) \right\|_F  \\
= & \left\| (Q^{(\alpha)}(\xx))^\trans H_1(\xx)(Q^{(\alpha)}(\xx)-\eta(\xx)\QO) + (Q^{(\alpha)}(\xx)-\eta(\xx)\QO)^\trans H_1(\xx)(\eta(\xx)\QO) \right\|_F  \\
\le & \|(Q^{(\alpha)}(\xx))^\trans H_1(\xx)(Q^{(\alpha)}(\xx)-\eta(\xx)\QO)\|_F + \|(Q^{(\alpha)}(\xx)-\eta(\xx)\QO)^\trans H_1(\xx)(\eta(\xx)\QO)\|_F  \\
\le & 2\|H_1(\xx)\|_F\|Q^{(\alpha)}(\xx)-\eta(\xx)\QO\|_F  \\
\le & 2B_H\varepsilon_Q.
\end{aligned}
\end{equation*}
Combining the two bounds gives
\begin{equation*}
    \begin{aligned}
\left\| H_2(\xx)-\QO^\trans H_1(\xx)\QO \right\|_F
&\le \|H_2(\xx)-(Q^{(\alpha)}(\xx))^\trans H_1(\xx)Q^{(\alpha)}(\xx)\|_F  \\
&\quad+ \|(Q^{(\alpha)}(\xx))^\trans H_1(\xx)Q^{(\alpha)}(\xx)-\QO^\trans H_1(\xx)\QO\|_F  \\
&\le \sqrt{\size}\,\varepsilon + 2B_H\varepsilon_Q.
\end{aligned}
\end{equation*}
Taking the supremum over $\xx\in K$ proves the first estimate. Since $\QO\in \ON$
is admissible in the infimum defining $\dist_K$, the second estimate follows.
\end{proof}

We now assemble the preceding lemmas into a proof of \Cref{thm:full_stability}.

\begin{proof}[Proof of \Cref{thm:full_stability}]
Set $\delta:=\Delta_K(H_1,H_2)$. If $\delta=0$, the conclusion follows from \Cref{thm:complete_invariant_new}. 
Hence we assume $\delta>0$. The threshold $\delta_0>0$ of the statement is fixed below through finitely many smallness conditions.

Choose $\delta_0\le g_K/4$. Then \Cref{lem:stab_H2_nondeg} implies that
$H_2$ is non-degenerate on $K$, with spectral gap at least $g_K/2$. Thus,
near every point of $K$, both $H_1$ and $H_2$ admit ordered $C^1$
eigenframes and continuous local derivative couplings.

Fix $\xx_0\in K$, and freeze the pointwise tree and active labels $(\tree(\xx_0),\ell(\xx_0))$. 
At $\xx_0$, the frozen choice agrees with the pointwise choice in
$\Delta_K$, so $\|\Delta\map(\xx_0,\xx_0)\|_\infty\le\delta$. 
Moreover, the first active lower bound required by \Cref{lem:stab_freezing}
holds by the definition of $\kappa_K$ in \eqref{eq:kappa_dir_lower_bound}, and
the second follows from the pure two-loop entries of $\Delta\map(\xx_0,\xx_0)$. 
Choosing $\delta_0\le 7\kappa_K^2/16$, we apply
\Cref{lem:stab_freezing} and obtain, for every $\xx_0\in K$, an open ball
$U_{\xx_0}$ such that
\begin{equation*}
    \|\Delta\map(\cdot,\xx_0)\|_{C^0(U_{\xx_0}\cap K)} \le 2\delta,
\end{equation*}
and
\begin{equation*}
    |\tilde{\calA}_e^{\ell_e(\xx_0)}(\xx,\xx_0;H_1)| \ge \frac{\kappa_K}{2}, \qquad |\tilde{\calA}_e^{\ell_e(\xx_0)}(\xx,\xx_0;H_2)| \ge \frac{\kappa_K}{4}
\end{equation*}
for all $\xx\in U_{\xx_0}$ and all $e\in\edge(\tree(\xx_0))$.

By compactness, choose a finite subcover $K\subset\bigcup_{\alpha=1}^{N_{\rm cov}}U_\alpha$. 
For the centre $\xx_\alpha$ of $U_\alpha$, write $\tree_\alpha:=\tree(\xx_\alpha)$, $\ell_{\alpha,e}:=\ell_e(\xx_\alpha)$, $\eframe_i^{(\alpha)}(\xx):=\eframe_i(\xx,\xx_\alpha)$, and $\tilde{\calA}_i^{(\alpha)\mu}(\xx):=\tilde{\calA}^{\mu}(\xx,\xx_\alpha;H_i)$.
On $U_\alpha$, set $A_i^\mu(\xx) := \tilde{\calA}_i^{(\alpha)\mu}(\xx)$.
The hypotheses of \Cref{lem:stab_local_reconstruction} are satisfied with $U=U_\alpha$, $V=U_\alpha\cap K$, $\varepsilon=2\delta$, $\kappa=\frac{\kappa_K}{4}$, 
provided $\delta_0\le 1/2$. Hence, for each $\alpha$, there exists
$S_\alpha\in\constS$ such that
\begin{equation*}
    \max_{1\le\mu\le\dof} \left\| \tilde{\calA}_2^{(\alpha)\mu} - S_\alpha \tilde{\calA}_1^{(\alpha)\mu} S_\alpha \right\|_{C^0(U_\alpha\cap K)} \le C\delta.
\end{equation*}

Define the local orthogonal gauges $Q^{(\alpha)}(\xx) := \eframe_1^{(\alpha)}(\xx) S_\alpha \eframe_2^{(\alpha)}(\xx)^\trans$. 
On any nonempty overlap
$V_{\alpha\beta}:=U_\alpha\cap U_\beta\cap K$, 
the preceding estimate holds for both $\alpha$ and $\beta$. Moreover, the
active lower bound on $U_\alpha$ gives
$|(\tilde{\calA}_1^{(\alpha)\ell_{\alpha,e}})_e(\xx)| \ge \frac{\kappa_K}{2}$.
Choosing $\delta_0$ smaller if necessary so that $C\delta_0<\kappa_K/2$,
we may apply \Cref{lem:stab_overlap_compatibility}. Therefore
\begin{equation*}
    Q^{(\beta)}=\pm Q^{(\alpha)} \qquad \text{on }U_\alpha\cap U_\beta.
\end{equation*}

Next, differentiating
$Q^{(\alpha)} = \eframe_1^{(\alpha)}S_\alpha\eframe_2^{(\alpha)\trans}$
and using the estimate above gives
\begin{equation*}
    \max_\alpha\max_{1\le\mu\le\dof} \|\partial_{x_\mu}Q^{(\alpha)}\|_{C^0(U_\alpha\cap K)} \le C\delta.
\end{equation*}
Together with the overlap compatibility, \Cref{lem:pathwise_projective_constancy}
applies with
$\Gamma_Q=C\delta$. 
Thus there exists $\QO\in\ON$ such that, for every
$\xx\in U_\alpha\cap K$,
\begin{equation*}
    d_\pm\bigl(Q^{(\alpha)}(\xx),\QO\bigr) \le C L_K\delta.
\end{equation*}

Finally, the spectral component of $\map_{\tree,\ell}$ gives
\begin{equation*}
    \max_{\xx\in K}\max_{1\le n\le\size} |\lambda_n(\xx;H_2)-\lambda_n(\xx;H_1)| \le \delta.
\end{equation*}
Applying \Cref{lem:projective_to_global_distance} with
$\varepsilon=\delta$, $\varepsilon_Q=C L_K\delta$, $B_H=\sup_{\xx\in K}\|H_1(\xx)\|_F\le B_K$, we obtain
\begin{equation*}
    \sup_{\xx\in K} \|H_2(\xx)-\QO^\trans H_1(\xx)\QO\|_F \le C\delta.
\end{equation*}
Since $\QO\in\ON$, this gives
\begin{equation*}
\dist_K(H_1,H_2) \le C\delta = C\Delta_K(H_1,H_2).
\qedhere
\end{equation*}
\end{proof}

\bibliography{refs} 

\end{document}